\definecolor{labelkey}{gray}{.8}
\definecolor{refkey}{gray}{.8}
\definecolor{darkred}{rgb}{0.9,0.1,0.1}
\definecolor{darkgreen}{rgb}{0,0.5,0}
\newtheorem{theorem}{Theorem}[section]
\newtheorem{lemma}[theorem]{Lemma}
\newtheorem{corollary}[theorem]{Corollary}
\newtheorem{proposition}[theorem]{Proposition}
\theoremstyle{remark}
\newtheorem{remark}[theorem]{Remark}
\renewenvironment{proof}[1][Proof]{ {\itshape \noindent {#1.}} }{$\Box$
\medskip}
\numberwithin{equation}{section}
\newcommand{\cal}{\mathcal}
\newcommand{\R}{\mathbb{R}}
\newcommand{\bbR}{\mathbb{R}}
\newcommand{\bbP}{\mathbb{P}}
\newcommand{\E}{\mathbb{E}}
\newcommand{\bbE}{\mathbb{E}}
\newcommand{\M}{\mathbf{M}}
\newcommand{\cN}{\mathcal{N}}
\newcommand{\eps}{\varepsilon}
\newcommand{\ga}{\gamma}
\def\les{\lesssim}
\newcommand{\la}{\langle}
\newcommand{\ra}{\rangle}
\newcommand{\sw}{\mathsf{w}}
\newcommand{\sU}{\mathsf{U}}
\newcommand{\su}{\mathsf{u}}
\newcommand{\si}{\sigma}
\newcommand{\lam}{\lambda}
\newcommand{\bfn}{\mathbf{n}}
\newcommand{\bfm}{\mathbf{m}}
\newcommand{\be}{\begin{equation}}
\newcommand{\ee}{\end{equation}}
\begin{document}

\title{Gaussian fluctuations from random Schr\"odinger equation}
\author{Yu Gu, Tomasz Komorowski}

\address[Yu Gu]{Department of Mathematics, Carnegie Mellon University, Pittsburgh, PA 15213}

\address[Tomasz Komorowski]{Institute of Mathematics, Polish Academy Of Sciences, ul. \'Sniadeckich 8, 00-636 Warsaw, Poland}

\maketitle

\begin{abstract}
We study the Schr\"odinger equation driven by a weak Brownian forcing, and derive Gaussian fluctuations in the form of a time-inhomogeneous Ornstein-Uhlenbeck process. As a result, when evaluated at a fixed frequency, the intensity of the incoherent wave is of exponential distribution. %as predicted in the physics literature.
\end{abstract}
\maketitle

\section{Introduction}

Wave propagation in random media is a complex phenomenon due to the existence of multiple scales, including the propagation distance, the correlation length and the strength of the media, the wave length, etc. Ideally, one would like to know about the statistical properties of the wave field, in particular its moments information that is needed in practice. This depends in a highly nonlinear way on the statistical properties of the media, and it is usually impossible to resolve all the scales and study the wave equation directly. Thus, in most approaches, effective and simplified models are derived which only involve a few parameters related to the media. There is a large body of literature on the subject and various approximations have been proposed in different asymptotic regimes, see \cite{bkr1,fgps,garnier} and the references therein.

In a high frequency regime, the backscattering of the wave is
  neglected and the forward approximation in a privileged direction
  leads to a Schr\"odinger equation with a random potential. This approach is
used e.g.  to describe the propagation of a
wave beam in a turbulent medium in the forward scattering
approximation of the full wave equation, see \cite{strob}. The
refraction index then plays the role of a potential.

A direct
  study of the random Schr\"odinger equation is still a challenging
  task, with part of the reason being that the moments of the solution do
  not solve closed-form equations, and this makes it hard to extract
  statistical information on the wave field, see e.g. \cite[Chapter 5,6]{bkr1} and
  the references therein.

One can further perform a
  Markovian approximation of the randomness and assume it is
  $\delta-$correlated in the privileged direction. It leads to the
  so-called It\^o-Schr\"odinger model. This model appears e.g. as a diffusive approximation for linear acoustic
waves propagating in $1 + d$ spatial dimensions in a random medium,
when the correlation length of the medium and the  typical
wavelength is much smaller than the propagation distance, see
\cite{GaS}.
Using It\^o calculus one
  can show in this particular case
  that the moments of  the wave function solve closed-form equations.
 %The forward and Markovian approximation has been justified in \cite{}, and the It\^o-Schr\"odinger model is the subject of our paper.

For the It\^o-Schr\"odinger model, the first and second moment equations are straightforward to solve, with the explicit solutions available and corresponding to the ballistic and the scattering component of the wave field respectively. Another important quantity is the fourth moment, as it is related to fluctuations of the intensity of the wave field. The corresponding moment equation is more complicated and can not be solved explicitly. Various approximations were derived from both theoretical and numerical points of view. 

In applications such as light
passing through a turbulent atmosphere or sound waves propagating in
the ocean, it is a well-accepted fact that the distribution of the
complex wave field becomes approximately a complex Gaussian, that is,
the real and imaginary parts are independent Gaussians with the same
variance, and as a consequence, the intensity of the wave field
(given by the square of its absolute value) is of exponential distribution \cite{valley,yaku}. This %is sometimes referred as the scintillation conjecture and 
has been proved in $d=1$ in a randomly layered medium \cite[Chapter 9]{fgps}. Progress has
also been made in high dimensions, focusing on estimating of the fourth
moment to verify the Gaussian summation rule, see \cite{gs1,gs2}.

In the present paper,  we focus on the It\^o-Schr\"odinger model, and
our main contribution is to prove Gaussianity of the wave field in an asymptotic regime
where the medium has a weak strength and the propagation distance is
large. More precisely, we consider the asymptotics of the compensated wave field, see \eqref{comp:wave} below.  This object has been introduced in
\cite{bkr}. It is a field, in both time and momentum variables, that is obtained from
the Fourier transform of the solution of a random Schr\"odinger equation by removing the fast
oscillations of its phase. This is done by  ``recentering''  the
phase through solving the 
free Schr\"{o}dinger equation backward (with no potential). We prove, see
Theorem \ref{t.mainth}, that asymptotically the compensated
wave field converges in law to a complex Gaussian field that is the
solution of an time-inhomogenous Ornstein-Uhlenbeck equation, see
\eqref{e.limiteq} and Section \ref{sec1.2.1} below.

Concerning the method of our proof, we use a martingale representation
for the compensated
wave field, see \eqref{012305-20} and \eqref{e.inteeq}  below. The limit is then verified by
proving the convergence of the respective martingale field appearing in
\eqref{e.inteeq}. This is achieved by  proving the convergence of its
quadratic variation, which constitutes the main thrust of our argument. On the other hand, as the moments of the wave field solve deterministic equations, one can in principle analyze those equations and try to   establish the Gaussian limit in a more analytic way. This perspective has actually been adopted in \cite{gs1,gs2} and many previous works. A message we want to convey here is that, the convergence of the martingale quadratic variation only involves a fourth moment calculation, which simplifies the analysis a bit.

The paper is organized as follows: in Section \ref{sec22} we formulate
our main result.
Some of its aspects are discussed in Section \ref{sec33}. The proof of
the main result is carried out in Sections \ref{sec2}, \ref{s.c} and \ref{sec3}.

\subsection*{Acknowledgements}
We would like to thank two referees for useful comments that help improve the presentation of the paper. Y.G. was partially supported by the NSF through DMS-1907928
and the Center for Nonlinear Analysis of CMU. T.K. acknowledges the support of NCN grant UMO2016/23/B/ST1/00492.

\section{Main result}

\label{sec22}

The equation we study takes the form 
\begin{equation}\label{e.maineq}
\begin{aligned}
&i\partial_t\phi+\frac12\Delta\phi-\eps V(t,x)\circ\phi=0, \quad\quad (t,x)\in\R_+\times\R^d,\\
&\phi(0,x)=\phi_0(x).
\end{aligned}
\end{equation}
Here $\R_+=(0,+\infty)$, the random potential $V(t,x)$ is a real distribution-valued, Gaussian process over some probability space $(\Omega,{\cal V},\bbP)$ that is white in time and smooth in space, with the covariance function 
\[
\E[V(t,x)V(s,y)]=\delta(t-s) R(x-y), \quad (t,x),(s,y)\in\bbR\times\bbR^d.
\]
The parameter $\eps>0$ regulates the strength of the field and
  $\bbE$ is the expectation with respect to $\bbP$. We assume that $R(\cdot)$ belongs to the Schwarz class $\mathcal{S}(\R^d)$.
The
product $\circ$ between the solution and the noise  is in the
Stratonovitch sense. Equation \eqref{e.maineq} is understood via the
corresponding It\^o stochastic partial differential equation
\begin{equation}\label{e.eqito}
d\phi=\left(\frac i2\Delta\phi-\frac{\eps^2}{2}R(0)\phi\right)dt-i\eps B(dt)\phi,
\end{equation}
where $\left(B(t)\right)_{t\ge0}$ is a smooth in space Wiener
process such that $\dot B(t,x)=V(t,x)$, i.e. it is a Gaussian random field with the covariance function
\begin{equation}
\label{B-cor}
\E[B(t,x)B(s,y)]=(t\wedge s) R(x-y),\quad (t,x),(s,y)\in\bar\bbR_+\times\bbR^d. 
\end{equation}
The equation was analyzed in the early work \cite{dg}, and it was shown that the $L^2$ norm of the wave
  function 
is conserved \cite[Equation (2.19)]{dg}, i.e.
$$
\|\phi(t,\cdot)\|_{L^2(\bbR^d)}=\|\phi_0 \|_{L^2(\bbR^d)},\quad
t\ge0,\quad \bbP\quad\mbox{a.s.}
$$
We shall denote by $\|\cdot\|_{L^p(\bbR^d)}$ the $L^p$ norm with
respect to the Lebesgue measure over $\bbR^d$.
The Fourier transform of a given function $f\in L^2(\bbR^d)$ shall be denoted by
$\hat{f}(\xi):=\int_{\bbR^d} f(x)e^{-i\xi\cdot x}dx$, $\xi\in\bbR^d$.

Before stating our main result, let us introduce some definitions.
Let $\sw (t,\cdot,\xi)$ be the finite measure-valued
solution of  the linear kinetic equation
\begin{equation}
\label{020405-20}
\begin{aligned}
&\partial_t \sw(t,\cdot,\xi)+\xi\cdot \nabla_x \sw(t,\cdot,\xi)=\int_{\R^d}\frac{\hat{R}(p)}{(2\pi)^d} [\sw(t, \cdot,\xi-p)-\sw(t, \cdot,\xi)]dp,\\
&\sw(0,dx,\xi)=|\hat{\phi}_0(\xi)|^2 \delta_0(dx).
\end{aligned}
\end{equation}
Here $\delta_0$ is the Dirac measure at $0$. Define the measure
\begin{equation}
\label{010405-20a}
\su(t,dx,\xi):=\int_{\R^d}\sw(t,dx,\xi-p) \frac{\hat{R}(p) dp}{(2\pi)^d} .
\end{equation}
Assume throughout the paper that $\hat{\phi}_0\in C_b(\R^d)\cap
L^2(\bbR^d)\cap \mathrm{Lip}(\R^d)$, where  $C_b(\R^d)$ and
$\mathrm{Lip}(\R^d)$ denote the spaces of bounded and continuous, and
Lipschitz continuous functions, respectively. The following simple fact holds.
\begin{proposition}
\label{prop010605-20}
For each $(t,\xi)\in  \bbR_+\times \bbR^d$
the measure $\su(t,\cdot,\xi)$ is absolutely continuous  with respect to the Lebesgue
measure.  Its density $\sU(t,\cdot,\xi)$ is 
 strictly positive and smooth.
\end{proposition}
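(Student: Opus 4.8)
The plan is to recast the kinetic equation \eqref{020405-20} in mild form and to show that one scattering event regularizes the singular initial datum. Since $\frac1{(2\pi)^d}\int_{\R^d}\hat R(p)\,dp=R(0)$, equation \eqref{020405-20} reads $\partial_t\sw+\xi\cdot\nabla_x\sw+R(0)\sw=\su(t,\cdot,\xi)$, so integrating the transport part along characteristics gives
\[
\sw(t,dx,\xi)=e^{-R(0)t}|\hat\phi_0(\xi)|^2\,\delta_{t\xi}(dx)+\int_0^t e^{-R(0)(t-s)}\,\tau_{(t-s)\xi}\su(s,dx,\xi)\,ds,
\]
where $\tau_a$ is the push-forward of a measure in $x$ under $y\mapsto y+a$. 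Convolving both sides in $\xi$ against $\hat R/(2\pi)^d$ (i.e., applying \eqref{010405-20a}) and using the substitution $x=t(\xi-p)$ (Jacobian $t^{-d}$, which is where $t>0$ enters), the first term turns into the absolutely continuous measure
\[
A_t(dx,\xi):=\frac{e^{-R(0)t}}{(2\pi)^d\,t^{d}}\,\hat R\Big(\xi-\tfrac{x}{t}\Big)\,\Big|\hat\phi_0\Big(\tfrac{x}{t}\Big)\Big|^2\,dx ,
\]
whose density is smooth — $\hat R$ is Schwartz and $\hat\phi_0$ is smooth under the standing assumptions on $\phi_0$ — and positive wherever $\hat R(\xi-x/t)\,|\hat\phi_0(x/t)|^2>0$. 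Hence $\su$ obeys the closed renewal equation
\[
\su(t,dx,\xi)=A_t(dx,\xi)+\int_0^t e^{-R(0)(t-s)}\int_{\R^d}\frac{\hat R(p)}{(2\pi)^d}\,\tau_{(t-s)(\xi-p)}\su(s,dx,\xi-p)\,dp\,ds .
\]

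Next I bootstrap regularity through this identity. Iterating from $\su_0:=A$ produces non-negative measures $\su_k$ increasing to $\su$ (the partial sums of the Dyson series), and translations, convolution against the non-negative Schwartz kernel $\hat R/(2\pi)^d$, and integration in $s$ all preserve absolute continuity in $x$ and — by differentiation under the integral — smoothness; thus each $\su_k(t,\cdot,\xi)$ is absolutely continuous with a smooth density $\ge$ that of $A_t$. Convergence is quantitative: the $n$-fold iterate of the collision operator has total mass at most $e^{-R(0)t}(R(0)t)^n/n!\cdot(2\pi)^d\|\phi_0\|_{L^2}^2$ (the $1/n!$ coming from the time simplex), and analogous estimates bound all $x$- and $\xi$-derivatives (each differentiation costs a derivative of $\hat R$ or of $\hat\phi_0$, or a negative power of $s$ that is absorbed by their rapid decay), so the partial sums of the densities and of all their derivatives converge uniformly on compact sets. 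This yields that $\su(t,\cdot,\xi)$ is absolutely continuous with a smooth density $\sU(t,\cdot,\xi)$, and strict positivity follows from the positivity of $\hat R$ and $\phi_0\not\equiv0$ — indeed the one-scattering contribution alone is already $>0$ at every $(x,\xi)$.

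The main obstacle is the regularity bookkeeping in the bootstrap: near $s=0$ the rescaling $y\mapsto(y-(t-s)\xi)/s$ degenerates, so one must check that the $s^{-d}$ Jacobian, multiplied by a rapidly decreasing function of the rescaled variable, leaves an $s$-integrand — together with all its $x$- and $\xi$-derivatives — that is finite off the measure-zero set $x=t\xi$ (where no rescaling occurs); this is exactly where the Schwartz decay of $\hat R$ and the fast decay of $\hat\phi_0$ are used. A secondary, routine point is justifying the Fubini interchanges that move the $\xi$-convolution inside the $x$-integral when identifying densities, which is immediate since all quantities are non-negative. Both absolute continuity and strict positivity are also transparent from the probabilistic reading of \eqref{020405-20}: up to the constant $(2\pi)^d\|\phi_0\|_{L^2}^2$, $\sw(t,dx,d\xi)$ is the law of $(X_t,\Xi_t)$, where $\Xi$ is a velocity-jump process with jump rate $R(0)$ and increment density $\hat R/((2\pi)^dR(0))$ started from $|\hat\phi_0(\xi)|^2\,d\xi$, and $X_t=\int_0^t\Xi_s\,ds$; the first jump already furnishes a density in $x$ through the Jacobian above, and everywhere-positivity of $\hat R$ lets a one-jump path reach any prescribed $(x,\xi)$.
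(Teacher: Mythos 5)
Your strategy is essentially the paper's: represent $\sw$ by its Duhamel series of transported Dirac masses and observe that the convolution against $\hat R$ in the momentum variable, via the change of variables with Jacobian $\tau^{-d}$, turns each such mass into an absolutely continuous measure. Writing this as a closed renewal equation for $\su$ and iterating reproduces, term by term, the series the paper writes down directly in \eqref{e.cov3:a}, so the absolute-continuity part of your argument is sound and coincides with the paper's.

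Two of your supporting claims, however, are not justified by the standing hypotheses. First, you assert that $\hat\phi_0$ is smooth; the paper only assumes $\hat{\phi}_0\in C_b(\R^d)\cap L^2(\bbR^d)\cap \mathrm{Lip}(\R^d)$. Your (correctly computed) density for $A_t$ is proportional to $\hat R\big(\xi-\tfrac{x}{t}\big)\,\big|\hat\phi_0\big(\tfrac{x}{t}\big)\big|^2$, which under these hypotheses is only Lipschitz in $x$, so the smoothness of $\sU$ cannot be read off this term; it has to be extracted from the $n\ge 1$ terms of the series, where the $x$-dependence sits entirely inside $\hat R$ and the argument of $\hat\phi_0$ is an integrated momentum variable. (The paper's own display elides the same point by writing $|\hat\phi_0(\xi)|^2$ in the zeroth term where $|\hat\phi_0(x/t)|^2$ should appear, so you are in good company, but your bootstrap as written leans on a false premise.) Second, your closing claim that the one-scattering contribution alone is already strictly positive at every $(x,\xi)$ contradicts your own earlier, correct, hedge: $A_t$ vanishes wherever $\hat\phi_0(x/t)=0$, and $\hat\phi_0$ is not assumed zero-free. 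Strict positivity, like smoothness, should be deduced from the higher-order terms (using everywhere-positivity of $\hat R$ and $\hat\phi_0\not\equiv 0$), not from the first one.
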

The proof of the proposition is presented in Section \ref{sec2}.

Next we define the {\em compensated wave field}
\begin{equation}
\label{comp:wave}
X_\xi^{\eps}(t,\eta):=\hat{\phi}\left(\frac{t}{\eps^2},\xi+\eps^2
\eta\right)\exp\left\{\frac{it}{2\eps^2}|\xi+\eps^2\eta|^{2}\right\},\quad (t,\eta)\in \bar\bbR_+\times\bbR^d,
\end{equation}
see Remark \ref{rm1.3} below for a discussion on the interpretation of the
field.

Our main results concerns the long time, large scale behavior of the
Fourier transform $\hat\phi(t,\xi)$ of the wave function and can be
stated as follows. 
\begin{theorem}\label{t.mainth}
Fix
$\xi\in\R^d$. The following convergence holds:
\[
\left\{X_\xi^{\eps}(t,\eta)\right\}_{(t,\eta)\in\bar\bbR_+\times\bbR^d}
\Rightarrow
\{X_\xi(t,\eta)\}_{(t,\eta)\in\bar\bbR_+\times\bbR^d},\quad \mbox{as }\eps\to0,
\]  
in law  over $C(\bar\bbR_+\times \R^d)$. The limit $X_\xi$ is a
complex valued Gaussian process admitting the representation
\begin{equation}\label{e.limiteq}
X_\xi(t,\eta)=\hat{\phi}_0(\xi)e^{-\frac12R(0)t}+\int_0^t e^{-\frac12R(0)(t-s)}B_\xi(ds,\eta),
%dX_\xi(t,\eta)=-\tfrac12R(0)X_\xi(t,\eta)dt+dB_\xi(t,\eta),\quad\quad X_\xi(0,\eta)=\hat{\phi}_0(\xi),
\end{equation}
where $B_\xi$ is a zero mean complex Gaussian process with the covariance function 
\begin{equation}
\label{010405-20}
\begin{aligned}
&\E[B_\xi(t_1,\eta_1)B_\xi^*(t_2,\eta_2)]=\int_0^{t_1\wedge t_2}\hat{\sU}(s,\eta_1-\eta_2,\xi)ds,\\
&\E[B_\xi(t_1,\eta_1)B_\xi(t_2,\eta_2)]=0,\quad (t_j,\eta_j)\in\bar\bbR_+\times\bbR^d,\,j=1,2.
\end{aligned}
\end{equation}
The function $(t,\eta)\mapsto \hat{\sU}(t,\eta,\xi)$,
$(t,\eta)\in\bar\bbR_+\times\bbR^d$ is
given by
\begin{equation}
\label{010405-20c}
\hat{\sU}(t,\eta,\xi):=\int_{\R^d}e^{-i\eta\cdot y}\sU(t,y+\xi t,\xi)dy.
\end{equation}
\end{theorem}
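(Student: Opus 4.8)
The argument has the shape of a martingale problem: set up a closed evolution equation for the compensated field, prove tightness, and identify every subsequential limit through the convergence of quadratic variations. First I would pass from the It\^o equation \eqref{e.eqito} to an equation for $X_\xi^{\eps}$. Taking the spatial Fourier transform of \eqref{e.eqito}, replacing $\xi$ by $\xi+\eps^2\eta$, rescaling time $t\mapsto t/\eps^2$ (which by Brownian scaling turns the $O(\eps)$ noise into a unit-scale increment), and multiplying by the compensating phase in \eqref{comp:wave}, the free-evolution phase $-\tfrac i2|\xi+\eps^2\eta|^2$ is cancelled exactly by the time-derivative of the exponential, and one is left with the closed, martingale-driven integral equation
\[
X_\xi^{\eps}(t,\eta)=\hat{\phi}_0(\xi+\eps^2\eta)-\frac{R(0)}{2}\int_0^t X_\xi^{\eps}(s,\eta)\,ds+M_\xi^{\eps}(t,\eta),
\]
where, for each $\eta$, $M_\xi^{\eps}(\cdot,\eta)$ is a complex continuous $L^2$-martingale for the filtration generated by $B$. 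A direct computation with the It\^o isometry for the spatially-smooth white-in-time noise gives the cross-brackets
\[
\big\langle M_\xi^{\eps}(\cdot,\eta_1),\overline{M_\xi^{\eps}(\cdot,\eta_2)}\big\rangle(t)=\frac{1}{(2\pi)^d}\int_0^t e^{i\vartheta_\eps(s)}\int_{\bbR^d}\hat{R}(p)\,\hat{\phi}\big(\tfrac{s}{\eps^2},\xi+\eps^2\eta_1-p\big)\overline{\hat{\phi}\big(\tfrac{s}{\eps^2},\xi+\eps^2\eta_2-p\big)}\,dp\,ds,
\]
with $\vartheta_\eps(s)=\tfrac{s}{2\eps^2}\big(|\xi+\eps^2\eta_1|^2-|\xi+\eps^2\eta_2|^2\big)\to s\,\xi\cdot(\eta_1-\eta_2)$, together with a ``pseudo-bracket'' $\big\langle M_\xi^{\eps}(\cdot,\eta_1),M_\xi^{\eps}(\cdot,\eta_2)\big\rangle(t)$ whose integrand, after using the closed equation for the anomalous second moment $\bbE[\hat{\phi}(\tau,a)\hat{\phi}(\tau,b)]$ to extract the fast free phase, carries the rapidly oscillating factor $e^{-i|p|^2/\eps^2}$ in the $p$-integration.

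Next I would establish tightness of $\{X_\xi^{\eps}\}$ in $C(\bar\bbR_+\times\bbR^d)$. Equicontinuity in $\eta$ on compacts is essentially deterministic: $X_\xi^{\eps}(t,\cdot)$ is Lipschitz with constant $\les(1+t)$, since $\hat{\phi}(t/\eps^2,\cdot)$ remains Lipschitz (from $\hat{\phi}_0\in\mathrm{Lip}$) and the phase has $\eta$-gradient $t(\xi+\eps^2\eta)$. Equicontinuity in $t$ follows from the integral equation: the drift term is Lipschitz in $t$, and the martingale increment is controlled by Burkholder--Davis--Gundy together with the bracket above, which is bounded by $C|t-t'|$ once one has the uniform moment bounds $\sup_{\tau,a}\bbE|\hat{\phi}(\tau,a)|^{2n}<\infty$ (following from standard estimates for this linear SPDE and the conservation of the $L^2$ norm in \eqref{e.eqito}). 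A Kolmogorov--Chentsov criterion then yields tightness.

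The core of the proof is the identification of the limit. For a subsequential limit $X_\xi$, the integral equation shows $dX_\xi=-\tfrac{R(0)}{2}X_\xi\,dt+dM_\xi$, so it remains to prove that $M_\xi^{\eps}$ converges to the Gaussian martingale $M_\xi=B_\xi$ of \eqref{010405-20}; by the martingale central limit theorem this reduces to showing that the brackets above converge \emph{in probability} to the deterministic limits $\int_0^{t}\hat{\sU}(s,\eta_1-\eta_2,\xi)\,ds$ and $0$. For the expectation of the main bracket, I would note that $\bbE[\hat{\phi}(\tfrac{s}{\eps^2},\xi+\eps^2\eta_1-p)\overline{\hat{\phi}(\tfrac{s}{\eps^2},\xi+\eps^2\eta_2-p)}]$, written in center/difference momentum variables, solves a closed equation which, after the parabolic rescaling, is precisely the spatial Fourier transform of the kinetic equation \eqref{020405-20}; hence it converges to $\widehat{\sw}(s,\eta_1-\eta_2,\xi-p)$, integrating against $\hat{R}(p)/(2\pi)^d$ produces $\widehat{\su}$ from \eqref{010405-20a}, and combining with the limit $e^{is\xi\cdot(\eta_1-\eta_2)}$ of $e^{i\vartheta_\eps(s)}$ and the identity \eqref{010405-20c} yields $\int_0^t\hat{\sU}(s,\eta_1-\eta_2,\xi)\,ds$; Proposition~\ref{prop010605-20} guarantees this limit is a genuine smooth function. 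The pseudo-bracket vanishes because of the $e^{-i|p|^2/\eps^2}$ oscillation: a stationary-phase estimate (nondegenerate critical point at $p=0$, $\hat{R}$ Schwartz) kills the $p$-integral as $\eps\to0$, which is the source of the circular symmetry of the limit.

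The main obstacle is not the convergence in expectation but the \emph{concentration} of the main bracket, i.e. $\Var\big(\big\langle M_\xi^{\eps}(\cdot,\eta_1),\overline{M_\xi^{\eps}(\cdot,\eta_2)}\big\rangle(t)\big)\to0$. This amounts to bounding a double integral over $s,s'$ of the $\hat{R}$-weighted covariance $\cov\big(|\hat{\phi}(\tfrac{s}{\eps^2},a)|^2,|\hat{\phi}(\tfrac{s'}{\eps^2},a')|^2\big)$ at the relevant shifted momenta, a fourth-moment quantity; for the It\^o--Schr\"odinger model the fourth moment obeys a closed equation but has no explicit solution, so one must extract, via a Duhamel/diagrammatic analysis of that equation, the decay of the connected fourth cumulant as $\eps\to0$. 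This decorrelation estimate --- the step referred to as the main thrust of the argument --- is where the bulk of the technical work lies; once it is available, the martingale central limit theorem together with the uniqueness of solutions to the linear Gaussian limit equation \eqref{e.limiteq} (whose solution is the process \eqref{e.limiteq} with covariance \eqref{010405-20}, obtained from the It\^o isometry) completes the proof.
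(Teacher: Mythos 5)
Your proposal follows essentially the same route as the paper: a martingale representation for the compensated field, Kolmogorov tightness, reduction via the martingale CLT (Jacod--Shiryaev) to convergence of the brackets, first-moment convergence of the bracket via the closed second-moment (kinetic) equation, and a fourth-moment decorrelation estimate for the variance. You also correctly locate the crux in the concentration of the main bracket. Three points, however, deserve flagging.

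First, the claim that $\eta$-equicontinuity is ``essentially deterministic'' because $\hat{\phi}(t/\eps^2,\cdot)$ remains Lipschitz is not right as stated: the $\xi$-derivative of the compensated field $\hat\psi_\eps(t,\cdot)$ is of order $\eps^{-2}$ (the phase $e^{i(2\xi\cdot p-|p|^2)s/(2\eps^2)}$ in the martingale integrand contributes a factor $sp/\eps^2$ upon differentiation), and it is only the $\eps^2$-scale of the increments $\xi+\eps^2\eta$ that compensates this. Turning that cancellation into a bound on $\|\hat\psi_\eps(s,\xi+\eps^2\eta_1-p)-\hat\psi_\eps(s,\xi+\eps^2\eta_2-p)\|_{2n}$ requires a genuinely stochastic argument: the paper splits the increment into a phase-difference term (controlled by $1-\cos x\les x^2$ and $\sup_p|p|^2\hat R(p)<\infty$) and a term fed back through the integral equation, closing with Gronwall (Lemma~\ref{l.macon}). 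Second, your stationary-phase-in-$p$ argument for the pseudo-bracket fails pathwise for the same reason: the random amplitude $\hat\psi_\eps(s,\xi\pm p+\eps^2\eta)$ has $p$-derivatives of order $\eps^{-2}$, so integration by parts against $e^{-i|p|^2 s/\eps^2}$ gains nothing. The paper instead shows $\E|\mathscr{Q}_{\eps,j,k}(t)|^2\to0$ and exploits the oscillation in the \emph{time} variable inside the diagram expansion, where the amplitudes are tame. Third, the decorrelation step is only named, not executed; in the paper it is carried out by a Wiener chaos expansion of $\hat\psi_\eps$ and a pairing (Wick) analysis showing that every non-ladder diagram carries a non-degenerate phase $(p'-p)\cdot w\,\lambda/\eps^2$ and vanishes for $p\neq p'$, so the fourth moment factorizes into the product of second moments. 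These are repairable within your framework, but the first two as written would not go through.
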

The proof of the theorem is presented in Section \ref{sec3}.

%The solution $X_\xi$ is written explicitly as 
%\[
%X_\xi(t,\eta)=\hat{\phi}_0(\xi)e^{-\frac12R(0)t}+\int_0^t e^{-\frac12R(0)(t-s)}dB_\xi(s,\eta)
%\]
%We emphasize that the above stochastic integral is in the It\^o sense and is only with respect to the time variable with $\eta$ treated as a parameter. 
%To compute the covariance, we first have 
%\[
%\begin{aligned}
%\E[X_\xi(t,\eta_1)X_\xi^*(t,\eta_2)]=&|\hat{\phi}_0(\xi)|^2e^{-R(0)t}\\
%&+\int_0^t e^{-R(0)(t-s)}\int_{\R^d}\frac{\hat{R}(p)}{(2\pi)^d}e^{ip\cdot(\eta_1-\eta_2)s}\widehat{\sW}(s,\eta_1-\eta_2,\xi-p)dpds
%\end{aligned}
%\]
%If we take $\eta_1=\eta_2=\eta$, then 
%\[
%\E[|X_\xi(t,\eta)|^2]=|\hat{\phi}_0(\xi)|^2e^{-R(0)t}+\int_0^t e^{-R(0)(t-s)}\int_{\R^d}\frac{\hat{R}(p)}{(2\pi)^d}\widehat{\sW}(s,0,\xi-p)dpds
%\]

\section{Discussion}

\label{sec33}

\subsection{On the interpretation of the result}

\label{sec011109-20}

\subsubsection{On the initial data}

Suppose that the initial data varies on
the microscopic scale and is described  by the family of wave functions
\begin{equation}
\label{011109-20}
\tilde\chi_\eps(y):=\frac{1}{\eps^{d}}\phi_0\left( y\right),\quad y\in\bbR^d,
\end{equation}
where $y$ is the spatial coordinate in the microscopic units. We
assume that the macroscopic coordinate is  given by
$x=\eps^2 y$, so the prefactor $\eps^{-d}$ in the left hand side of
\eqref{011109-20} assures that the macroscopic energy density of the
wave is of order $O(1)$, provided that $\phi_0\in L^2(\bbR^d)$. The initial data is
 fast oscillating on the macroscopic scale and  is described by the
 initial profile
\begin{equation}
\label{011109-20a}
\tilde\phi_\eps(x):=\frac{1}{\eps^{d}}\phi_0\left(\frac{x}{\eps^2}\right),\quad x\in\bbR^d,
\end{equation}
with $\phi_0\in L^2(\bbR^d)$. The family $(\tilde\phi_\eps)_{\eps\in(0,1]}$ forms a bounded set in
$L^2(\bbR^d)$.

\subsubsection{Compensated wave-function, Wigner and smoothed Wigner
  functions}

Consider now
$\tilde\phi_\eps(t,x)=\eps^{-d}\phi\left(\frac{t}{\eps^2},
  \frac{x}{\eps^2}\right)$, where $\phi$ is the solution  of
\eqref{e.maineq}. Since the laws of the noise $\frac{1}{\eps}
V\left(\frac{t}{\eps^2},\frac{x}{\eps^2}\right)$ and that of $
V\left(t,\frac{x}{\eps^2}\right)$ are identical,  the  law of $\tilde\phi_\eps(t,x)$
coincides with   that of the solution of the equation
\begin{equation}\label{e.maineq-eps}
\begin{aligned}
&i\partial_t \tilde\phi_\eps(t,x)+\frac{\eps^2}{2}\Delta \tilde\phi_\eps(t,x)-V\left(t,\frac{x}{\eps^2}\right)\circ \tilde\phi_\eps(t,x)=0, \quad\quad (t,x)\in\R_+\times\R^d,\\
&\tilde\phi_\eps(0,x)=\tilde\phi_\eps(x).
\end{aligned}
\end{equation}
Furthermore, the Fourier transform is related to the unscaled wave function through
\begin{equation}
\label{031109-20}
\hat{\tilde\phi}_\eps\left(t,\frac{\xi}{\eps^2}\right)=\eps^d\hat{\phi}\left(\frac{t}{\eps^2},\xi\right).
\end{equation}
The compensated wave-function, defined as  
\begin{equation}
\label{comp:wave1}
X_\xi^{\eps}(t,\eta):=\frac{1}{\eps^{d}}\hat{\tilde\phi}_\eps\left(t,\frac{\xi +\eps^2\eta}{\eps^2}
\right)\exp\left\{\frac{it}{2\eps^2}|\xi+\eps^2\eta|^{2}\right\},\quad (t,\eta)\in \bar\bbR_+\times\bbR^d.
\end{equation}
is given by the expression \eqref{comp:wave}.
Its inverse Fourier transform is a spectral measure defined by the equality
$$
\int_{\bbR^d}{\cal
  X}_\eps(t,dx,\xi)J^*(x):=\frac{1}{(2\pi)^d}\int_{\bbR^d}X_\xi^{\eps}(t,\eta)\hat
J^*(\eta)d\eta
$$
for any $J$ that belongs to the Schwartz class ${\cal S}(\bbR^d)$. For
fixed $\eps>0,\xi\in\R^d,t>0$,  $X^\eps_\xi(t,\cdot)$ belongs to
$L^2(\R^d)$ almost surely. Therefore, we know that ${\cal
  X}_\eps(t,dx,\xi)$ actually has a density in $x$, which we shall also
denote, with some abuse of the notation,  by ${\cal X}_\eps(t,x,\xi)$.
Denote  by
$
{\cal X}(t,dx,\xi)$ the respective spectral measure associated with the
stationary field $\eta\mapsto X_\xi(t,\eta)$.

Note that $X_\xi^\eps(0,\eta)=\hat{\phi}_0(\xi+\eps^2\eta)$, so obviously we have
\begin{equation}
\label{021109-20aa}
\lim_{\eps\to0}
\int_{\bbR^{2d}}{\cal X}_\eps(0,x,\xi)J^*(x,\xi)dxd\xi
=\int_{\bbR^{d}}\hat\phi_0(\xi)\hat{J}^*(0,\xi)d\xi
\end{equation}
for any test function $J\in{\cal
  S}(\bbR^{2d}).$ 
Hence
\begin{equation}
\label{021109-20aa}
\lim_{\eps\to0}
{\cal X}_\eps(0,x,\xi)
=\hat\phi_0(\xi)\delta(x),
\end{equation}
$*$-weakly in ${\cal
  S}'(\bbR^{2d}).$
The following result holds.
\begin{proposition}\label{c.c1a}
Fix any test function $J\in {\cal S}(\bbR^{2d})$
and  $t\ge 0$. Then, for any $\xi\in\bbR^d$ we have 
\begin{equation}
\label{021109-20ab}
\lim_{\eps\to0}\int_{\bbR^{d}}{\cal X}_\eps(t,x,\xi)J^*(x,\xi)dx
=\int_{\bbR^{d}}{\cal X}(t,dx,\xi)J^*(x,\xi)
\end{equation}
in law. In addition
\begin{equation}
\label{021109-20a}
\begin{split}
&
\lim_{\eps\to0}\int_{\bbR^{2d}}{\cal X}_\eps(t,x,\xi)J^*(x,\xi)dxd\xi
=\frac{e^{-\frac12tR(0)}}{(2\pi)^d}\int_{\bbR^{2d}}
\hat{J}^*(\eta,\xi)\hat\phi_0(\xi)   d\eta d \xi\\
&
=e^{-\frac12tR(0)}\int_{\bbR^{d}}
J^*(0,\xi)\hat\phi_0(\xi)   d\xi
\end{split}
\end{equation}
in $L^2(\Omega)$, as $\eps\to0$.%the $L^2$ sense over the probability space, as $\eps\to0$.
\end{proposition}
The proof of the proposition is contained in Section \ref{s.c}.

%\$$$$$

% Let us mention in passing that in \cite{gs2} the case of slowly
% varying microscopic initial data has been considered. The respective microscopic and macroscopic profiles for the initial data considered there
% are
% described by the functions  
% \begin{equation}
% \label{011109-20b}
% \tilde\chi_\eps'(y):=\phi_0\left(\eps^2 y\right),\quad\mbox{and}\quad \tilde\phi_\eps'(x):=\phi_0\left(x\right)
% \end{equation}
% where $y$ and $x$ correspond to the microscopic and macrscopic
% coordinates respectively.

It is worthwhile to compare the behavior of the compensated
wave function  with that of the Wigner functions corresponding to the family $(\tilde\phi_\eps)_{\eps\in(0,1]}$, cf
e.g. \cite[Item 1), p. 557]{lions-paul},
\begin{equation}
\label{Wigner-z}
\sw_\eps(t,x,\xi):=\int_{\bbR^d}\tilde\phi_\eps\left(t,x+\frac{\eps^2 y}{2}\right)
\tilde\phi_\eps^*\left(t,x-\frac{\eps^2  y}{2}\right)e^{-i\xi\cdot
  y}dy.
\end{equation}
By taking the Fourier transform, we obtain
\begin{equation}
\label{Wigner-y}
\begin{split}
&
\sw_\eps(t,x,\xi)
=\frac{1}{(2\pi \eps^2)^{d}}\int_{\bbR^d}\hat{\tilde\phi}_\eps\left(t,\frac{\xi}{\eps^2}+\frac{\eta}{2}\right)
\hat{\tilde\phi}_\eps^*\left(t,\frac{\xi}{\eps^2}-\frac{\eta}{2}\right)e^{i\eta\cdot
  x}
d\eta\\
&
=\frac{1}{(2\pi)^{d}}\int_{\R^d} \hat{\phi}\left(\frac{t}{\eps^2},\xi+\frac{\eps^2\eta}{2}\right)\hat{\phi}^*\left(\frac{t}{\eps^2},\xi-\frac{\eps^2\eta}{2}\right)e^{i\eta\cdot x} d\eta. 
\end{split}
\end{equation}
Using \eqref{comp:wave} we get
\begin{equation}\label{e.defwigner}
\sw_\eps(t,x,\xi)
=\frac{1}{(2\pi)^{d}}\int_{\R^d} X_\xi^\eps \left (t,\frac{\eta}{2}\right)(X_\xi^{\eps})^*\left (t,-\frac{\eta}{2}\right)e^{i\eta \cdot(x-\xi t)}d\eta.
\end{equation}

% It is natural to consider the wave function expressed in the physical variable, i.e., the function $\phi(t,x)$, and also the Wigner transform  
% which, in the physical domain, can be written as 
% \begin{equation}\label{e.9101}
% \sw_\eps(t,x,\xi)= \left(\frac{2\pi}{\eps^2}\right)^d\int_{\R^d} \phi(\frac{t}{\eps^2},\frac{x}{\eps^2}-\frac{y}{2})\phi^*(\frac{t}{\eps^2},\frac{x}{\eps^2}+\frac{y}{2})e^{i\xi \cdot y}dy.
% \end{equation}
Theorem~\ref{t.mainth} implies the following, see Section~\ref{s.c} below for the proof.
\begin{proposition}\label{c.c1}
Fix any test function $J\in {\cal S}(\bbR^{2d})$  
and $t>0$. Then, for any $\xi\in\R^d$ we have 
\begin{equation}\label{e.982}
\int_{\R^d} \sw_\eps(t,x,\xi)J^*(x,\xi)dx \Rightarrow \frac{1}{(2\pi)^{d}}\int_{\R^d}
X_\xi\left(t,-\frac{\eta}{2}\right)X_\xi^*\left(t,\frac{\eta}{2}\right)e^{-i\eta \cdot\xi
  t}\hat J^*(\eta,\xi)d\eta,
\end{equation}
in distribution, as $\eps\to0$. In addition, (cf \eqref{011609-20} below)
\begin{equation}\label{e.982}
\begin{aligned}
\lim_{\eps\to0}\int_{\R^{2d}} \sw_\eps(t,x,\xi)J^*(x,\xi)dx d\xi =&\frac{1}{(2\pi)^{d}}\int_{\R^d}
\E\left[X_\xi\left(t,-\frac{\eta}{2}\right)X_\xi^*\left(t,\frac{\eta}{2}\right)\right]e^{-i\eta \cdot\xi
  t}\hat J^*(\eta,\xi)d\eta d\xi\\
=&\int_{\R^{2d}}
\sw(t,x,\xi) J^*(x,\xi)dx d\xi,
\end{aligned}
\end{equation}
in $L^2(\Omega)$, as $\eps\to0$.
\end{proposition}
%\begin{remark}
% The factor $(2\pi)^{-d}$ on the r.h.s. of \eqref{e.982} comes from the fact that $\sw_\eps(0,x,\xi)$ converges $*$-weakly in ${\cal
%   S}'(\bbR^{2d})$ to $(2\pi)^{-d}|\hat{\phi}_0(\xi)|^2\delta(x)$ which equals to $(2\pi)^{-d}\sw(0,x,\xi)$ with $\sw$ defined in \eqref{020405-20}.
% \end{remark}

Formula \eqref{021109-20aa} shows that our highly oscillatory initial data
localizes at $x=0$. To ``smear'' the observed position around $0$
we introduce also the {\em smoothed Wigner function}, cf  \cite[formula (89), p. 65]{gs1}, defined as follows
\begin{equation}
\label{Wigner-sf}
\sw_\eps^{\mathrm{s}}(t,x,\xi)
:=\frac{1}{2^{d/2}\pi^{3d/2}\eps^{2d}}\left|\int_{\bbR^d}e^{i\eta\cdot
  x}e^{-|\eta|^2}\hat{\tilde \phi}_\eps^*\left(t,\frac{\xi}{\eps^2}-\eta\right)d\eta\right|^2.
\end{equation}
A simple calculation shows that
\begin{equation}
\label{Wigner-sfc}
  \sw_\eps^{\mathrm{s}}(t,x,\xi)
=\frac{1}{(2\pi)^d}\int_{\bbR^{2d}}\sw_\eps\left(t,x-y,\xi-\frac{\eps^2\eta}{2}\right)
  \exp\left\{-\frac{|y|^2}{2}-\frac{|\eta|^2}{2}\right\}dy d\eta.
\end{equation}
In other words, $\sw_\eps^{\mathrm{s}}$ is an average of $\sw_\eps$ on the $O(1)-$scale of the spatial variable and  on the $O(\eps^2)-$scale of the frequency variable. Using \eqref{e.defwigner} and \eqref{Wigner-sf}, for the family $(\tilde\phi_\eps)_{\eps\in(0,1]}$ given by
\eqref{031109-20}, we get therefore
\begin{equation}
\label{Wigner-sf1}
\begin{split}
&\sw_\eps^{\mathrm{s}}(t,x,\xi)=\frac{1}{2^{d/2}\pi^{3d/2}}\int_{\bbR^{2d}}(X_{\xi}^{\eps})^*\left
  (t,-\eta\right) X_{\xi}^\eps \left
  (t,-\eta'\right)\\
&
\times e^{i(\eta-\eta') \cdot(x-\xi t)}
  \exp\left\{-|\eta|^2\left(1-\frac{i\eps^2t}{2}\right)-|\eta'|^2\left(1+\frac{i\eps^2t}{2}\right)\right\} d\eta d\eta'.
\end{split}
\end{equation}
From Theorem~\ref{t.mainth}, we conclude immediately the following.
\begin{proposition}\label{prop011409-20}
Fix any $(t,x,\xi)\in\bbR_+\times\R^{2d}$. Then,
\begin{equation}\label{e.982-1}
  \sw_\eps^{\mathrm{s}}(t,x,\xi)\Rightarrow \frac{1}{2^{d/2}\pi^{3d/2}}\left|\int_{\bbR^d}e^{i\eta\cdot
  (x-\xi t)}e^{-|\eta|^2}X_{\xi}^*\left
  (t,-\eta\right) d\eta\right|^2,
\end{equation}
in distribution as $\eps\to0$.
\end{proposition}

\subsection{$X_\xi(t,\eta)$ as an inhomogeneous Ornstein-Uhlenbeck
  process}
\label{sec1.2.1}

Let us make a few comments about the limiting equation
\eqref{e.limiteq}.
 Note that in light of Proposition \ref{prop010605-20}, we can write
\begin{equation}
\label{cyl}
B_\xi(t,\eta)=\int_0^t \int_{\R^d}e^{-i\eta\cdot y}\sU^{1/2}(s,y+\xi
s,\xi)B_{\rm w}(ds,dy),
\end{equation}
where $B_{\rm w} (ds,dy)$ is complex valued space-time  white noise,
i.e.
\begin{equation}
\label{010405-20z}
\begin{aligned}
&\E[B_{\rm w} (dt,dx) B_{\rm w}^*(ds,dy)]=\delta(t-s)\delta(x-y)dtdsdxdy,\\
&\E[B_{\rm w} (dt,dx) B_{\rm w} (ds,dy)] =0,\quad (t,x),\, (s,y)\in\bbR\times\bbR^d.
\end{aligned}
\end{equation}
We can write therefore
\begin{equation}
\label{010605-20}
\left\{
\begin{array}{l}
dX_\xi(t,\eta)=-\frac12R(0)X_\xi(t,\eta)dt+\mathlarger{\int}_{\R^d}e^{-i\eta\cdot y}\sU^{1/2}(t,y+\xi
t,\xi) B_{\rm w} (dt,dy), \\
X_\xi(0,\eta)=\hat{\phi}_0(\xi),
\end{array}
\right.
\end{equation}
so for fixed $\xi,\eta\in\R^d$, $X_\xi(t,\eta)$ is actually a time-inhomogeneous  Ornstein-Uhlenbeck
process. 

\subsection{The covariance structure of $X_\xi(t,\eta)$}
For fixed $(t,\xi)\in\R_+\times\R^d$, the field $\eta\mapsto
X_\xi(t,\eta)$, $\eta\in\bbR^d$ is a stationary complex-valued
Gaussian. A direct calculation, using \eqref{010605-20} and
\eqref{020405-20}, shows that its second absolute moment
$\tilde{\sw}(t,\xi):=\E[|X_\xi(t,\eta)|^2]$ satisfies the homogeneous
linear Boltzmann equation 
\begin{equation}\label{e.eqmomentum}
\partial_t \tilde{\sw}(t,\xi)=\int_{\R^d}\frac{\hat{R}(p)}{(2\pi)^d} [\tilde{\sw}(t,\xi-p)-\tilde{\sw}(t,\xi)]dp, \quad\quad \tilde{\sw}(0,\xi)=|\hat{\phi}_0(\xi)|^2.
\end{equation}
We can also compute its mean and the covariance function:
\begin{equation}
\begin{aligned}\label{e.cov1}
&
\bbE X_\xi(t,\eta)=\hat{\phi}_0(\xi)e^{-\frac12R(0)t},\\
&
{\rm Cov}\big(X_\xi(t,\eta_1),X_\xi(t,\eta_2)\big)=\int_0^t\int_{\R^d}e^{-R(0)(t-s)}\hat{\sU}(s,\eta_1-\eta_2,\xi) ds.
\end{aligned}
\end{equation}
From the equation satisfied by $\sw$, see \eqref{020405-20}, we conclude that $\hat{\sw}$
- its  Fourier transform in the $x$ variable - satisfies the equation:
\begin{equation}
\label{030605-20}
\begin{aligned}
&\partial_t \hat{\sw}(t,\eta,\xi)+i\xi\cdot \eta\, \hat{\sw}(t,\eta,\xi)=\int_{\R^d}\frac{\hat{R}(p)}{(2\pi)^d} \hat{\sw}(t,\eta,\xi-p)dp-R(0)\hat{\sw}(t,\eta,\xi),\\
&\hat{\sw}(t,\eta,\xi)=|\hat{\phi}_0(\xi)|^2.
\end{aligned}
\end{equation}

Define 
\begin{equation}
\label{030605-20}
\hat{\mathsf{f}}(t,\eta,\xi):=\hat{\sw}(t,\eta,\xi)e^{i\xi\cdot
  \eta t}.
\end{equation}
From \eqref{030605-20} we conclude that it satisfies 
the integral equation
\begin{equation}\label{e.cov2}
\hat{\mathsf{f}}(t,\eta,\xi)=|\hat{\phi}_0(\xi)|^2e^{-R(0)t}+\int_0^te^{-R(0)(t-s)}\int_{\R^d} \frac{\hat{R}(p)}{(2\pi)^d}e^{ip\cdot \eta s}\hat{\mathsf{f}}(s,\eta,\xi-p)dpds.
\end{equation}
The solution can be written as an infinite series expansion
\begin{equation}\label{e.cov3}
\begin{aligned}
&\hat{\mathsf{f}}
(t,\eta,\xi) =e^{-R(0)t}\left\{|\hat{\phi}_0(\xi)|^2\vphantom{\int_0^1}\right.\\
&
\left.+\sum_{n\geq
    1}\int_{[0,t]_<^n}\int_{\R^{nd}} \prod_{j=1}^n
  \frac{\hat{R}(p_j)e^{i\eta\cdot p_j
      s_j}}{(2\pi)^d}|\hat{\phi}_0(\xi-p_1-\ldots-p_n)|^2d\mathbf{p}_{1,n}d\mathbf{s}_{1,n}\right\}.
\end{aligned}
\end{equation}
Here 
\[[0,t]_<^n=\{0<s_n<\ldots<s_1<t\}
\] is the $n$ dimensional
simplex, $d\mathbf{p}_{1,n}:=dp_1\ldots dp_n$, $d{\bf s}_{1,n}=ds_1\ldots
ds_n$.

Comparing \eqref{030605-20} with  \eqref{010405-20c}, we obtain
\begin{equation}
\label{020605-20}
\int_{\R^d} \frac{\hat{R}(p)}{(2\pi)^d}e^{ip\cdot \eta s}\hat{\mathsf{f}}(s,\eta,\xi-p)dp= \hat{\sU}(s,\eta,\xi),
\end{equation}
therefore, from \eqref{e.cov1} and \eqref{e.cov2}, we conclude that 
\begin{equation}
\label{011609-20}
\E[X_\xi(t,\eta_1)X_\xi^*(t,\eta_2)]=\hat{\mathsf{f}} (t,\eta_1-\eta_2,\xi)=\hat{\sw}(t,\eta_1-\eta_2,\xi)e^{i\xi\cdot
  (\eta_1-\eta_2) t}.
\end{equation}
\medskip

\subsection{Some additional remarks}
%We make a few remarks on our result. 

\begin{remark}
The limit of $X_\xi^\eps(t,\eta)$
can be written as 
 $$
X_\xi(t,\eta)=\bbE X_\xi(t,\eta)+\tilde X_\xi(t,\eta).
 $$
 The mean represents the ballistic component of the
 (compensated) wave field and the fluctuating part  corresponds to the
 scattering (random) component and is given by a stochastic
 convolution, see formula \eqref{e.limiteq}. Since the latter term is a complex Gaussian,
 i.e., its real and imaginary part are independent zero mean Gaussians
 with the same variance, given by
\[
\sigma^2(t,\xi)=\frac12 \bbE |\tilde X_\xi^\eps(t,\eta)|^2=\frac12\bigg(\tilde{\sw}(t,\xi)-|\hat{\phi}_0(\xi)|^2e^{-R(0)t}\bigg),
\]
where $\tilde{\sw}$ solves \eqref{e.eqmomentum}. It is an elementary
fact that the intensity of the scattering component, defined as
  $|\tilde X_\xi^\eps(t,\eta)|^2$, is of exponential distribution $\mathrm{Exp}(\tfrac{1}{2\sigma^2(t,\xi)})$.
\end{remark}

\begin{remark}
\label{rm1.3}
Theorem \ref{t.mainth} concerns the asymptotics of  the compensated
wave function
$X_\xi^\eps(t,\eta)$, defined in \eqref{comp:wave}. As can be seen
from its definition, the field is obtained from
the Fourier transform of the solution  of \eqref{e.maineq} by ``compensating'' with the fast oscillating phase corresponding to the
free Schr\"{o}dinger equation (with no potential present). This object has been
introduced in \cite{bkr}.
To prove Gaussianity of the scaled limit, we 
use the white-in-time structure of the potential and study the relevant martingales.
By invoking a martingale central limit theorem, the argument reduces
to proving the convergence of quadratic variations.  This in turn
involves a fourth moment calculation and  we deal with it via
a diagram expansion method. The $\delta-$correlation in time simplifies the diagrams significantly,
compared to the smoothly correlated case. On a heuristic level, the
convergence of the quadratic variation to a deterministic limit comes
from a self-averaging effect, which roughly says that for any two
distinct frequencies $\xi_1\neq \xi_2$ the wave function evaluated at
$\xi_1$ and $\xi_2$ are asymptotically independent hence the
randomness disappears after the averaging. We will discuss this
phenomenon in more details in Remark~\ref{r.heuristics}.
\end{remark}

\begin{remark}
Although we do not provide the details, analogous results for other
dispersion relations can be derived almost verbatim. More precisely we
can replace the laplacian $
\Delta$ by its fractional counterpart $-|\Delta|^\alpha$ for any
$\alpha>0$, or operators defined by other Fourier multipliers, and
prove a similar result, with an appropriately adjusted scaling. An interesting feature is that the dispersion relation neither affects
the limiting Gaussian nature nor the marginal distribution. As it will
become clear later on, the Gaussianity comes from the magnitude of the phase, rather than its specific structure, and the second moment calculation in \eqref{e.eqmomentum} does not involve the phase information due to the unitary evolution of the Schr\"odinger equation, so the marginal distribution is always the same. The dispersion relation only shows up in the limiting covariance structure. 
\end{remark}

%\textcolor{red}{\bf DOTAD}

\begin{remark}
\label{rmk1.5}
The rescaled wave function $\phi_\eps(t,x)=\phi(\frac{t}{\eps^2},x)$ satisfies
\begin{equation}
\label{010705-20}
i\partial_t\phi_\eps+\frac{1}{2\eps^2}\Delta\phi_\eps-\frac{1}{\eps}V(\frac{t}{\eps^2},x)\circ\phi_\eps=0.
\end{equation}
Since the potential is $\delta-$correlated in time, the laws of the
fields
$\left(\tfrac{1}{\eps}V(\tfrac{t}{\eps^2},x)\right)_{(t,x)\in\bbR^{1+d}}$
and that of  $\left( V(t,x) \right)_{(t,x)\in\bbR^{1+d}}$
coincide. Therefore, the law of $\left( \phi_\eps(t,x)
\right)_{(t,x)\in\bbR^{1+d}}$ is the same as that of  the solution of 
\begin{equation}\label{e.531}
i\partial_t\phi_\eps+\frac{1}{2\eps^2}\Delta\phi_\eps-V(t,x)\circ\phi_\eps=0.
\end{equation}
It is clear that we can replace the factor $\tfrac{1}{\eps^2}$ in the above
equation by $\tfrac{1}{\ga}$, send $\ga\to0$, and adjust
accordingly the
formulation of our result.
\end{remark}

\begin{remark}
From an application point of view, we start with the Schr\"odinger equation of the form 
\[
i\partial_t \Phi+\frac{c_0}{2\omega}\Delta \Phi-\frac{\omega\sigma}{2c_0}V(t,x)\circ \Phi=0,
\] which is the standard form that comes from the paraxial approximation, see e.g. \cite{prs}. Here $c_0\sim O(1)$ is a constant of  describing the average wave speed, $\omega\gg1$ is the wave frequency, and $\sigma$ is the strength of random media. Now suppose we consider a propagation distance of order $L\gg1$, so $\Phi_L(t,x)=\Phi(tL,x)$ satisfies 
\[
i\partial_t \Phi_L+\frac{c_0L}{2\omega}\Delta \Phi-\frac{L\omega\sigma}{2c_0}V(tL,x)\circ\Phi_L=0.
\]
By the scaling property of $V$, $\Phi_L$ has the same law as the solution to 
\[
i\partial_t \Phi_L+\frac{c_0L}{2\omega}\Delta \Phi-\frac{\sqrt{L}\omega\sigma}{2c_0}V(t,x)\circ \Phi_L=0.
\]
Thus, if the parameters $(\omega,\sigma,L)$ satisfy 
\[
\sqrt{L}\omega\sigma\sim O(1), \quad\mbox{and }\quad \frac{L}{\omega}\gg1,
\] we are in the regime given by \eqref{e.531} and the result in the
paper shows that the compensated wave function has then an approximate Gaussian distribution.
\end{remark}

\begin{remark}
Another interesting scaling regime one can consider is the following. Suppose we start with a Schrodinger equation with a random driving force of order $O(1)$:
\[
i\partial_t\phi+\frac12\Delta\phi- V(t,x)\circ\phi=0.
\]
The goal is to study the long time behavior of $\phi(t,x)$. It can be
checked that the second moment $\E[|\hat{\phi}(t,\xi)|^2]$ equals to
$\tilde{\sw}(t,\xi)$, which evolves according to
\eqref{e.eqmomentum}. From the probabilistic point of view, the
equation is associated with a Markov jump process  corresponding to
the momentum variable. It performs a jump process with the kernel $(2\pi)^{-d}\hat{R}(p)$. A standard diffusion approximation yields that $\tilde{\sw}_\eps(t,\xi):=\tilde{\sw}(\tfrac{t}{\eps^2},\frac{\xi}{\eps})$ satisfies 
\[
\partial_t \tilde{\sw}_\eps\approx \frac12\nabla\cdot\left( \mathsf{D} \nabla \tilde{\sw}_\eps\right), \quad\mbox{with}\quad \mathsf{D}:=\int_{\R^d}\frac{\hat{R}(p)p\otimes p}{(2\pi)^d}  dp.
\]
In other words, the second moment actually converges to the solution
of a heat equation in the high frequency regime, which is very natural
as the Schr\"odinger dynamics mixes low and high frequencies and we are looking at
an ``infinite'' long time scale. In this case, to study the behavior
of $\hat{\phi}(\tfrac{t}{\eps^2},\tfrac{\xi}{\eps})$ is a challenging
problem. To go from the jump process to a diffusion process, the
number of jumps needs to go to infinity and the effect of each individual jump
goes to zero, in other words, the effective contribution to the second
moment in the diffusive regime comes from infinitely many negligible
jumps. From a mathematical point of view, if we write
$\hat{\phi}(\tfrac{t}{\eps^2},\tfrac{\xi}{\eps})$ by a Wiener chaos
expansion, then the main contribution to its second moment comes from
those chaos of very high order, each of which goes to zero while the
sum converges as a Riemann sum. As a result, the dependence of the
wave function on the random media becomes increasingly nonlinear as
$\eps\to0$. To study the weak convergence of such random variables is
difficult, and this also appears in the study of long time behaviors of
random heat equations which requires new ideas and tools. In the weak
forcing regime we consider here, the randomness does not escape to the
tail of the chaos expansion, so it suffices to pass to the limit on the
term by term basis.
\end{remark}

\begin{remark}\label{r.multi}
While Theorem~\ref{t.mainth} is on the compensated wave function restricted to a small neighborhood of a fixed frequency $\xi$, the same proof in the paper applies to finitely many  different frequencies $\xi_1,\ldots,\xi_n$. In particular, the following result holds: if $\xi_i\neq \xi_j$ for $i\neq j\in \{1,\ldots,n\}$, then 
\[
(X_{\xi_1}^\eps(\cdot,\cdot),\ldots,X_{\xi_n}^\eps(\cdot,\cdot))\Rightarrow (X_{\xi_1}^{(1)}(\cdot, \cdot),\ldots,X_{\xi_n}^{(n)} (\cdot, \cdot))
\]
in distribution in $\underbrace{C(\bar{\bbR}_+\times\bbR^d)\times
  \cdot\times C(\bar{\bbR}_+\times\bbR^d)}_{n-\mbox{\tiny copies}}$, where
$\{X_{\xi_j}^{(j)}(\cdot,\cdot)\}_{j=1}^n$ are independent and each
component   $X_{\xi_j}^{(j)}(\cdot, \cdot)$ has the same law as that
of 
$X_{\xi_j}(\cdot, \cdot)$.
 \end{remark}

\begin{remark}
Let us discuss two related works here. In \cite{bkr}, the authors
studied the same problem except that the random potential has a smooth
temporal covariance function, which creates extra technical
difficulties in diagram expansions that we do not encounter here. The
result in \cite{bkr} is on the convergence of marginal distributions
and the proof is based on the moment convergence. Thus, the main
contribution of this paper is to show the convergence on the process level and
to present a simpler proof using the martingale structure. In \cite{gs1}, the authors considered the same weak forcing regime of the It\^o-Schr\"odinger model as ours (referred as the scintillation regime in the paper), i.e. 
\[
\text{propagation distance} =  \frac{1}{\text{size of forcing}^2},
\]
but with a low frequency initial condition, see \cite[Equations
(45)-(47)]{gs1}. One of the results in \cite{gs1} is that the fourth
moments of the wave field approximately satisfy the Gaussian summation
rule. Our conclusion is that the approximation holds also for the
respective laws.
\end{remark}

\section{Proof of Proposition \ref{prop010605-20}}
\label{sec2}

The Duhamel solution of \eqref{020405-20} is given by the following
series expansion
\begin{align}\label{e.cov3:a}
&\mathsf{w}(t,dx,\xi)=e^{-R(0)t}\Big\{|\hat{\phi}_0(\xi)|^2\delta_0(dx-\xi
 t) +
\sum_{n= 1}^{+\infty}\int_{\Delta_n(t)}d\tau_{1,n}\int_{\R^{nd}} 
|\hat{\phi}_0(p_n)|^2 \\
&
\times\delta_0\Big(dx-\xi
 \tau_1-p_1\tau_2-\ldots-p_{n-1}\tau_n-p_n(t-\tau_n)\Big)\prod_{j=1}^n
\frac{\hat{R}(p_{j-1}-p_j)}{(2\pi)^d} d\mathbf{p}_{1,n}\Big\}.\notag
\end{align}
Here $p_0=\xi$ and $d\mathbf{p}_{1,n}:=dp_1\ldots dp_n$, $d\tau_{1,n}=d\tau_1\ldots
d\tau_n$ and
$$
\Delta_n(t):=\bigg\{(\tau_1,\ldots,\tau_n):\,\tau_j\ge0,\,j=1,\ldots,n,\,\sum_{j=1}^n\tau_j\le
t\bigg\}.
$$
Using the formula
$$
\int_{\bbR^d}\delta_0\Big(dx-(\xi-p)\tau-z\Big)\frac{\hat
  R(p)dp}{(2\pi)^d}=\left(\frac{1}{2\pi \tau}\right)^d\hat
R\Big(\xi-\frac{x-z}{\tau}\Big)dx,\quad (\tau,z,\xi)\in\bbR_+\times\bbR^{2d},
$$
we conclude that, cf \eqref{010405-20a},
\begin{align}
&{\sf{u}}(t,dx,\xi)=
e^{-R(0)t}\Big\{ \left(\frac{1}{2\pi t }\right)^d|\hat{\phi}_0(\xi)|^2\hat
R\Big(\xi-\frac{x}{t}\Big)+
\sum_{n= 1}^{+\infty}\int_{\Delta_n(t)}d\tau_{1,n}\int_{\R^{nd}} 
\left(\frac{1}{2\pi \tau_1}\right)^d |\hat{\phi}_0(p_n)|^2\\
&
\times \hat
R\Big(\xi-\frac{1}{\tau_1}\big(x-p_1\tau_2-\ldots-p_{n-1}\tau_n-p_n(t-\tau_n)\big)\Big)\prod_{j=1}^n
\frac{\hat{R}(p_{j-1}-p_j)}{(2\pi)^d} d\mathbf{p}_{1,n}\Big\}dx,\notag
\end{align}
and the conclusion of the proposition follows.

\section{Proof of Theorem \ref{t.mainth}}
\label{sec3}

\subsection{Preliminaries}

The rescaled wave function $\phi_\eps(t,x)=\phi(\tfrac{t}{\eps^2},x)$
satisfies \eqref{010705-20}.
% \[
% i\partial_t\phi_\eps+\frac{1}{2\eps^2}\Delta\phi_\eps-\frac{1}{\eps}V(\frac{t}{\eps^2},x)\phi_\eps+\frac{i}{2}R(0)\phi_\eps=0, \quad\quad \phi_\eps(0,x)=\phi_0(x).
% \]
Due to the scale invariance of the white noise, see Remark
\ref{rmk1.5} above, the solution coincides, up to the law, with the
solution of the It\^o equation %. With some abuse of notation, we study the following equation instead:
\[
id\phi_\eps(t,x)+\left(\frac{1}{2\eps^2}\Delta\phi_\eps(t,x)+\frac{i}{2}R(0)\phi_\eps\right)dt-B(dt,x)\phi_\eps(t,x)=0,% \quad\quad \phi_\eps(0,x)=\phi_0(x).
\]
where $B(t,x)$ is the Wiener process with the covariance given by \eqref{B-cor}.
%the only $\eps-$dependence appears as the large factor in front of
%the Laplacian. 
We rewrite the above equation in the Fourier domain as
\begin{equation}\label{e.mainfou}
id\hat{\phi}_\eps+\left(-\frac{|\xi|^2}{2\eps^2}\hat{\phi}+\frac{i}{2}R(0)\hat{\phi}_\eps\right)dt- \int_{\R^d} \frac{\hat{B}(dt,dp)}{(2\pi)^{d}} \hat{\phi}_\eps(t,\xi-p)=0,
\end{equation}
where $\hat{B}(dt,dp)$ is the  Gaussian noise with the correlation
\begin{equation}
\label{corel}
\begin{aligned}
&\bbE\left[\hat{B}(dt,dp)
 \hat{B}^*(ds,dq)\right]=(2\pi)^d\hat R(p)\delta(t-s)\delta(p-q)dtdsdpdq,\\
&
\hat{B}^*(dt,dp)=\hat{B}(dt,-dp).
\end{aligned}
\end{equation}
Define
\begin{equation}
\label{comp}
\hat\psi_\eps(t,\xi):=\hat{\phi}_\eps(t,\xi)\exp\left\{\left(\frac{i|\xi|^2}{\eps^2}+R(0)\right)\frac{t}{2}\right\}.
%=\hat{\phi}(\frac{t}{\eps^2},\xi)\exp(i|\xi|^\alpha t+\tfrac12\eps^2R(0)t).
\end{equation}
Note that by \eqref{comp:wave}
\begin{equation}
\label{012305-20}
X^\eps_\xi(t,\eta)=\hat\psi_\eps(t,\xi+\eps^2 \eta)e^{-\frac12R(0)t}.
\end{equation}
The random field $\hat\psi_\eps(\cdot)$ is a solution of  the integral equation 
\begin{equation}\label{e.inteeq}
\hat\psi_\eps(t,\xi)=\hat{\phi}_0(\xi)+M_\eps(t,\xi).
\end{equation}
Here 
\begin{equation}
\label{M-eps}
M_\eps(t,\xi):=\frac{1}{i(2\pi)^{d}}\int_0^t\int_{\R^d}
\exp\left\{i\big(2\xi\cdot p-|p|^2\big)\frac{s}{2\eps^2}\right\}\hat\psi_\eps(s,\xi-p) \hat{B}(ds,dp).
\end{equation}
Note that
$\{M_\eps(t,\xi)\}_{t\geq0}$ is a continuous trajectory square
integrable  martingale, with respect to the filtration 
\[
\mathcal{F}_t=\sigma\left(\{B(s,x), s\in[0,t],x\in\R^d\}\vphantom{\int_0^1}\right),\quad t\ge0.
\]
Its quadratic variations are denoted by 
\begin{equation}
\label{010805-20}
\begin{aligned}
&Q^\eps_\xi(t):=\la M_\eps(\cdot,\xi),M_\eps^*(\cdot,\xi)\ra_t=\frac{1}{(2\pi)^{d}}\int_0^t\int_{\R^d} |\hat\psi_\eps(s,\xi-p)|^2 \hat{R}(p) dpds,\\
&\mathscr{Q}^\eps_\xi(t):=\la
M_\eps(\cdot,\xi),M_\eps(\cdot,\xi)\ra_t\\
&
=-\frac{1}{(2\pi)^{d}}\int_0^t \int_{\R^d} \exp\left\{-\frac{is|p|^2}{\eps^2}\right\}\hat\psi_\eps(s,\xi-p) \hat\psi_\eps(s,\xi+p) \hat{R}(p) dpds.
\end{aligned}
\end{equation}

The following simple lemma holds.

\begin{lemma}\label{l.261}
We have
\begin{equation}
\label{030705-20}
\E[|\hat\psi_\eps(t,\xi)|^2]=\tilde{\sw}(t,\xi)e^{R(0)t},\quad (t,\xi)\in\bar\bbR_+\times\bbR^d.
\end{equation}
%and
%$\E[Q_\xi^\eps(t)]=$
\end{lemma}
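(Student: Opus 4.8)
The plan is to reduce the claim to the second-moment identity for the martingale $M_\eps(\cdot,\xi)$ that was isolated in \eqref{e.inteeq}--\eqref{010805-20}, and then to recognise that the resulting closed equation for $\E[|\hat\psi_\eps(t,\xi)|^2]$ coincides with the one solved by $\tilde{\sw}(t,\xi)e^{R(0)t}$.

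First I would set $h_\eps(t,\xi):=\E[|\hat\psi_\eps(t,\xi)|^2]$. Since, by \eqref{e.inteeq}, $M_\eps(\cdot,\xi)=\hat\psi_\eps(\cdot,\xi)-\hat\phi_0(\xi)$ is a square integrable martingale started at $0$ and $\hat\phi_0(\xi)$ is deterministic, we have $h_\eps(t,\xi)=|\hat\phi_0(\xi)|^2+\E[|M_\eps(t,\xi)|^2]$, the cross term dropping out because $\E[M_\eps(t,\xi)]=0$. Using the standard identity $\E[|M_\eps(t,\xi)|^2]=\E\big[\la M_\eps(\cdot,\xi),M_\eps^*(\cdot,\xi)\ra_t\big]=\E[Q^\eps_\xi(t)]$ for complex $L^2$ martingales, the expression for $Q^\eps_\xi$ in \eqref{010805-20}, and Tonelli's theorem (the integrand is nonnegative since $\hat R\ge0$), one arrives at the closed Volterra equation
\[
h_\eps(t,\xi)=|\hat\phi_0(\xi)|^2+\frac{1}{(2\pi)^d}\int_0^t\int_{\R^d}h_\eps(s,\xi-p)\,\hat R(p)\,dp\,ds,\qquad t\ge0.
\]
A Picard iteration of this identity, together with $\frac{1}{(2\pi)^d}\int_{\R^d}\hat R(p)\,dp=R(0)$ and $\hat\phi_0\in C_b(\R^d)$, yields the a priori bound $h_\eps(t,\xi)\le\|\hat\phi_0\|_{L^\infty}^2 e^{R(0)t}$, so $h_\eps$ is a locally bounded solution.

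It then remains to observe that $g(t,\xi):=\tilde{\sw}(t,\xi)e^{R(0)t}$ solves exactly the same equation with the same initial datum $|\hat\phi_0(\xi)|^2$. Indeed, rewriting \eqref{e.eqmomentum} as $\partial_t\tilde{\sw}+R(0)\tilde{\sw}=\frac{1}{(2\pi)^d}\int_{\R^d}\hat R(p)\tilde{\sw}(t,\xi-p)\,dp$ (again using $\frac{1}{(2\pi)^d}\int\hat R=R(0)$) and applying Duhamel's formula in $t$ shows that $e^{R(0)t}\tilde{\sw}(t,\xi)$ satisfies precisely the displayed Volterra equation. Since this linear equation has a bounded, nonnegative kernel of total mass $R(0)$, Gronwall's inequality applied to $|h_\eps-g|$ forces $h_\eps\equiv g$ among locally bounded functions, i.e. $\E[|\hat\psi_\eps(t,\xi)|^2]=\tilde{\sw}(t,\xi)e^{R(0)t}$, which is \eqref{030705-20}.

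The argument is essentially bookkeeping, and its only delicate points are (i) the identity $\E[|M_\eps(t,\xi)|^2]=\E[Q^\eps_\xi(t)]$, which is where one genuinely uses that $M_\eps(\cdot,\xi)$ is an $L^2$ martingale with the quadratic variation recorded in \eqref{010805-20} (this in turn rests on the It\^o form \eqref{e.mainfou}--\eqref{M-eps} and the white-in-time structure of the noise), and (ii) the a priori local boundedness of $h_\eps$ needed to justify Tonelli and the uniqueness step, which also follows from the Wiener chaos (Duhamel series) expansion of $\hat\psi_\eps$. Neither poses a real obstacle.
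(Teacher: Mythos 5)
Your proposal is correct and follows essentially the same route as the paper: the Itô isometry (equivalently, $\E[|M_\eps(t,\xi)|^2]=\E[Q^\eps_\xi(t)]$ together with the vanishing cross term) yields the closed Volterra equation for $\E[|\hat\psi_\eps(t,\xi)|^2]$, which is then identified with the equation solved by $\tilde{\sw}(t,\xi)e^{R(0)t}$ and concluded by uniqueness. The only difference is cosmetic — the paper checks that $\E[|\hat\psi_\eps(t,\xi)|^2]e^{-R(0)t}$ solves \eqref{e.eqmomentum} directly, whereas you conjugate in the other direction and spell out the Gronwall step.
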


\begin{proof}
By the It\^o isometry, we have 
\[
\E[|\hat\psi_\eps(t,\xi)|^2]=|\hat{\phi}_0(\xi)|^2+\int_0^t\int_{\R^d}\frac{\hat{R}(p)}{(2\pi)^d}
\E[|\hat\psi_\eps(s,\xi-p)|^2] dpds.
\]
It is straightforward to check that the deterministic function
$\E[|\hat\psi_\eps(t,\xi)|^2]e^{-R(0)t}$ solves \eqref{e.eqmomentum}
- the equation satisfied by $\tilde{\sw}(t,\xi)$.
 % - which we rewrite as 
% \[
% \partial_t \tilde{\sw}(t,\xi)=\int_{\R^d}\frac{\hat{R}(p)}{(2\pi)^d} \tilde{\wW}(t,\xi-p)dp-R(0)\tilde{\sW}(t,\xi),\quad\quad \tilde{\sW}(0,\xi)=|\hat{\phi}_0(\xi)|^2.
% \]
 By the uniqueness of the solution, we conclude \eqref{030705-20}. %that $\E[|\hat\psi_\eps(t,\xi)|^2]=\tilde{\sw}(t,\xi)e^{R(0)t}$. 
% This also implies
%\[
%\E[Q_\xi^\eps(t)]=\int_0^t\int_{\R^d}\frac{\hat{R}(p)}{(2\pi)^d} \E[|\psi_\eps(s,\xi-p)|^2] dpds=\,
%\]
%which completes the proof.
\end{proof}

\begin{remark}\label{r.heuristics}
Our goal is to show that $X_\xi^\eps(t,\eta)$ converges in distribution to $X_\xi(t,\eta)$, which solves the integral equation \eqref{e.limiteq}. Given \eqref{012305-20} and \eqref{e.inteeq}, it reduces to the convergence of 
\[
M_\eps(t,\xi+\eps^2\eta)\Rightarrow \int_0^t e^{\frac12R(0)s}B_\xi(ds,\eta).
\]
The limiting Gaussianity of the martingale comes from the self averaging of the quadratic variation. Take $\eta=0$ for example. Our proof shows that $\hat{\psi}_\eps(s,\xi_1)$ and $\hat{\psi}_\eps(s,\xi_2)$ becomes asymptotically independent due to the high oscillations of the wave field, for any $\xi_1\neq \xi_2$. Thus, the term
\[
\int_{\R^d} |\hat{\psi}_\eps(s,\xi-p)|^2\hat{R}(p)dp,
\] 
appearing in the integral expression of $Q_\xi^\eps(t)$,  behaves like
a sum of independent random variables. As a result, its limit, as
$\eps\to0$, is deterministic and given by
\[
\int_{\R^d} \bbE[ |\hat{\psi}_\eps(s,\xi-p)|^2]\hat{R}(p)dp=e^{R(0)s}\int_{\R^d} \tilde{\sw}(s,\xi-p) \hat{R}(p)dp,
\] 
due to Lemma \ref{l.261}. We refer to this phenomenon as  {\em self-averaging}.
 This also explains that in order to see some nontrivial correlation structure of $\hat{\psi}_\eps$, one needs to zoom in around a fixed $\xi$, which is why we consider the process $\{\hat{\psi}_\eps(t,\xi+\eps^2 \eta)\}_{(t,\eta)\in \bar{\R}_+\times\R^d}$.
\end{remark}

%\textcolor{red}{\bf DOTAD}
In what follows we shall use the following notation:
for any set $A$ and functions $f,g:A\to\bar \bbR_+$ we say that 
$$
f(a)\les g(a),\quad a\in A,
$$
if there exists $C>0$ independent of $\eps$ such that $f(a)\le Cg(a)$, $a\in A$.

\begin{lemma}\label{l.bdq1}
We have
% quadratic variation $\left(Q_\xi^\eps(t)\right)$ (cf
% \eqref{010805-20}) satisfies
\begin{equation}
\label{020805-20}
Q_\xi^\eps(t)\le  \frac{\|R\|_{L^1(\bbR^d)} }{\|\hat R\|_{L^1(\bbR^d)} }e^{R(0)t}\|\hat{\phi}_0\|_{L^2(\R^d)}^2,\quad
(t,\xi)\in\bar\bbR_+\times\bbR^d,\,\eps\in(0,1],\quad \bbP\,\mbox{a.s.}
\end{equation}
 In consequence, for any integer $n\ge 0$ and $T>0$ %, there exists $C(T,n)>0$ such
 that 
\begin{equation}
\label{030805-20}
\sup_{t\in[0,T],\xi\in\R^d,\eps\in(0,1]}\E[|M_\eps(t,\xi)|^{2n}]<+\infty.
\end{equation}
\end{lemma}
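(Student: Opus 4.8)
The plan is to bound $Q_\xi^\eps(t)$ pathwise using the $L^2$-conservation of the Schr\"odinger flow, and then deduce the moment bound \eqref{030805-20} from the Burkholder-Davis-Gundy inequality.

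First I would establish \eqref{020805-20}. Starting from the expression
\[
Q_\xi^\eps(t)=\frac{1}{(2\pi)^{d}}\int_0^t\int_{\R^d} |\hat\psi_\eps(s,\xi-p)|^2 \hat{R}(p)\, dp\, ds,
\]
I would use that $\hat R$ is nonnegative (being the Fourier transform of the positive-definite function $R$, which follows from the covariance structure, so $\hat R(p)\ge 0$ and $\|\hat R\|_{L^1}=\int \hat R(p)\,dp$) to bound $\hat{R}(p)\le \|\hat R\|_{\infty}$... but more efficiently, I would bound $\int_{\R^d}|\hat\psi_\eps(s,\xi-p)|^2\hat R(p)\,dp\le \|\hat R\|_{\infty}\|\hat\psi_\eps(s,\cdot)\|_{L^2(\R^d)}^2$ — wait, this needs care since $\hat\psi_\eps$ differs from $\hat\phi_\eps$ only by a phase and $R(0)t/2$ factor. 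By \eqref{comp}, $|\hat\psi_\eps(s,\xi)|^2=|\hat\phi_\eps(s,\xi)|^2 e^{R(0)s}$, and by the $L^2$-conservation of \eqref{e.maineq} (equivalently Plancherel applied to $\|\phi_\eps(s,\cdot)\|_{L^2}=\|\phi_0\|_{L^2}$), one has $\|\hat\phi_\eps(s,\cdot)\|_{L^2(\R^d)}^2=(2\pi)^d\|\phi_0\|_{L^2}^2=(2\pi)^d\|\hat\phi_0\|_{L^2}^2/(2\pi)^d$; I would keep careful track of the $(2\pi)^d$ normalization constants dictated by the convention $\hat f(\xi)=\int f(x)e^{-i\xi\cdot x}dx$. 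Then, since $\hat R(p)\ge 0$,
\[
\int_{\R^d}|\hat\psi_\eps(s,\xi-p)|^2\hat R(p)\,dp\le \Big(\sup_{p}\hat R(p)\Big)\,e^{R(0)s}\,(2\pi)^d\|\hat\phi_0\|_{L^2}^2,
\]
and integrating $s$ over $[0,t]$ and using $\int_0^t e^{R(0)s}ds\le \frac{e^{R(0)t}}{R(0)}$ would give a bound of the claimed shape; matching it to the stated constant $\|R\|_{L^1}/\|\hat R\|_{L^1}$ is a matter of using $\|R\|_{L^1}\ge R(0)\cdot(\text{something})$ — actually $\sup_p \hat R(p)=\hat R(0)=\int R=\|R\|_{L^1}$ when $R\ge 0$, and $R(0)=(2\pi)^{-d}\int\hat R(p)dp=(2\pi)^{-d}\|\hat R\|_{L^1}$, so the factor $\frac{1}{(2\pi)^d}\cdot\|R\|_{L^1}\cdot\frac{1}{R(0)}=\frac{\|R\|_{L^1}}{\|\hat R\|_{L^1}}$ appears exactly. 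So the key structural facts are: (i) $\hat R\ge 0$, (ii) $L^2$-conservation, (iii) the two identities $\hat R(0)=\|R\|_{L^1}$, $R(0)=(2\pi)^{-d}\|\hat R\|_{L^1}$.

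For \eqref{030805-20}, I would apply the Burkholder-Davis-Gundy inequality to the continuous $L^2$-martingale $M_\eps(\cdot,\xi)$. Writing $M_\eps=\mathrm{Re}\,M_\eps+i\,\mathrm{Im}\,M_\eps$, the total quadratic variation of the real and imaginary parts is controlled by $\langle M_\eps,M_\eps^*\rangle_t=Q_\xi^\eps(t)$ together with $|\mathscr{Q}_\xi^\eps(t)|$, and the latter is bounded by the same quantity since $|\hat\psi_\eps(s,\xi-p)\hat\psi_\eps(s,\xi+p)|\le \frac12(|\hat\psi_\eps(s,\xi-p)|^2+|\hat\psi_\eps(s,\xi+p)|^2)$ and $\hat R\ge 0$, leading via a change of variables to $|\mathscr{Q}_\xi^\eps(t)|\le Q_\xi^\eps(t)$ — hence also bounded by the deterministic constant in \eqref{020805-20}. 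Then BDG gives $\E[|M_\eps(t,\xi)|^{2n}]\le C_n\, \E\big[\big(Q_\xi^\eps(t)+|\mathscr Q_\xi^\eps(t)|\big)^n\big]\le C_n'\big(e^{R(0)T}\|\hat\phi_0\|_{L^2}^2\big)^n$, uniformly in $t\in[0,T]$, $\xi\in\R^d$, $\eps\in(0,1]$, which is \eqref{030805-20}.

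The main obstacle is not conceptual but bookkeeping: getting the pathwise bound \eqref{020805-20} requires carefully reconciling the Fourier normalization constants, the relation between $\hat\psi_\eps$ and $\hat\phi_\eps$, and the use of $L^2$-conservation to eliminate the $\xi$- and $\eps$-dependence uniformly; and, in passing to the moment bound, one must be slightly careful that the complex martingale's bracket is genuinely dominated (in the matrix sense for the $(\mathrm{Re},\mathrm{Im})$ pair) by the scalar quantities $Q_\xi^\eps$ and $\mathscr Q_\xi^\eps$ before invoking a vector-valued BDG inequality.
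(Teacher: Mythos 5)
Your proposal is correct and follows essentially the same route as the paper: bound the integrand of $Q_\xi^\eps$ using $\sup_p\hat R(p)\le\|R\|_{L^1(\bbR^d)}$, the identity $|\hat\psi_\eps(s,\cdot)|^2=e^{R(0)s}|\hat\phi_\eps(s,\cdot)|^2$ together with $L^2$-conservation, integrate $e^{R(0)s}$ in time using $R(0)=(2\pi)^{-d}\|\hat R\|_{L^1(\bbR^d)}$, and then invoke Burkholder--Davis--Gundy for \eqref{030805-20}. Your extra care with $\mathscr{Q}_\xi^\eps$ is harmless but unnecessary, since for a complex martingale $\la M_\eps,M_\eps^*\ra_t$ already equals the sum of the quadratic variations of the real and imaginary parts, so $Q_\xi^\eps$ alone suffices for the BDG step.
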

\begin{proof}
Using the
Burkholder-Davis-Gundy inequality we conclude that
\[
\E[|M_\eps(t,\xi)|^{2n}]\leq C\E[Q_\xi^\eps(t)^n],\quad \,(t,\xi)\in\bar\bbR_+\times\bbR^d,\,\eps\in(0,1],
\] 
where the positive constant $C$ is independent of $\eps,t,\xi$.
Estimate  \eqref{030805-20} follows then directly from
\eqref{020805-20}.

To show \eqref{020805-20},
we use the fact that $R\in L^1(\R^d)$ and obtain
\begin{equation}
\label{040805-20}
\begin{aligned}
&\int_{\R^d} \frac{\hat{R}(p)}{(2\pi)^d} |\hat\psi_\eps(s,\xi-p)|^2 dp \le \frac{\|R\|_{L^1(\bbR^d)}}{(2\pi)^d}  \int_{\R^d} |\hat\psi_\eps(s,\xi-p)|^2 dp\\
&=\frac{\|R\|_{L^1(\bbR^d)} e^{R(0)s}}{(2\pi)^d}  \int_{\R^d} |\hat{\phi}_\eps(s,p)|^2 dp=\frac{\|R\|_{L^1(\bbR^d)} e^{R(0)s}}{(2\pi)^d}  \|\hat{\phi}_0\|_{L^2(\R^d)}^2,\quad (s,\xi)\in\bbR_+\times\bbR^d,
\end{aligned}
\end{equation}
where in the last step we used the conservation of the $L^2$ norm of
the solution of the Schr\"odinger equation. Estimate \eqref{020805-20}
is then a direct consequence of the first
formula of \eqref{010805-20}.
%  we conclude the deterministic bound:
% \[
% Q_\xi^\eps(t) \les \int_0^t e^{R(0)s} ds \|\hat{\phi}_0\|_{L^2(\R^d)}^2.
% \]
% It remains to apply BDG inequality to the martingale $M_\eps(\cdot,\xi)$ which gives
\end{proof}

From \eqref{e.inteeq} and \eqref{030805-20} we immediately conclude
that following.
\begin{corollary}
For any integer $n\ge0$ and $T>0$ we have %there exists $C(T,n)>0$ such
 that 
\label{c.bdpsieps}
\[
\sup_{t\in[0,T],\xi\in\R^d,\eps\in(0,1]}\E[|\hat\psi_\eps(t,\xi)|^{2n}]<+\infty.
\]
\end{corollary}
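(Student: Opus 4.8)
The plan is to combine the decomposition \eqref{e.inteeq} with the uniform moment bound \eqref{030805-20} via a convexity inequality. Since $\hat\psi_\eps(t,\xi)=\hat{\phi}_0(\xi)+M_\eps(t,\xi)$, I would apply the elementary bound $|a+b|^{2n}\le 2^{2n-1}(|a|^{2n}+|b|^{2n})$ with $a=\hat{\phi}_0(\xi)$ and $b=M_\eps(t,\xi)$, and then take expectations:
\[
\E\big[|\hat\psi_\eps(t,\xi)|^{2n}\big]\le 2^{2n-1}\Big(|\hat{\phi}_0(\xi)|^{2n}+\E\big[|M_\eps(t,\xi)|^{2n}\big]\Big).
\]
The first term on the right is bounded uniformly in $\xi$ because, by assumption, $\hat{\phi}_0\in C_b(\R^d)$, so $|\hat{\phi}_0(\xi)|^{2n}\le\|\hat{\phi}_0\|_{C_b(\R^d)}^{2n}$ for all $\xi\in\R^d$. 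The second term is handled by \eqref{030805-20}, which gives a finite supremum over $t\in[0,T]$, $\xi\in\R^d$, and $\eps\in(0,1]$.

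Taking the supremum over $t\in[0,T]$, $\xi\in\R^d$, and $\eps\in(0,1]$ on both sides then yields
\[
\sup_{t\in[0,T],\,\xi\in\R^d,\,\eps\in(0,1]}\E\big[|\hat\psi_\eps(t,\xi)|^{2n}\big]\le 2^{2n-1}\Big(\|\hat{\phi}_0\|_{C_b(\R^d)}^{2n}+\sup_{t\in[0,T],\,\xi\in\R^d,\,\eps\in(0,1]}\E\big[|M_\eps(t,\xi)|^{2n}\big]\Big)<+\infty,
\]
which is the assertion of the corollary. (The case $n=0$ is trivial, and the case $n=1$ is already contained in Lemma \ref{l.261} together with the boundedness of $\tilde{\sw}(t,\xi)e^{R(0)t}$ on $[0,T]\times\R^d$, consistent with the above.)

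There is no real obstacle here: the statement is a direct corollary, and the only inputs are the linear representation \eqref{e.inteeq}, the hypothesis $\hat{\phi}_0\in C_b(\R^d)$, and the previously established uniform martingale moment bound \eqref{030805-20} (whose own proof rests on the pathwise bound \eqref{020805-20} for the quadratic variation and the Burkholder--Davis--Gundy inequality). If anything requires a word of care, it is merely noting that the constant $2^{2n-1}$ and the bounds involved are independent of $\eps$, so that the supremum over $\eps\in(0,1]$ is preserved; this is immediate from the corresponding uniformity already present in Lemma \ref{l.bdq1}.
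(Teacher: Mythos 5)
Your proof is correct and is exactly the argument the paper intends: the corollary is stated as an immediate consequence of the decomposition \eqref{e.inteeq}, the boundedness of $\hat{\phi}_0$, and the uniform martingale moment bound \eqref{030805-20}, which is precisely what you spell out via the elementary convexity inequality. No issues.
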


%\begin{proof}
%We have $\E[|\psi_\eps(t,\xi)|^{2n}]\les |\hat{\phi}_0(\xi)|^{2n}+\E[|M_\eps(t,\xi)|^{2n}]$. The second term is bounded by the BDG inequality:
%\[
%\E[|M_\eps(t,\xi)|^{2n}] \les \E[Q_\xi^\eps(t)^n] \les e^{nR(0)t}
%\]
%where in the last step we applied Lemma~\ref{l.bdq1}.
%\end{proof}

For $\xi\in\R^d$  fixed, we use the notation:
\[
{\cal M}_{\eps}(t,\eta):=M_\eps(t,\xi+\eps^2 \eta),\quad (t,\eta)\in\bar\bbR_+\times\R^d.
\]
The proof of Theorem~\ref{t.mainth} reduces to showing the convergence
in the law of  $\{{\cal M}_{\eps}(t,\eta)\}_{(t,\eta)\in\bar\bbR_+\times\R^d}$ over $C(\bar\bbR_+\times\R^d)$.

\subsection{Tightness}

The main result of the present section is the following estimate.
%In light of Lemma~\ref{l.bdq1}, we will use the Kolmogorov criterion to prove the tightness of $\{M_\eps\}_{\eps>0}$ as a process in $(t,\eta)$.
\begin{proposition}\label{p.tight}
For any $T>0$, $\xi\in\bbR^d$ and an integer $n\geq1$, there exists a
constant $C(T,n)>0$ such that
 \begin{equation}
\label{040805-20}
\E[|{\cal M}_{\eps} (t_1,\eta_1)-{\cal M}_{\eps} (t_2,\eta_2)|^{2n}] \leq C(T,n)(|t_1-t_2|^n+|\eta_1-\eta_2|^{2n})
\end{equation}
for all $t_1,t_2\in [0,T],$ $\eta_1,\eta_2\in\R^d$, $\eps\in(0,1]$.
\end{proposition}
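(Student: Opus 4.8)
The plan is to bound the $2n$-th moment of the increment by controlling separately the increment in the time variable and in the $\eta$ variable, and then combining. Write $\mathcal{M}_\eps(t_1,\eta_1)-\mathcal{M}_\eps(t_2,\eta_2) = [\mathcal{M}_\eps(t_1,\eta_1)-\mathcal{M}_\eps(t_2,\eta_1)] + [\mathcal{M}_\eps(t_2,\eta_1)-\mathcal{M}_\eps(t_2,\eta_2)]$ and treat the two brackets separately, using $|a+b|^{2n}\lesssim |a|^{2n}+|b|^{2n}$.

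For the time increment, with $\eta$ fixed, $t\mapsto \mathcal{M}_\eps(t,\eta)=M_\eps(t,\xi+\eps^2\eta)$ is a continuous square-integrable martingale, so by Burkholder--Davis--Gundy the moment $\E[|\mathcal{M}_\eps(t_1,\eta)-\mathcal{M}_\eps(t_2,\eta)|^{2n}]$ is controlled by $\E[(Q^\eps_{\xi+\eps^2\eta}(t_1)-Q^\eps_{\xi+\eps^2\eta}(t_2))^n]$. From the explicit form of $Q^\eps$ in \eqref{010805-20}, the increment $Q^\eps_{\xi+\eps^2\eta}(t_1)-Q^\eps_{\xi+\eps^2\eta}(t_2)$ is an integral over $[t_2,t_1]\times\R^d$ of $(2\pi)^{-d}|\hat\psi_\eps(s,\cdot)|^2\hat R(p)$, and the bound in the proof of Lemma~\ref{l.bdq1} (specifically \eqref{040805-20}) shows the inner integral is bounded by $\|R\|_{L^1}\|\hat R\|_{L^1}^{-1}e^{R(0)s}\|\hat\phi_0\|_{L^2}^2$ uniformly in $\eps$; hence $0\le Q^\eps_{\xi+\eps^2\eta}(t_1)-Q^\eps_{\xi+\eps^2\eta}(t_2)\le C(T)|t_1-t_2|$ almost surely, giving the $|t_1-t_2|^n$ term directly with a deterministic constant.

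For the $\eta$ increment, with $t=t_2$ fixed, I would write $\mathcal{M}_\eps(t,\eta_1)-\mathcal{M}_\eps(t,\eta_2)$ as a single stochastic integral against $\hat B$ using \eqref{M-eps}, whose integrand is
\[
\frac{1}{i(2\pi)^d}\Big(e^{i(2(\xi+\eps^2\eta_1)\cdot p-|p|^2)\frac{s}{2\eps^2}}\hat\psi_\eps(s,\xi+\eps^2\eta_1-p) - e^{i(2(\xi+\eps^2\eta_2)\cdot p-|p|^2)\frac{s}{2\eps^2}}\hat\psi_\eps(s,\xi+\eps^2\eta_2-p)\Big).
\]
Again BDG reduces matters to the $n$-th moment of the quadratic variation of this difference, which after expanding the modulus squared splits into a phase-difference part (where the $\hat\psi_\eps$ factors agree but the exponentials differ) and a $\hat\psi_\eps$-difference part (where the exponentials agree). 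In the first part the phase difference is $e^{i\eta_1\cdot p s}-e^{i\eta_2\cdot p s}$, which is bounded by $|\eta_1-\eta_2|\,|p|\,s$; combined with $\hat R(p)$ (Schwartz, so $|p|\hat R(p)\in L^1$) and the moment bound $\sup\E[|\hat\psi_\eps|^{2n}]<\infty$ from Corollary~\ref{c.bdpsieps}, this yields a contribution of order $|\eta_1-\eta_2|^{2n}$. The second part requires an increment estimate on $\xi'\mapsto\hat\psi_\eps(s,\xi')$: from \eqref{e.inteeq} and \eqref{M-eps}, $\hat\psi_\eps(s,\xi')-\hat\psi_\eps(s,\xi'')$ equals $\hat\phi_0(\xi')-\hat\phi_0(\xi'')$ (Lipschitz, giving an $O(\eps^2|\eta_1-\eta_2|)$ term) plus a martingale increment whose quadratic variation again involves $\hat R(p)$ times the previous level of $\hat\psi_\eps$-differences — so one closes this by a Gronwall-type iteration on the order $n$ and the time $T$, using the Lipschitz bound on $\hat\phi_0$ at the base and the uniform moment bounds throughout.

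The main obstacle is the last step: establishing a quantitative modulus-of-continuity estimate of the form $\E[|\hat\psi_\eps(s,\xi')-\hat\psi_\eps(s,\xi'')|^{2n}]\lesssim |\xi'-\xi''|^{2n} + (\text{lower order in }|\xi'-\xi''|)$ uniformly in $\eps$, because the oscillatory factor $e^{i(2\xi'\cdot p-|p|^2)s/(2\eps^2)}$ blows up as $\eps\to0$ and cannot be differentiated in $\xi'$ naively. The resolution is that one never differentiates the phase: in the quadratic variation the conjugate pair produces $e^{i(\eta_1-\eta_2)\cdot p s}$ with the bad $\eps^{-2}$ part cancelling, so only the tame factor $|\eta_1-\eta_2||p|s$ survives, and the recursion on the chaos/Duhamel expansion of $\hat\psi_\eps$ stays uniform in $\eps$. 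Once this recursive estimate is in place, assembling the two pieces and applying the Kolmogorov--Chentsov criterion gives both tightness in $C(\bar\bbR_+\times\R^d)$ and the stated bound \eqref{040805-20}.
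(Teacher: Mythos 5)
Your proposal is correct and follows essentially the same route as the paper: split into time and $\eta$ increments, apply Burkholder--Davis--Gundy, decompose the $\eta$-increment integrand into a phase-difference part (where the $\eps^{-2}$ cancels, leaving $|e^{i\eta_1\cdot ps}-e^{i\eta_2\cdot ps}|\lesssim|\eta_1-\eta_2||p|s$) and a $\hat\psi_\eps$-difference part closed by the Lipschitz bound on $\hat\phi_0$ and a Gronwall iteration after taking a supremum over $\xi$. The only cosmetic difference is that the paper uses $\sup_p|p|^2\hat R(p)<\infty$ together with $L^2$-conservation for the phase part, whereas you use $|p|^2\hat R(p)\in L^1$ with the uniform moment bounds of Corollary~\ref{c.bdpsieps}; both work.
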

From \eqref{030805-20} for any $T>0$, $\xi\in\bbR^d$ and $n\geq1$ we have
\begin{equation}
\label{012705-20}
\sup_{t\in[0,T],\eta\in\R^d,\eps\in(0,1]}\bbE[|{\cal
  M}_{\eps}(t,\eta)|^{2n}] <+\infty.
\end{equation}
As a consequence of the Kolmogorov tightness criterion for continuous
random fields, see \cite[Theorem 1.4.7, p. 38]{kunita},
\eqref{040805-20} and \eqref{012705-20} imply tightness of  the laws of 
$\{{\cal M}_{\eps} (t,\eta)\}_{(t,\eta)\in\bar\bbR_+\times\R^d}$, $\eps\in(0,1]$ over
$C(\bar\bbR_+\times\R^d)$, equipped with the standard Fr\'{e}chet topology.

%\end{proof}
To show the proposition we shall need the following.
\begin{lemma}\label{l.macon}
For any $T>0$, $\xi\in\bbR^d$ and an integer $n\geq1$, there exists a
constant $C(T,n)>0$ such that
\begin{equation}
\label{021205-20}
\E[|{\cal M}_{\eps} (t,\eta_1)-{\cal M}_{\eps} (t,\eta_2)|^{2n}]\leq C(n,T)|\eta_1-\eta_2|^{2n}
\end{equation}
for all $t\in [0,T],$ $\eta_1,\eta_2\in\R^d$, $\eps\in(0,1]$ and
\begin{equation}
\label{021205-20a}
\E[|{\cal M}_{\eps} (t_1,\eta)-{\cal M}_{\eps} (t_2,\eta)|^{2n}] \leq C(n,T)|t_2-t_1|^n
\end{equation}
for all $t_1,t_2\in [0,T],$ $\eta\in\R^d$, $\eps\in(0,1]$.
\end{lemma}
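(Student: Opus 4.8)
The plan is to estimate the two martingale increments separately, using the Burkholder--Davis--Gundy inequality to reduce each to a bound on the corresponding quadratic variation, and then to exploit the explicit formulas in \eqref{010805-20}. For \eqref{021205-20}, write $\mathcal{M}_\eps(t,\eta_1)-\mathcal{M}_\eps(t,\eta_2)=M_\eps(t,\xi+\eps^2\eta_1)-M_\eps(t,\xi+\eps^2\eta_2)$; this difference is itself a continuous square-integrable martingale in $t$, given by the stochastic integral \eqref{M-eps} with integrand
\[
\exp\!\left\{i(2(\xi+\eps^2\eta_1)\cdot p-|p|^2)\tfrac{s}{2\eps^2}\right\}\hat\psi_\eps(s,\xi+\eps^2\eta_1-p)-\exp\!\left\{i(2(\xi+\eps^2\eta_2)\cdot p-|p|^2)\tfrac{s}{2\eps^2}\right\}\hat\psi_\eps(s,\xi+\eps^2\eta_2-p).
\]
By BDG, $\E[|\mathcal{M}_\eps(t,\eta_1)-\mathcal{M}_\eps(t,\eta_2)|^{2n}]\lesssim \E\big[\langle\,\cdot\,\rangle_t^n\big]$, where the quadratic variation is $(2\pi)^{-d}\int_0^t\int_{\R^d}|\,\cdot\,|^2\hat R(p)\,dp\,ds$ with $|\,\cdot\,|^2$ the modulus squared of the integrand above. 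I would split that modulus into two contributions: one from the difference of the two exponential phases (with $\hat\psi_\eps$ at the same frequency), which is $O(|\eps^2(\eta_1-\eta_2)\cdot p|\,s/\eps^2)=O(s|\eta_1-\eta_2|\,|p|)$ and is controlled on $[0,T]$ since $|p|^k\hat R(p)\in L^1$; and one from the difference $\hat\psi_\eps(s,\xi+\eps^2\eta_1-p)-\hat\psi_\eps(s,\xi+\eps^2\eta_2-p)$, which needs an a priori Lipschitz-in-frequency estimate on $\hat\psi_\eps$.

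That Lipschitz estimate is the crux of the argument and the step I expect to be the main obstacle. From \eqref{e.inteeq} and \eqref{M-eps}, the frequency-increment $D_\eps(s,\xi_1,\xi_2):=\hat\psi_\eps(s,\xi_1)-\hat\psi_\eps(s,\xi_2)$ satisfies a martingale integral equation: one contribution from $\hat\phi_0(\xi_1)-\hat\phi_0(\xi_2)$, bounded by $\|\hat\phi_0\|_{\mathrm{Lip}}|\xi_1-\xi_2|$; a contribution from the phase difference $\exp\{i(2\xi_1\cdot p-|p|^2)s/2\eps^2\}-\exp\{i(2\xi_2\cdot p-|p|^2)s/2\eps^2\}$, of size $\lesssim (s/\eps^2)|\xi_1-\xi_2||p|$; and a self-referential term involving $D_\eps(s,\xi_1-p,\xi_2-p)$. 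The dangerous $\eps^{-2}$ in the phase-difference term is exactly compensated by the scaling $\xi_j=\xi+\eps^2\eta_j$, so that $(s/\eps^2)|\xi_1-\xi_2|=s|\eta_1-\eta_2|$ — this is why the proposition is stated for $\mathcal{M}_\eps$ rather than for $M_\eps$ at generic frequencies. After taking $2n$-th moments, applying BDG to the martingale term, and using $\E[|\hat\psi_\eps(s,\cdot)|^{2n}]$ bounded uniformly (Corollary \ref{c.bdpsieps}), one obtains a Gronwall-type inequality for $g(s):=\sup_{\xi_1-\xi_2=\xi_1^0-\xi_2^0}\E[|D_\eps(s,\xi_1,\xi_2)|^{2n}]$ (keeping the frequency gap fixed, since the convolution preserves it), of the form $g(t)\lesssim |\eta_1-\eta_2|^{2n}+\int_0^t g(s)\,ds$, yielding $g(t)\lesssim_{T,n}|\eta_1-\eta_2|^{2n}$ on $[0,T]$. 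Feeding this back into the quadratic-variation bound from the previous paragraph gives \eqref{021205-20}.

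For the time-increment bound \eqref{021205-20a}, assume $t_1<t_2$; then $\mathcal{M}_\eps(t_2,\eta)-\mathcal{M}_\eps(t_1,\eta)$ is the increment over $[t_1,t_2]$ of the martingale $t\mapsto M_\eps(t,\xi+\eps^2\eta)$, so by BDG it suffices to bound $\E\big[(Q^\eps_{\xi+\eps^2\eta}(t_2)-Q^\eps_{\xi+\eps^2\eta}(t_1))^n\big]$. From the first line of \eqref{010805-20}, $Q^\eps_{\xi+\eps^2\eta}(t_2)-Q^\eps_{\xi+\eps^2\eta}(t_1)=(2\pi)^{-d}\int_{t_1}^{t_2}\int_{\R^d}|\hat\psi_\eps(s,\xi+\eps^2\eta-p)|^2\hat R(p)\,dp\,ds$, and the pointwise-in-$s$ estimate \eqref{040805-20} (conservation of the $L^2$ norm) bounds the inner integral by $\|R\|_{L^1}(2\pi)^{-d}e^{R(0)s}\|\hat\phi_0\|_{L^2}^2$, uniformly in $\eps$ and $\eta$. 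Hence the increment is $\le C(T)|t_2-t_1|$ pointwise, $\bbP$-a.s., and raising to the $n$-th power gives \eqref{021205-20a} — no Gronwall argument is needed here, only the a priori bound already established in Lemma \ref{l.bdq1}. Finally, the lemma is assembled by combining \eqref{021205-20} and \eqref{021205-20a} via the elementary inequality $|a+b|^{2n}\lesssim |a|^{2n}+|b|^{2n}$ applied to $\mathcal{M}_\eps(t_1,\eta_1)-\mathcal{M}_\eps(t_2,\eta_2)=[\mathcal{M}_\eps(t_1,\eta_1)-\mathcal{M}_\eps(t_1,\eta_2)]+[\mathcal{M}_\eps(t_1,\eta_2)-\mathcal{M}_\eps(t_2,\eta_2)]$, which yields Proposition \ref{p.tight}.
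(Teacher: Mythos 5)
Your proposal is correct and follows essentially the same route as the paper: the same decomposition of the increment into a phase-difference term (controlled by $|e^{i\theta_1}-e^{i\theta_2}|\lesssim|\theta_1-\theta_2|$, the decay of $\hat R$, and $L^2$ conservation) and a $\hat\psi_\eps$-difference term closed by the Lipschitz bound on $\hat\phi_0$ and a Gronwall inequality in which the $\eps^{-2}$ in the phase is cancelled by the $\eps^2\eta$ scaling; your Gronwall quantity $D_\eps$ differs from the paper's $f_\eps$ only by the initial-data increment. Your direct treatment of the time increment via the a priori quadratic-variation bound is also exactly what the paper's ``analogous argument'' amounts to.
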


\begin{proof}
We prove \eqref{021205-20}. The argument for \eqref{021205-20a} is analogous.
The difference is written  as 
\begin{align}
\label{050805-20}
&f_\eps(t,\xi,\eta_1,\eta_2)
:=M_{\eps}  (t,\xi+\eps^2\eta_1)-M_{\eps}  (t,\xi+\eps^2\eta_2)\notag\\
&=\int_0^t\int_{\R^d}
\frac{\hat{B}(ds,dp)}{i(2\pi)^{d}}\left[\exp\left\{i\left[2(\xi+\eps^2
      \eta_1)\cdot
      p-|p|^2\right]\frac{s}{2\eps^2}\right\}-\exp\left\{i\left[2(\xi+\eps^2
      \eta_2)\cdot p-|p|^2\right]\frac{s}{2\eps^2}\right\}\right]\notag\\
&
\times\hat\psi_\eps(s,\xi+\eps^2 \eta_1-p)\\
+&\int_0^t\int_{\R^d} \frac{\hat{B}(ds,dp)}{i(2\pi)^{d}}\exp\left\{i\left[2(\xi+\eps^2
      \eta_2)\cdot p-|p|^2\right]\frac{s}{2\eps^2}\right\}[\hat\psi_\eps(s,\xi+\eps^2 \eta_1-p)-\hat\psi_\eps(s,\xi+\eps^2 \eta_2-p)]\notag\\
=&A_1+A_2, \notag
\end{align}
where $A_1$, $A_2$ denote the two integral terms appearing in \eqref{050805-20}.

%\textcolor{red}{\bf DOTAD}

\subsubsection*{Estimates of $A_1$}
Using the Burkholder-Davis-Gundy inequality and an elementary inequality $1-\cos
x\les x^2$, we can write 
\begin{equation}
\label{011205-20}
\begin{aligned}
\E[|A_1|^{2n}] \les& \E\left[  \left(\int_0^t \int_{\R^d} \hat{R}(p)[1-\cos((\eta_1-\eta_2)\cdot ps)] |\hat\psi_\eps(s,\xi+\eps \eta_1-p)|^2 dpds\right)^n\right]\\
\les & |\eta_1-\eta_2|^{2n}\E\left[
  \left(\int_0^t\int_{\R^d}|p|^2s^2\hat{R}(p)|\hat\psi_\eps(s, \xi+\eps \eta_1-p)|^2
    dpds\right)^n\right].
\end{aligned}
\end{equation}
Thanks to the fact that
$\sup_{p\in\bbR^d}|p|^2\hat{R}(p)<+\infty$ and the conservation of the $L^2$ norm of
the solution of the Schr\"odinger equation, the right hand
side of \eqref{011205-20} can be estimated by an expression of the order
$
%\big(|\eta_1-\eta_2|^{2}\red{t^2e^{R(0)t}}\|\hat\phi_0\|^{2}_{L^2(\bbR^d)}\big)^n.
|\eta_1-\eta_2|^{2n}.
$

To abbreviate  the notation we shall write
$\|X\|_{p}:=\left(\bbE|X|^p\right)^{1/p}$. Concerning $A_2$, again by the Burkholder-Davis-Gundy inequality, we have
\[
\E[|A_2|^{2n}]
\les\left\|\left(\int_0^t\int_{\R^d}\hat{R}(p)|\hat\psi_\eps(s,\xi+\eps
    \eta_1-p)-\hat\psi_\eps(s,\xi+\eps \eta_2-p)|^2dpds\right)\right\|^n_{n}.
\]
By the triangle inequality the right hand side is estimated by
\[
\left\{\int_0^t\int_{\R^d}\hat{R}(p)\|\hat\psi_\eps(s,\xi+\eps
    \eta_1-p)-\hat\psi_\eps(s,\xi+\eps
    \eta_2-p)\|_{2n}^{2}dpds\right\}^n.
\]
Invoking  \eqref{e.inteeq} we can further estimate this term by
an expression 
\[
\begin{aligned}
 &\left(\int_0^t\int_{\R^d}\hat{R}(p)[|\hat{\phi}_0(\xi+\eps\eta_1-p)-\hat{\phi}_0(\xi+\eps \eta_2-p)|^2+\|f_\eps(s,\xi-p,\eta_1,\eta_2)\|^2_{2n}]dpds\right)^n\\
\les &\eps^{2n}|\eta_1-\eta_2|^{2n}+\int_0^t\int_{\R^d} \hat{R}(p)\|f_\eps(s,\xi-p,\eta_1,\eta_2)\|_{2n}^{2n}dpds,
\end{aligned}
\] 
cf \eqref{050805-20} for the definition of $f_\eps(\cdot)$. In the
last inequality we have used the Lipschitz regularity of the initial
data and the Jensen inequality.

Combining the estimates for $A_1,A_2$, we reach the integral inequality
\[
\|f_\eps(t,\xi,\eta_1,\eta_2)\|_{2n}^{2n} \leq C|\eta_1-\eta_2|^{2n}+C\int_0^t\int_{\R^d} \hat{R}(p)\|f_\eps(s,\xi-p,\eta_1,\eta_2)\|_{2n}^{2n}dpds
\]
for all $t\in[0,T],\xi,\eta_1,\eta_2\in\R^d$, where the constant $C$
only depends on $T,n$. Taking the supremum over $\xi$ in both sides
of the inequality and invoking the Gronwall inequality we conclude the lemma.
\end{proof}

\subsubsection*{Proof of Proposition \ref{p.tight}}
Note that 
\[
|{\cal M}_{\eps}(t_1,\eta_1)-{\cal M}_{\eps} (t_2,\eta_2)|^{2n} \les |{\cal M}_{\eps} (t_1,\eta_1)-{\cal M}_{\eps}  (t_2,\eta_1)|^{2n}+|{\cal M}_{\eps}(t_2,\eta_1)-{\cal M}_{\eps}(t_2,\eta_2)|^{2n}.
\]
The conclusion of the proposition is then a straightforward consequence of
Lemma \ref{l.macon}.
\qed

\subsection{Convergence of finite dimensional distributions - limit identification}

% Recall that $X_\xi(t,\eta)$ is
%  the solution to 
% \[
% dX_\xi(t,\eta)=-\tfrac12R(0)X_\xi(t,\eta) dt+dB_\xi(t,\eta), \quad X_\xi(0,\eta)=\hat{\phi}_0(\xi),
% \]

% The solution is written explicitly as 
% \[
% X_\xi(t,\eta)=\hat{\phi}_0(\xi)e^{-\frac12R(0)t}+\int_0^t e^{-\frac12R(0)(t-s)}dB_\xi(s,\eta).
% \]
Using \eqref{comp} and \eqref{e.inteeq}, we can write the compensated
wave function, see \eqref{comp:wave} and \eqref{012305-20}, as 
\begin{equation}
\label{X-eps}
X^\eps_\xi(t,\eta)=e^{-\frac12R(0)t}\hat{\phi}_0(\xi+\eps^2
\eta)+e^{-\frac12R(0)t}{\cal M}_\eps(t,\eta).
\end{equation}
Recall that
${\cal M}_{\eps}(t,\eta)=M_\eps(t,\xi+\eps^2 \eta)$, with $\xi$ 
fixed, and $M_\eps(t,\xi)$ defined in  \eqref{M-eps}.
To complete the proof of Theorem~\ref{t.mainth}, given the tightness
proved in the previous section, we only need to show the convergence
of finite dimensional distributions of the above field. For any integer $N\ge 1$ and $\{\eta_j\}_{j=1}^N$, we will show in this section that 
\begin{equation}\label{e.conmartingale}
({\cal M}_{\eps}(t,\eta_1),\ldots, {\cal
  M}_{\eps}(t,\eta_N))\Rightarrow (Y
(t,\eta_1),\ldots,Y(t,\eta_N)),\quad \mbox{as }\eps\to0,
\end{equation}
in distribution in $C([0,\infty);\mathbb{C}^{N})$, with 
\[
Y(t,\eta):=\int_0^t e^{\frac12R(0)s} B_\xi(ds,\eta),
\]
where $B_\xi(\cdot)$ is defined in \eqref{010405-20}. The conclusion of the
theorem is then a consequence of formula \eqref{e.limiteq}.

% \begin{equation*}
% M_\eps(t,\xi)=\frac{1}{i(2\pi)^{d}}\int_0^t\int_{\R^d}
% \exp\left\{i\big(2\xi\cdot p-|p|^2\big)\frac{s}{2\eps^2}\right\}\hat\psi_\eps(s,\xi-p) \hat{B}(ds,dp).
% \end{equation*}

For $j,k=1,\ldots,N$, we define
\begin{equation}
\label{qjk}
\begin{aligned}
Q_{\eps,j,k}(t) :=\la {\cal M}_{\eps}(\cdot,\eta_j), {\cal M}_{\eps}^*(\cdot,\eta_k)\ra(t)=\int_0^t\int_{\R^d} \frac{\hat{R}(p)}{(2\pi)^d}
e^{i(\eta_j-\eta_k)\cdot ps}h_\eps^{j,k}(s,\xi-p) dpds
\end{aligned}
\end{equation}
and, 
\begin{equation}
\label{sqjk}
\begin{aligned}
\quad \mathscr{Q}_{\eps,j,k}(t) :=&\la {\cal M}_{\eps}(\cdot,\eta_j), {\cal M}_{\eps}(\cdot,\eta_k)\ra(t)\\
=&-\int_0^t\int_{\R^d} \frac{\hat{R}(p)}{(2\pi)^d} \exp\left\{i\left(\eta_j-\eta_k-\frac{p}{\eps^2}\right)\cdot p s\right\} g_\eps^{j,k}(s,\xi,p) dpds,
\end{aligned}
\end{equation}
where
\begin{align}
&
h_\eps^{j,k}(t,\xi):=\hat\psi_\eps(t,\xi+\eps^2\eta_j) \hat\psi_\eps^*(t,\xi+\eps^2
  \eta_k),\label{hjk}\\
&
g_\eps^{j,k}(t,\xi,p):=\hat\psi_\eps(t,\xi-p+\eps^2\eta_j) \hat\psi_\eps(t,\xi+p+\eps^2
  \eta_k). \label{gjk}
\end{align}

From \eqref{cyl} we also know that $\la Y
(\cdot,\eta_j),Y(\cdot,\eta_k)\ra(t)=0$ and
\begin{equation}\label{e.yjk1}
\begin{aligned}
\la Y (\cdot,\eta_j),Y^*(\cdot,\eta_k)\ra(t)=&\int_0^t e^{R(0)s}d\la B_\xi(\cdot,\eta_j),B_\xi^*(\cdot,\eta_k)\ra(s)\\
=&\int_0^t \int_{\bbR^d}e^{R(0)s}e^{-i(\eta_j-\eta_k)\cdot y}\sU(s,y+\xi
s,\xi)dy ds.
\end{aligned}
\end{equation}
Using 
\eqref{e.cov3} and \eqref{020605-20} we can further write
\begin{equation}\label{e.yjk}
\begin{aligned}
&\la Y (\cdot,\eta_j),Y^*(\cdot,\eta_k)\ra(t)\\
=&\sum_{n\geq 1}\int_{[0,t]_<^n}\int_{\R^{nd}} \prod_{\ell=1}^n \frac{\hat{R}(p_\ell)e^{i(\eta_j-\eta_k)\cdot p_\ell s_\ell}}{(2\pi)^d}|\hat{\phi}_0(\xi-p_1-\ldots-p_n)|^2d\mathbf{p}_{1,n}d\mathbf{s}_{1,n}.
\end{aligned}
\end{equation}
%By the property of $B_\xi(s,\eta)$, we know that 
%\[
%\begin{aligned}
%&Q_{j,k}(t):= \la Y_{\eta_j},Y^*_{\eta_k}\ra(t)=\\
% &\Q_{j,k}(t):=\la Y_{\eta_j},Y_{\eta_k}\ra(t)=0
%\end{aligned}
%\]
By Theorem IX.3.21 of \cite{jacod},  the proof of
\eqref{e.conmartingale} reduces to the following proposition.
\begin{proposition}\label{p.conqua}
For any $\eta_1,\ldots,\eta_N$ and $j,k=1,\ldots,N$, the processes
\begin{equation}\label{e.262}
Q_{\eps,j,k}(\cdot)\to \la Y(\cdot,\eta_j),Y^*(\cdot,\eta_k)\ra
\end{equation}
and
\begin{equation}\label{e.262a}
 \mathscr{Q}_{\eps,j,k}(\cdot)\to0,
\end{equation}
in probability in $C[0,\infty)$, as $\eps\to0$.
\end{proposition}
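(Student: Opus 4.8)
The plan is to establish both \eqref{e.262} and \eqref{e.262a} through an $L^2(\bbP)$ computation — showing that each random process converges in $L^2$ uniformly on $[0,T]$, which implies convergence in probability in $C[0,T]$ (and hence in $C[0,\infty)$ by exhausting with compact time intervals). The uniform-in-$t$ modulus of continuity needed to upgrade from pointwise-in-$t$ convergence to convergence in $C[0,T]$ is already supplied by the tightness estimates of the previous section, so the substantive task is to control, for each fixed $t$, the mean and variance of $Q_{\eps,j,k}(t)$ and of $\mathscr{Q}_{\eps,j,k}(t)$.

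For \eqref{e.262a}: the key feature is the factor $\exp\{-is|p|^2/\eps^2\}$ appearing in $\mathscr{Q}_{\eps,j,k}(t)$ in \eqref{sqjk}. I would first show $\bbE[\mathscr{Q}_{\eps,j,k}(t)]\to 0$: using $\bbE[g_\eps^{j,k}(s,\xi,p)]$, which solves a Duhamel expansion with the same phase structure as in \eqref{M-eps}, each term in the series carries an oscillatory integral in $p$ with a non-degenerate quadratic phase $|p|^2/\eps^2$, so integration by parts (or stationary phase / Plancherel together with the Schwartz decay of $\hat R$) gives a gain of a power of $\eps$ per term, with the series controlled by Lemma \ref{l.bdq1} and Corollary \ref{c.bdpsieps}. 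Then I would bound $\Var(\mathscr{Q}_{\eps,j,k}(t))$ by a fourth-moment computation on $\hat\psi_\eps$, again exploiting that the defining phases oscillate with frequency $\eps^{-2}$; the diagram expansion mentioned in Remark \ref{rm1.3} organizes the terms, and the $\delta$-correlation in time collapses most of them. Both pieces together give $\mathscr{Q}_{\eps,j,k}(t)\to 0$ in $L^2(\bbP)$.

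For \eqref{e.262}: here there is no $\eps^{-2}$ phase in $Q_{\eps,j,k}(t)$, so the strategy is different — I would show $\bbE[Q_{\eps,j,k}(t)]$ converges to the right-hand side of \eqref{e.yjk} and that $\Var(Q_{\eps,j,k}(t))\to 0$. For the mean: $\bbE[h_\eps^{j,k}(s,\xi-p)] = \bbE[\hat\psi_\eps(s,\xi-p+\eps^2\eta_j)\hat\psi_\eps^*(s,\xi-p+\eps^2\eta_k)]$ satisfies a closed Duhamel equation (by It\^o isometry, analogous to Lemma \ref{l.261} but keeping the two distinct arguments), whose series expansion I would match term-by-term, as $\eps\to0$, against the series \eqref{e.cov3}/\eqref{e.yjk}; the $\eps^2\eta_j,\eps^2\eta_k$ shifts disappear in the limit by continuity of $\hat\phi_0$, and the phases $e^{i(\cdots)s_\ell}$ in the Duhamel expansion of $\bbE[h_\eps^{j,k}]$ converge to $e^{i(\eta_j-\eta_k)\cdot p_\ell s_\ell}$ exactly as required. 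Dominated convergence on the series is justified by the same $L^1$-type bounds as in Lemma \ref{l.bdq1}. For the variance: this is the \emph{self-averaging} computation flagged in Remark \ref{r.heuristics} — one writes $\Var(Q_{\eps,j,k}(t))$ as a space-time integral of fourth-order correlations of $\hat\psi_\eps$ evaluated at frequencies $\xi-p$ and $\xi-q$, expands in diagrams, and observes that the "disconnected" contribution cancels against the square of the mean while the "connected" contributions all carry an oscillatory factor forcing $p=q$ on a set of measure $O(\eps^d)$ or a phase of frequency $\eps^{-2}$; either way each survives only with a vanishing prefactor.

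The main obstacle is the fourth-moment / diagram analysis underlying the variance bounds — in particular, in $\Var(Q_{\eps,j,k}(t))$, showing that \emph{every} diagram other than the one reproducing $(\bbE Q_{\eps,j,k}(t))^2$ is genuinely $o(1)$. This requires carefully tracking which Wiener-integral pairings produce oscillatory phases in the "slow" variables $p$ versus phases of frequency $\eps^{-2}$, and bounding the resulting oscillatory integrals uniformly in the number of scattering events (so that the sum over diagram orders still converges, using the a priori bounds from Lemma \ref{l.bdq1} and Corollary \ref{c.bdpsieps}). The $\delta$-correlation in time is what makes this tractable: it forces the time variables in distinct Wiener integrals to interlace in only a few combinatorial patterns, so the diagram bookkeeping stays finite.
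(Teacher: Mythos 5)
Your plan coincides with the paper's proof in all essentials: tightness of the quadratic-variation processes reduces the claim to fixed-$t$ convergence, the mean of $Q_{\eps,j,k}(t)$ is identified with \eqref{e.yjk} by matching the Duhamel series term by term, the variance is killed by a Wick/diagram expansion of the fourth moment in which only the ladder pairing survives (all other pairings carrying an $\eps^{-2}$ phase proportional to $(p'-p)\cdot w$, hence vanishing for a.e.\ $p\neq p'$), and $\mathscr{Q}_{\eps,j,k}\to0$ follows from the explicit $\eps^{-2}|p|^2s$ phase. The only cosmetic differences are that the paper proves tightness of $Q_{\eps,j,k}$ and $\mathscr{Q}_{\eps,j,k}$ as a separate short lemma rather than borrowing it from the estimates on ${\cal M}_\eps$, and disposes of $\mathscr{Q}_{\eps,j,k}$ through a single $L^2$ computation (oscillating in the largest time variable) instead of treating mean and variance separately.
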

Establishing this result finishes the proof of Theorem \ref{t.mainth}.

\subsection{Proof of Proposition \ref{p.conqua}}
The result will be concluded at the end of a series of lemmas.
First we establish tightness property for the respective families.
\begin{lemma}\label{l.tight1}
The families of processes $\{Q_{\eps,j,k}(\cdot)\}_{\eps>0}$ and $\{\mathscr{Q}_{\eps,j,k}(\cdot)\}_{\eps>0}$ are tight in $C[0,\infty)$.
\end{lemma}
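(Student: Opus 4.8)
The plan is to prove tightness of the two families via the Kolmogorov criterion for continuous processes on $C[0,\infty)$, which in this one-dimensional (time) setting amounts to a moment bound on increments of the form $\bbE|Q_{\eps,j,k}(t_1)-Q_{\eps,j,k}(t_2)|^{2n}\les |t_1-t_2|^n$ (and similarly for $\mathscr{Q}_{\eps,j,k}$), uniformly in $\eps\in(0,1]$, together with a bound on $\bbE|Q_{\eps,j,k}(0)|^{2n}$ which is trivially zero here since both processes vanish at $t=0$. First I would write the increment explicitly: from \eqref{qjk}, for $t_1<t_2$,
\[
Q_{\eps,j,k}(t_2)-Q_{\eps,j,k}(t_1)=\int_{t_1}^{t_2}\int_{\R^d}\frac{\hat R(p)}{(2\pi)^d}e^{i(\eta_j-\eta_k)\cdot ps}h_\eps^{j,k}(s,\xi-p)\,dpds,
\]
and analogously for $\mathscr{Q}_{\eps,j,k}$ using \eqref{sqjk}.

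Next I would estimate the $2n$-th moment of this increment. Applying Jensen's (or Minkowski's) inequality to pull the $2n$-th power inside the $s$-integral over the interval of length $|t_2-t_1|$, and using $\hat R\in L^1$ together with $\hat R\ge 0$, the task reduces to controlling $\bbE|h_\eps^{j,k}(s,\xi-p)|^{2n}$ uniformly in $s\in[0,T]$, $p\in\R^d$, $\xi\in\R^d$, $\eps\in(0,1]$. By definition \eqref{hjk}, Cauchy–Schwarz gives $|h_\eps^{j,k}(s,\xi-p)|\le|\hat\psi_\eps(s,\xi-p+\eps^2\eta_j)|\,|\hat\psi_\eps(s,\xi-p+\eps^2\eta_k)|$, so another Cauchy–Schwarz in the probability space bounds $\bbE|h_\eps^{j,k}|^{2n}$ by the product of fourth-order $\psi$-moments, which are uniformly bounded by Corollary~\ref{c.bdpsieps}. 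The same computation applies verbatim to $g_\eps^{j,k}$ in \eqref{gjk}, since the extra oscillating factor $\exp\{-i(p/\eps^2)\cdot ps\}$ in \eqref{sqjk} has modulus one and plays no role in the modulus bound. This yields
\[
\bbE|Q_{\eps,j,k}(t_2)-Q_{\eps,j,k}(t_1)|^{2n}\les |t_2-t_1|^{2n-1}\!\int_{t_1}^{t_2}\!\!\Big(\int_{\R^d}\hat R(p)\,dp\Big)^{2n-1}\!\!\int_{\R^d}\hat R(p)\,\bbE|h_\eps^{j,k}(s,\xi-p)|^{2n}dp\,ds\les |t_2-t_1|^{2n},
\]
which is even stronger than the $|t_2-t_1|^n$ needed (for $n\ge1$), so the Kolmogorov–Chentsov criterion on each $[0,T]$ applies; combined with tightness of the initial value (zero) and a standard diagonal argument over $T\uparrow\infty$, this gives tightness in $C[0,\infty)$.

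I do not expect a real obstacle here — this lemma is a soft, routine consequence of the uniform moment bounds already established in Lemma~\ref{l.bdq1} and Corollary~\ref{c.bdpsieps}, and the only mild point of care is handling the Fréchet topology on $C[0,\infty)$, i.e. checking tightness on each compact time interval $[0,T]$ separately and then patching, which is standard. If anything, the ``hardest'' part is purely bookkeeping: making sure the constants in the Jensen/Cauchy–Schwarz steps are genuinely independent of $\eps$, which they are because the oscillatory phases are unimodular and all the $\eps$-dependence in $h_\eps^{j,k}$, $g_\eps^{j,k}$ sits inside the $\psi$-moments controlled uniformly by Corollary~\ref{c.bdpsieps}. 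The genuinely substantive work — identifying the limits in \eqref{e.262} and \eqref{e.262a} — is deferred to the subsequent lemmas and is not needed for this statement.
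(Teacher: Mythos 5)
Your proof is correct and follows essentially the same route as the paper: both bound the increment by the deterministic integral $\int_{t_1}^{t_2}\int_{\R^d}\frac{\hat R(p)}{(2\pi)^d}|h_\eps^{j,k}(s,\xi-p)|\,dp\,ds$, then use Minkowski/Jensen together with the uniform moment bounds of Corollary~\ref{c.bdpsieps} to get an increment estimate of order $|t_2-t_1|^{\text{power}\ge 2}$, and invoke the Kolmogorov criterion (the paper works with $\|\cdot\|_n$ norms and gets $C|t_2-t_1|$ for the $n$-norm, which is the same bound in different clothing). Your observation that the unimodular phase makes the $\mathscr{Q}$ case identical is exactly the paper's (omitted) argument as well.
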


\begin{proof}
We use the Kolmogorov tightness criterion, see \cite[Theorem 12.3,
p. 95]{bil}. From the  definition of $Q_{\eps,j,k}(\cdot)$, we have
\[
|Q_{\eps,j,k}(t_2)-Q_{\eps,j,k}(t_1)|\leq \int_{t_1}^{t_2}\int_{\R^d} \frac{\hat{R}(p)}{(2\pi)^d} |h_\eps^{j,k}(s,\xi-p)|dpds
\]
for any $0\le t_1<t_2$.
Applying the triangle inequality and Corollary~\ref{c.bdpsieps}, we
conclude that for any $T>0$
\[
\begin{aligned}
\|Q_{\eps,j,k}(t_2)-Q_{\eps,j,k}(t_1)\|_n &\leq\int_{t_1}^{t_2}\int_{\R^d} \frac{\hat{R}(p)}{(2\pi)^d} \|h_\eps^{j,k}(s,\xi-p)\|_ndpds \\
&\leq C|t_2-t_1|
\end{aligned}
\]
for all $t_1,t_2\in [0,T]$, with the constant $C$ independent of $\eps$. The proof for $\{\mathscr{Q}_{\eps,j,k}(\cdot)\}_{\eps>0}$ is similar so we omit it here.
\end{proof}

With the tightness property established, we only need to show \eqref{e.262} and \eqref{e.262a} for fixed $t>0$. We will study $Q_{\eps,j,k}$ and $\mathscr{Q}_{\eps,j,k}$ separately in the following sections.

\subsubsection{Convergence of $Q_{\eps,j,k}(t)$ }

% Recall that
% \[
% Q_{\eps,j,k}(t)=\int_0^t\int_{\R^d} \frac{\hat{R}(p)}{(2\pi)^d} e^{i(\eta_j-\eta_k)\cdot ps} \psi_\eps(s,\xi+\eps^2 \eta_j-p)\psi_\eps^*(s,\xi+\eps^2\eta_k-p)dpds.
% \]
For fixed $t>0$, to show the convergence of $Q_{\eps,j,k}(t)$ in
probability, we first consider the expectation and prove the following.
\begin{lemma}\label{l.con1stmm}
For any $t>0$ and $j,k=1,\ldots,N$, we have
 \begin{equation}
\label{021305-20}
\lim_{\eps\to0}\E[Q_{\eps,j,k}(t)]= \la Y(\cdot,\eta_j),Y^*(\cdot,\eta_k)\ra(t).
\end{equation}
\end{lemma}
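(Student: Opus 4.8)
The plan is to move the expectation inside the $(s,p)$-integral in \eqref{qjk}, derive and solve a closed Duhamel equation for the averaged two-point function $H_\eps(s,\zeta):=\bbE[h_\eps^{j,k}(s,\zeta)]=\bbE[\hat\psi_\eps(s,\zeta+\eps^2\eta_j)\hat\psi_\eps^*(s,\zeta+\eps^2\eta_k)]$, and then pass to the limit term by term. By Corollary~\ref{c.bdpsieps} (with $n=1$) and the Cauchy--Schwarz inequality, $\sup_{s\in[0,T],\,\zeta\in\R^d,\,\eps\in(0,1]}|H_\eps(s,\zeta)|<+\infty$ for every $T>0$; since moreover $\hat R\in L^1(\R^d)$, the integrand in \eqref{qjk} is integrable on $\Omega\times[0,t]\times\R^d$, and Fubini gives
\[
\bbE[Q_{\eps,j,k}(t)]=\int_0^t\int_{\R^d}\frac{\hat R(p)}{(2\pi)^d}\,e^{i(\eta_j-\eta_k)\cdot ps}\,H_\eps(s,\xi-p)\,dp\,ds,
\]
so everything reduces to the $\eps\to0$ limit of this deterministic quantity.

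Next I would derive a closed equation for $H_\eps$. By the martingale representation \eqref{e.inteeq}--\eqref{M-eps}, $\hat\psi_\eps(s,\zeta')=\hat\phi_0(\zeta')+M_\eps(s,\zeta')$ with $M_\eps(s,\zeta')$ a mean-zero $\F_s$-martingale, so $H_\eps(s,\zeta)=\hat\phi_0(\zeta+\eps^2\eta_j)\hat\phi_0^*(\zeta+\eps^2\eta_k)+\bbE[\langle M_\eps(\cdot,\zeta+\eps^2\eta_j),M_\eps^*(\cdot,\zeta+\eps^2\eta_k)\rangle_s]$. Evaluating the cross-variation exactly as in the derivation of \eqref{qjk}, but with the base frequency $\xi$ replaced by $\zeta$ — here the two oscillatory phases in \eqref{M-eps} combine into $\exp\{i(\eta_j-\eta_k)\cdot pr\}$, the factor $1/\eps^2$ being cancelled by the $\eps^2$-spacing of the two frequencies — and taking expectations (Fubini once more), I obtain the closed linear equation
\[
H_\eps(s,\zeta)=\hat\phi_0(\zeta+\eps^2\eta_j)\hat\phi_0^*(\zeta+\eps^2\eta_k)+\int_0^s\int_{\R^d}\frac{\hat R(p)}{(2\pi)^d}\,e^{i(\eta_j-\eta_k)\cdot pr}\,H_\eps(r,\zeta-p)\,dp\,dr,
\]
in which all the $\eps$-dependence now sits in the inhomogeneous term.

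Solving this by Picard iteration, the $n$-th iterate is bounded in modulus by $\tfrac{s^n}{n!}(\|\hat R\|_{L^1(\R^d)}/(2\pi)^d)^n\|\hat\phi_0\|_{L^\infty(\R^d)}^2$, so the iteration converges absolutely, with $\hat\phi_0(\zeta-p_1-\cdots-p_n+\eps^2\eta_j)\hat\phi_0^*(\zeta-p_1-\cdots-p_n+\eps^2\eta_k)$ appearing at the leaf of the $n$-th simplex integral. Substituting into the formula above for $\bbE[Q_{\eps,j,k}(t)]$, renaming the outer pair $(s,p)$ as $(s_1,p_1)$ and shifting the summation index by one — all interchanges of sum, integral and expectation being licensed by absolute convergence — I get
\[
\bbE[Q_{\eps,j,k}(t)]=\sum_{m\ge1}\int_{[0,t]_<^m}\int_{\R^{md}}\prod_{\ell=1}^m\frac{\hat R(p_\ell)e^{i(\eta_j-\eta_k)\cdot p_\ell s_\ell}}{(2\pi)^d}\,\hat\phi_0\big(\xi-p_1-\cdots-p_m+\eps^2\eta_j\big)\hat\phi_0^*\big(\xi-p_1-\cdots-p_m+\eps^2\eta_k\big)\,d\mathbf{p}_{1,m}\,d\mathbf{s}_{1,m},
\]
which is the series \eqref{e.yjk} with $|\hat\phi_0(\cdot)|^2$ replaced by $\hat\phi_0(\cdot+\eps^2\eta_j)\hat\phi_0^*(\cdot+\eps^2\eta_k)$. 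Since $\hat\phi_0\in C_b(\R^d)$, this leaf factor converges pointwise to $|\hat\phi_0(\xi-p_1-\cdots-p_m)|^2$ and is bounded, uniformly in $\eps$, by $\|\hat\phi_0\|_{L^\infty(\R^d)}^2$; the $m$-th term is then dominated by the summable quantity $\tfrac{t^m}{m!}(\|\hat R\|_{L^1(\R^d)}/(2\pi)^d)^m\|\hat\phi_0\|_{L^\infty(\R^d)}^2$, and dominated convergence identifies $\lim_{\eps\to0}\bbE[Q_{\eps,j,k}(t)]$ with \eqref{e.yjk}, i.e.\ with $\la Y(\cdot,\eta_j),Y^*(\cdot,\eta_k)\ra(t)$.

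The main obstacle is conceptual rather than computational: this lemma contains no genuine oscillatory-integral estimate, because the $\eps^2$-spacing of the two frequencies in $h_\eps^{j,k}$ cancels the fast phase exactly, so the real work is (i) deriving the closed two-point equation for $H_\eps$ with the correct constants and phases, being careful with the conjugated noise $\hat B^*(dt,dp)=\hat B(dt,-dp)$ and with the phase cancellation, and (ii) the bookkeeping needed to match the resulting Duhamel series with \eqref{e.yjk} and to justify the interchanges via the factorial simplex bounds. Passing to the limit in the genuinely oscillatory case of two \emph{fixed}, distinct frequencies — where $H_\eps$ decays and self-averaging enters — is the hard part of the companion second-moment estimates, but it is not needed for the first-moment statement.
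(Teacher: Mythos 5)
Your proof is correct and follows essentially the same route as the paper: derive the closed Duhamel equation for the averaged two-point function $\bbE[h^{j,k}_\eps(s,\zeta)]$ from the martingale representation \eqref{e.inteeq} (the $\eps^{-2}$ phase cancelling because the two frequencies are $\eps^2$-close), iterate to get the simplex series, and pass to the limit term by term against \eqref{e.yjk}. The only difference is that you spell out the Fubini and dominated-convergence justifications via the factorial simplex bounds, which the paper leaves implicit.
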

\begin{proof}
Fix $j,k$, define $\bar h_\eps^{j,k}(t,\xi)=\E
h_\eps^{j,k}(s,\xi)$. By \eqref{e.inteeq} we obtain the integral
equation for $\bar h_\eps^{j,k}$:
\[
\bar h_\eps^{j,k}(t,\xi)=\hat{\phi}_0(\xi+\eps^2\eta_j)\hat{\phi}_0^*(\xi+\eps^2 \eta_k)+\int_0^t \int_{\R^d} \frac{\hat{R}(p)}{(2\pi)^d} e^{i(\eta_j-\eta_k)\cdot ps}\bar h_\eps^{j,k}(t,\xi-p) dpds.
\]
Iterating the above integral equation, we obtain
\begin{equation}
\label{011305-20}
\bar h_\eps^{j,k}(t,\xi)=\hat{\phi}_0(\xi+\eps^2\eta_j)\hat{\phi}_0^*(\xi+\eps^2
\eta_k)
+\sum_{n=1}^{+\infty}\bar h_{\eps,n}^{j,k}(t,\xi),
\end{equation}
 with 
\[
\begin{aligned}
\bar h_{\eps,n}(t,\xi)&:=\int_{[0,t]_<^n}\int_{\R^{nd}} \prod_{\ell=1}^n \frac{\hat{R}(p_\ell)e^{i(\eta_j-\eta_\ell)\cdot p_\ell s_\ell}}{(2\pi)^d} \\
&\times \hat{\phi}_0(\xi+\eps^2\eta_j-p_1-\ldots-p_n)\hat{\phi}_0^*(\xi+\eps^2 \eta_k-p_1-\ldots-p_n)d \mathbf{p}_{1,n}d\mathbf{s}_{1,n}
\end{aligned}
\]
Passing to the limit in the series in the right hand side of
\eqref{011305-20} on the term by term basis and  computing
$\lim_{\eps\to0}\bar h_{\eps,n}^{j,k}(t,\xi)$, we conclude \eqref{021305-20}.
\end{proof}

\subsubsection{Convergence  of $\E |Q_{\eps,j,k}(t)|^2$}
\label{sec3.4.2}
Next we analyze $\E |Q_{\eps,j,k}(t)|^2$, which is the main technical part of the paper. The goal is to show that 
\begin{lemma}\label{l.con2ndmm}
For any $t>0$ and $j,k=1,\ldots,N$, we have 
\begin{equation}
\label{031305-20}
\E|Q_{\eps,j,k}(t)|^2\to |\la Y(\cdot,\eta_j),Y(\cdot,\eta_k)^*\ra(t)|^2,\quad
\mbox{as} \quad \eps\to0.
\end{equation}
\end{lemma}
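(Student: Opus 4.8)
The plan is to show that $\E|Q_{\eps,j,k}(t)|^2$ converges to the square of the mean computed in Lemma~\ref{l.con1stmm}, which combined with that lemma immediately gives $\Var(Q_{\eps,j,k}(t))\to 0$ and hence convergence in probability. Writing $Q_{\eps,j,k}(t)$ as the double space-time integral from \eqref{qjk}, the quantity $\E|Q_{\eps,j,k}(t)|^2$ becomes a quadruple integral over $(s_1,p_1,s_2,p_2)$ against
\[
\E\big[h_\eps^{j,k}(s_1,\xi-p_1)\,\overline{h_\eps^{j,k}(s_2,\xi-p_2)}\big],
\]
i.e.\ a fourth moment of $\hat\psi_\eps$ evaluated at four nearby frequencies. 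The strategy is to expand each of the four factors $\hat\psi_\eps$ via the Duhamel/martingale series \eqref{e.inteeq}--\eqref{M-eps}, so that the expectation becomes a sum of Gaussian contractions (Wick pairings) of the driving noise $\hat B$. Because the noise is $\delta$-correlated in time, each pairing forces a time coincidence, which collapses many diagrams; this is precisely the simplification alluded to in Remark~\ref{rm1.3}.

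The key step is the classification of diagrams. Pairings split into two types: \emph{ladder} (or ``direct'') diagrams, in which the two $\hat\psi_\eps$-factors making up $h_\eps^{j,k}(s_1,\cdot)$ are paired among themselves and likewise for $h_\eps^{j,k}(s_2,\cdot)$, and \emph{crossing} diagrams, in which at least one noise variable from the first pair contracts with one from the second pair. The ladder diagrams factorize: they reproduce exactly $\E[h_\eps^{j,k}(s_1,\xi-p_1)]\,\E[\overline{h_\eps^{j,k}(s_2,\xi-p_2)}]$ up to error terms, and after integration give $|\E[Q_{\eps,j,k}(t)]|^2 + o(1)$, which converges to $|\la Y(\cdot,\eta_j),Y(\cdot,\eta_k)^*\ra(t)|^2$ by Lemma~\ref{l.con1stmm} and \eqref{e.yjk}. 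The crossing diagrams are the ones that must be shown to vanish as $\eps\to0$: each crossing pairing produces an extra oscillatory factor of the form $\exp\{i(\cdots)/\eps^2\}$ coming from the phase $\exp\{i(2\xi\cdot p-|p|^2)s/(2\eps^2)\}$ in \eqref{M-eps}, because the two recentering phases no longer cancel. A non-stationary phase / Riemann--Lebesgue argument—using that $\hat R$ and $\hat\phi_0$ are smooth and integrable and that the relevant Jacobian in the momentum variables is nondegenerate—then shows these terms are $O(\eps^2)$ or at least $o(1)$, uniformly on $[0,t]$.

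Concretely, I would: (i) insert the series \eqref{e.inteeq} into \eqref{qjk}, truncating at order $N_0$ and controlling the tail uniformly in $\eps$ using the moment bounds \eqref{030805-20} and Corollary~\ref{c.bdpsieps} together with the absolute convergence of the Duhamel series (as in the proof of Proposition~\ref{prop010605-20}); (ii) apply the Gaussian/Wick formula to the finite sum, organizing the resulting terms by pairing type; (iii) show the ladder contribution converges to $|\la Y(\cdot,\eta_j),Y(\cdot,\eta_k)^*\ra(t)|^2$ via term-by-term passage to the limit, exactly mirroring the computation of $\lim_\eps \bar h_{\eps,n}^{j,k}$ in Lemma~\ref{l.con1stmm}; (iv) bound the crossing contribution by a sum (over the finitely many truncated diagrams) of oscillatory integrals, change variables so the fast phase is linear in one momentum variable, and integrate by parts once (or invoke Riemann--Lebesgue) to gain a factor tending to $0$.

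The main obstacle is step (iv): one must verify that in every crossing diagram the accumulated $1/\eps^2$-phase is genuinely oscillatory after the time-localization imposed by the $\delta$-correlations, i.e.\ that the phase $2\xi\cdot p-|p|^2$ (and its analogues at the internal vertices) does not become stationary on the effective domain of integration, and that the amplitude—built from products of $\hat R$, $\hat\phi_0$ and their translates—is smooth and integrable enough to license the integration by parts with constants uniform in $\eps$. Handling the boundary terms from the simplex $[0,t]_<^n$ and the shrinking $\eps^2\eta$-shifts in the frequency arguments (which are harmless by continuity but must be tracked) is the bookkeeping-heavy part of this step. Once the crossing diagrams are shown to be $o(1)$, \eqref{031305-20} follows.
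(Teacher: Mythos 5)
Your proposal follows essentially the same route as the paper: expand the fourth moment via Wick pairings, identify the ladder diagrams as reproducing $|\E[Q_{\eps,j,k}(t)]|^2$ (hence the claimed limit via Lemma~\ref{l.con1stmm}), and kill the crossing diagrams by an uncancelled $\eps^{-2}$-phase, with the full series controlled by a uniform dominating bound (the paper's Lemma~\ref{l.sumbd}) rather than your truncation. The only point where you overcomplicate step (iv) is that no stationary-phase analysis or integration by parts in the momentum variables is needed: in a crossing diagram the phase is \emph{linear} in the outermost paired time variable $\lambda_1$ with coefficient $(p'-p)\cdot w_1$, which is nonzero for a.e.\ $(p,p',w_1)$, so a one-dimensional Riemann--Lebesgue estimate followed by dominated convergence suffices.
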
 
Combining Lemmas~\ref{l.tight1}, \ref{l.con1stmm} and \ref{l.con2ndmm}, we complete the proof of \eqref{e.262}.

The expression of $\E|Q_{\eps,j,k}(t)|^2$  involves the fourth moments of $\hat\psi_\eps$, which we will analyze through a rather standard diagram expansion. Before entering the details of the proof, we introduce the notation that will be used and prove some preliminary results.

%\[
%\begin{aligned}
%\E[|Q_{\eps,j,k}(t)|^2]=\int_{[0,t]^2}\int_{\R^{2d}}& \frac{\hat{R}(p_1)\hat{R}(p_2)}{(2\pi)^{2d}} e^{2i(\eta_j-\eta_k)\cdot (p_1s_1-p_2s_2)} \\
%&\times \E[\psi_\eps(s_1,\xi+\eps^2 \eta_j-p_1)\psi_\eps(s_2,\xi+\eps^2 \eta_k-p_2)\psi_\eps^*(s_1,\xi+\eps^2\eta_k-p_1)\psi_\eps^*(s_2,\xi+\eps^2 \eta_j-p_2)dpds
%\end{aligned}
%\]

\subsection*{Diagram expansion and moments calculation}% and the proof of Lemma~\ref{l.conpoint}}
\label{s.diagram}
Starting from the integral equation \eqref{e.inteeq}, for fixed $(t,\xi)$, we can write the random variable $\hat\psi_\eps(t,\xi)$ as an infinite Wiener chaos expansion: 
\begin{equation}%\label{e.defgn}
%\begin{aligned}
\hat\psi_\eps(t,\xi)=\sum_{n= 0}^{+\infty}  \hat\psi_{n,\eps}(t,\xi), 
\end{equation}
where  $\hat\psi_{0,\eps}(t,\xi)=\hat{\phi}_0(\xi)$ and, cf \eqref{M-eps},
\begin{equation}\label{e.defgn}
\hat\psi_{n,\eps}(t,\xi):= \int_{[0,t]_<^n}\int_{\R^{nd}}\prod_{j=1}^n \frac{\hat{B}(ds_j,dp_j)}{i(2\pi)^{d}}e^{i\Theta_{n}(\xi,\mathbf{p},\mathbf{s})\eps^{-2}}\hat{\phi}_0(\xi-p_1-\ldots-p_n) , \quad n\geq1.
%=:&\sum_{n\geq 0} g_{n,\eps}(t,\xi)
%\end{aligned}
\end{equation}

For each $n\geq1$, the phase factor is 
\begin{equation}\label{e.defphase}
\begin{aligned}
2\Theta_{n}(\xi,\mathbf{p},\mathbf{s})=&(|\xi|^2-|\xi-p_1|^2)s_1+(|\xi-p_1|^2-|\xi-p_1-p_2|^2)s_2\\
&+\ldots+(|\xi-\ldots-p_{n-1}|^2-|\xi-\ldots-p_n|^2)s_n.
\end{aligned}
\end{equation}
%with $\mathbf{p}=(p_1,\ldots,p_n), \mathbf{s}=(s_1,\ldots,s_n)$. 

%\textbf{Moments calculation and notations.}
%Throughout the rest of the section, 
In what follows we will need to estimate moments of the form 
\begin{equation}
\label{011805-20a}
\cN_\eps(\bfn,\bfn',\mathbf{t},\mathbf{t}',\pmb{\xi},\pmb{\xi}'):=\E\left[\prod_{j=1}^{N_1}\hat\psi_{n_j,\eps}(t_j,\xi_j) \prod_{j'=1}^{N_2} \hat\psi_{n'_{j'},\eps}^*(t'_{j'},\xi'_{j'})\right].
\end{equation}
Here $N_1,N_2$ are positive integers and since for our purpose it is
enough to consider the $4-$th order moments we have $N_1+N_2=4$. We
use the boldface notation, e.g. $\bfn,\pmb{\xi}$, to denote the
vectors formed by the respective elements, e.g. $\{n_j\},\{\xi_j\}$. Let $|\mathbf{n}|_1=\sum_{j=1}^{N_1} n_j$ and $|\mathbf{n}'|_1=\sum_{j'=1}^{N_2} n'_{j'}$ be the $\ell_1$ norm of $\mathbf{n},\mathbf{n}'$, and 
$K=|\mathbf{n}|_1+|\mathbf{n}'|_1.$ From the property of multiple moments of Gaussians, in order for the expression in \eqref{011805-20a} to
be  non-zero, the integer $K>0$ has to be even.  

The expression for $\hat\psi_{n,\eps}(t,\xi)$ in \eqref{e.defgn} involves
an $n-$fold stochastic time integral in the $s-$variable
and an $n-$fold integral in the momentum variable $p$. For
$\hat\psi_{n_j,\eps}(t_j,\xi_j)$, we will use
$\mathbf{s}_j=(s_{j,1},\ldots,s_{j,n_j})$ as the ``$s-$variable''
ensemble corresponding to the index $j$ and, similarly,
$\mathbf{p}_j=(p_{j,1},\ldots,p_{j,n_j})$ as the
``$p-$variable''. Similarly, we will use an analogous notation for the
primed variables.
% $\mathbf{s}'_{j'}=(s'_{j',1},\ldots,s'_{j',n'_{j'}})$ and
% $\mathbf{p}'_{j'}=(p_{j',1},\ldots,p_{j', n'_{j'}})$ for $\hat
% \psi_{n'_{j'},\eps}^*(s_\ell,\zeta_\ell)$. 
% For any ``$s-$variable'', it is associated with a ``$p-$variable'', in the sense that the two variables come from the same $\hat{V}$ in \eqref{e.defgn}.

With the above convention, we can write 
\begin{equation}\label{e.gnj}
 \begin{aligned}
 &\prod_{j=1}^{N_1} \hat\psi_{n_j,\eps}(t_j,\xi_j) =\prod_{j=1}^{N_1}\Big\{\int_{[0,t_j]_<^{n_j}}\int_{\R^{n_jd}}\prod_{k=1}^{n_j} 
\frac{\hat{B}(ds_{j,k},dp_{j,k})}{i(2\pi)^{d}}\\
&
\times \exp\left\{i\Theta_{n_j}(\xi_j,\mathbf{p}_j,\mathbf{s}_j)\eps^{-2}\right\}\hat{\phi}_0(\xi_j-p_{j,1}-\ldots-p_{j,n_j})\Big\}
 \end{aligned}
 \end{equation}
and
\begin{equation}\label{e.gmell}
\begin{aligned}
&\prod_{j'=1}^{N_2} \hat\psi_{n'_{j'},\eps}^\star(t'_{j'},\xi'_{j'})=\prod_{j'=1}^{N_2}\Big\{\int_{[0,t'_{j'}]_<^{n'_{j'}}}\int_{\R^{n'_{j'} d}}\prod_{k'=1}^{n'_{j'}} \frac{\hat{B}^*(ds'_{j',k'},dp'_{j',k'})}{-i(2\pi)^{d}}\\
&\exp\left\{-i\Theta_{n'_{j'}}(\xi'_{j'},\mathbf{p}'_{j'},\mathbf{s}'_{j'})\eps^{-2}\right\}\hat{\phi}_0^*(\xi'_{j'}-p'_{j',1}-\ldots-p'_{j',n'_{j'}}) \Big\}.
\end{aligned}
\end{equation}
%\[
%g_{n_j,\eps}(t_j,\xi_j)=\int_{[0,t_j]_<^{n_j}}\int_{\R^{n_jd}}\prod_{k=1}^{n_j} \frac{\hat{V}(t_{j,k},dp_{j,k})}{i(2\pi)^d}e^{i\Theta_{n_j}(\xi_j,p_j,t_j)\eps^{-2}}\hat{\phi}_0(\xi_j-p_{j,1}-\ldots-p_{j,n_j}) ds
%\]
%and
%\[
%g_{m_\ell,\eps}^*(s_\ell,\eta_\ell)
%\]
%With these notations, we can write 
%\[
%\begin{aligned}
%\cM_\eps(\bfn,\bfm,\mathbf{t},\mathbf{s},\pmb{\xi},\pmb{\eta})=\int_{S_{\bfn,\bfm,\mathbf{t},\mathbf{s}}}\int_{\R^{(|\bfn|_1+|\bfm|_1)d}}\prod \frac{\hat{V}(u_{j,j'},dp_{j,j'})}{i(2\pi)^d}\frac{\hat{V}^*(v_{\ell,\ell'},dq_{\ell,\ell'})}{-i(2\pi)^d}\\
%e^{i\Theta_{n_j}(\xi_j,\mathbf{p}_j,\mathbf{u}_j)\eps^{-\alpha}}e^{-i\Theta_{m_\ell}(\eta_\ell,\mathbf{q}_\ell,\mathbf{v}_\ell)\eps^{-\alpha}}
%\end{aligned}
%\]
\textbf{Pairing.} To compute
$\cN_\eps(\bfn,\bfn',\mathbf{t},\mathbf{t}',\pmb{\xi},\pmb{\xi}')$, we
need to evaluate the expectation of the $K$-th moment of the Gaussian element
\begin{equation}
\label{011805-20}
\E\left[\prod_{j=1}^{N_1} \left(\prod_{k=1}^{n_j}\hat{B}(ds_{j,k},dp_{j,k})\right) \prod_{j'=1}^{N_2} \left(\prod_{k'=1}^{n'_{j'}}\hat{B}^*(ds'_{j',k'},dp'_{j',k'})\right)\right]
\end{equation}
that we handle using the Wick theorem, see e.g. \cite[Theorem
1.36]{janson}. To apply it we introduce some further notations.

Suppose that $A$ is a finite  subset of even
  cardinality. By a pairing ${\cal P}$ over the elements of the set, we mean
  any partition of $A$ into two element disjoint subsets.
Consider the set of all pairs $(\lam,w)$ belonging to the set 
\[
{\cal Z}:=\{(s_{j,k},p_{j,k}),(s'_{j',k'},
p'_{j',k'})\}_{j,k, j',k'}
\] ordered by the lexicographical
order, i.e. $(\lam_1,w_1)$ precedes $(\lam_2,w_2)$ and we write  $(\lambda_1,w_1)\preceq (\lambda_2,w_2)$,   if any
of the following happens:
\begin{itemize}
\item[1)] $(\lam_1,w_1)= (s_{j,k},p_{j,k})$ and  $(\lam_2,w_2)=
  (s'_{j',k'},p'_{j',k'})$, 
\item[2)]
%$(\lam_1,w_1)= (s_{j,k},p_{j,k})$  and  $(\lam_2,w_2)=
%(s_{j',k'},p_{j',k'})$ but $j<j'$, or 
$(\lambda_\ell,w_\ell)=(s_{j_\ell,k_\ell},p_{j_\ell,k_\ell})$, $\ell=1,2$ and $j_1<j_2$,
\item[3)] $(\lam_\ell,w_\ell)=
(s'_{j'_\ell,k'_\ell},p'_{j'_\ell,k'_\ell})$, $\ell=1,2$ and
$j'_1<j'_2$.
\end{itemize}
Consider all ordered pairings formed over the set
$\{(s_{j,k},p_{j,k}),(s'_{j',k'}, p'_{j',k'})\}_{j,k, j',k'}$ such that two elements with the same $j$-s, or
$j'$-s cannot be paired. In other words,
there is no pair formed inside any vector 
$$
[(s_{j,1},p_{j,1}),\ldots, (s_{j,n_j},p_{j,n_j})],\quad\mbox{ or }\quad
 [(s_{j',1}',p_{j',1}'),\ldots, (s_{j',n'_{j'}}',p_{j',n'_{j'}}')].
$$ 
Denote $((\lambda,w),(\lambda',w'))$  a typical pair. 
Assume that $(\lambda,w)$ is the left element of the pair 
in the sense that $(\lambda,w)\preceq (\lambda',w')$.
Each pairing also can be easily ordered
with the order inherited from the order of the
set ${\cal Z}$ on its left vertices.

Denote by $\Pi$ the set of all ordered pairings as described
above. 
Let 
\[
\iota_{w,w'}=\left\{
\begin{array}{ll}
1,&\mbox{ if $(\lam,w)= (s_{j,k},p_{j,k})$  and  $(\lam',w')=
    (s_{j',k'}',p_{j',k'}')$},\\
&\\
0,& \mbox{ if otherwise,}%&\mbox{ if $(\lam_\ell,w_\ell)=
    % (s_{j_\ell,k_\ell},p_{j_\ell,k_\ell})$, $\ell=1,2$, or $(\lam_\ell,w_\ell)=
%(s'_{j'_\ell,k'_\ell},p'_{j'_\ell,k'_\ell})$, $\ell=1,2$,}
\end{array}
\right.
\]
and
\[
S_{\bfn,\bfn',\mathbf{t},\mathbf{t}'}:=[0,t_1]_<^{n_1}\times \ldots \times  [0,t_{N_1}]^{n_{N_1}}_<\times [0,t_1']_<^{n_1'}\times \ldots \times  [0,t_{N_2}']_<^{n_{N_2}'}.
\]
Using the Wick theorem we can write that
\begin{align}
\label{021805-20}
&
\cN_\eps(\bfn,\bfn',\mathbf{t},\mathbf{t}',\pmb{\xi},\pmb{\xi}') =\sum_{{\cal P}\in \Pi}\int_{\R^{Kd}}\int_{S_{\bfn,\bfn',\mathbf{t},\mathbf{t}'}}\\
&
\left\{\prod_{((\lambda,w),(\lambda',w'))\in {\cal P}}\frac{\hat{R}(w)}{(2\pi)^{d}}\delta(\lambda-\lambda')\delta\big(w+(-1)^{\iota_{w,w'}}w'\big)\right\}e^{i\eps^{-2}\Theta} \Phi \, d \pmb{\lam}d \pmb{\lam}'d\mathbf{w}d\mathbf{w}',\notag
\end{align}
where to ease the notation, we write
$$
d\pmb{\lam}d\pmb{\lambda}'=\prod_{j=1}^{N_1}\prod_{k=1}^{n_j}ds_{j,k}\prod_{j'=1}^{N_2}\prod_{k'=1}^{n_{j'}'}ds'_{j',k'}\quad\mbox{and}\quad 
d\mathbf{w}d\mathbf{w}'=\prod_{j=1}^{N_1}\prod_{k=1}^{n_j}dp_{j,k}\prod_{j'=1}^{N_2}\prod_{k'=1}^{n_{j'}'}dp'_{j',k'}.
$$
Performing the integration over the $(\lam',w')$-variables, we conclude that%\gucomment{the product $\prod_{\ell=1}^{K/2}\hat{R}(w_\ell)$ is not very clear. Do we re-introduce those $w_1,\ldots,w_{K/2}$ variables? Or maybe we can keep the original notation to write it as $\prod_{((\lambda_1,w_1),(\lambda_2,w_2))\in \mathcal{P}}\frac{\hat{R}(w_1)}{(2\pi)^d}$}
\begin{align}
\label{021805-20a}
\cN_\eps(\bfn,\bfn',\mathbf{t},\mathbf{t}',\pmb{\xi},\pmb{\xi}')
=\sum_{{\cal P}\in \Pi}\int_{\R^{Kd/2}}\int_{S_{\cal P}}
\left\{\prod_{\ell=1}^{K/2}\frac{\hat{R}(w_\ell)}{(2\pi)^{d}}\right\}e^{i\eps^{-2}\Theta_{\cal P}}
  \Phi_{\cal P} \, d \pmb{\lam}d\mathbf{w},
\end{align}
%\red{
%\begin{equation}
%\begin{aligned}
%\label{021805-20a}
%\cN_\eps(\bfn,\bfn',\mathbf{t},\mathbf{t}',\pmb{\xi},\pmb{\xi}')
%=\sum_{{\cal P}\in \Pi}\int_{\R^{Kd/2}}\int_{S_{\cal P}}
%\left\{\prod_{((\lambda_1,w_1),(\lambda_2,w_2))\in {\cal P}}\frac{\hat{R}(w_1)}{(2\pi)^{d}}\right\} e^{i\eps^{-2}\Theta_{\cal P}}
%  \Phi_{\cal P} \, d \pmb{\lam}d\mathbf{w},
%\end{aligned}
%\end{equation}
%}
where 
the domain of integration $S_{\mathcal{P}}$ - that is a convex set, the phase $\Theta_{\cal
  P}$ and the factor $\Phi_{\cal P}$ in \eqref{021805-20a} are induced from
$S_{\bfn,\bfm,\mathbf{t},\mathbf{s}}$, 
$\Theta$ and $\Phi$ by the collapse of  $\lambda'$ and $w'$ -
variables. We have also denoted all $\pmb{\lambda}-$variables by $\{\lambda_\ell\}$ and all $\mathbf{w}-$variables by $\{w_\ell\}$, and assume that $\{\lambda_1,\ldots,\lambda_{K/2}\}$ is ordered according to the pairing order. Note that the components of $\pmb{\lambda}$ depend on the
partition $\mathcal{P}$. However they  obey the
ordering inherited from  each individual simplex.
 % In addition,  $S_{\mathcal{P}}$ clearly depends on the partition
 % $\mathcal{P}$, as each $\lambda_1$ is associated with a
 % $\lambda_2-$variable which also obeys the ordering before being
 % integrated out. 
It is quite possible that $S_{\mathcal{P}}=\emptyset$ for some pairings, e.g. if we have the domain $[0,t_1]_<^2\times [0,t_1']_<^2$, and the pairing
\[
\{(s_{1,1},p_{1,1}),(s'_{1,2},p'_{1,2})),((s_{1,2},p_{1,2}),(s'_{1,1},p'_{1,1}))\},
\]
then the set $\{s_{1,1}>s_{1,2}, s'_{1,1}>s'_{1,2}\}$ does not
intersect with $\{s_{1,1}=s'_{1,2},s_{1,2}=s'_{1,1}\}$ and, as a
result, $S_{\mathcal{P}}=\emptyset$. 

 For each $\mathcal{P}$, we can partition
$S_{\mathcal{P}}$, up to a null  Lebesgue measure  set, into subsets
depending on the ordering of $\{\lambda_1,\ldots,\lambda_{K/2}\}$. More precisely,
given a permutation $\si$ of the set $\{1,\ldots,\tfrac{K}{2}\}$, we
define 
\[
S_{\mathcal{P},\sigma}:=S_{\mathcal{P}}\cap \{\lambda_{\sigma(1)}> \lambda_{\sigma(2)}> \ldots> \lambda_{\sigma(\frac{K}{2})}\}.
\]
Obviously the sets are disjoint for different permutations and 
$
{\sf m}\left(S_{\mathcal{P}}\setminus\cup_{\sigma} S_{\mathcal{P},\sigma}\right)=0$, 
where the union $\cup_\sigma$ is taken over all the permutations and $\mathsf{m}(\cdot)$
denotes the Lebesgue measure on $\bbR^{K/2}$.

As in the case of ${\cal S}_{\cal P}$, it  is entirely possible
that some $S_{\mathcal{P},\sigma}$ can be empty sets -- we will take
care of them later in the proof, and for the moment, we do not
distinguish between them to keep the notation simple.
With the above notation, we write:
\begin{equation}
\label{021805-20b}
\cN_\eps(\bfn,\bfn',\mathbf{t},\mathbf{t}',\pmb{\xi}, \pmb{\xi}') = \sum_{\mathcal{P}}\sum_{\sigma}\mathcal{I}_\eps(\mathcal{P},\sigma)
\end{equation} with %\gucomment{the same problem with $\prod_{\ell=1}^{K/2}\hat{R}(w_\ell)$}
\begin{align}
\label{IPS}
\mathcal{I}_\eps(\mathcal{P},\sigma):=\int_{\R^{Kd/2}}\int_{S_{\cal P,\si}}\left\{\prod_{\ell=1}^{K/2}\frac{\hat{R}(w_\ell)}{(2\pi)^{d}}\right\}e^{i\eps^{-2}\Theta_{\cal P}}
  \Phi_{\cal P} \, d \pmb{\lam}d\mathbf{w} .
\end{align}
%\red{
%\begin{align}
%\label{IPS}
%\mathcal{I}_\eps(\mathcal{P},\sigma):=\int_{\R^{Kd/2}}\int_{S_{\cal P,\si}}\left\{\prod_{((\lambda_1,w_1),(\lambda_2,w_2))\in {\cal P}}\frac{\hat{R}(w_1)}{(2\pi)^{d}}\right\}e^{i\eps^{-2}\Theta_{\cal P}}
%  \Phi_{\cal P} \, d \pmb{\lam}d\mathbf{w} .
%\end{align}
%}

%\textcolor{red}{\bf DOTAD}

\subsection*{Bounds on $\cN_\eps(\bfn,\bfn',\mathbf{t},\mathbf{t}',\pmb{\xi}, \pmb{\xi}')$}

Let $\{W_t\}_{t\geq 0}$ be a standard real-valued Brownian
motion. Define the processes
\begin{equation}\label{e.tildeg}
\begin{aligned}
&H_0(t)\equiv 1,\\
& H_n(t)=R(0)^{\frac{n}{2}}\int_{[0,t]_<^n}dW_{s_n}\ldots dW_{s_1}=\frac{1}{n!}\left({R(0)}{t}\right)^{n/2}h_n\left(\frac{W_t}{\sqrt{t}}\right), \quad n\geq1,
\end{aligned}
\end{equation}
where $h_n(x):=(-1)^ne^{x^2/2}(e^{-x^2/2})^{(n)}$ is the $n$-th degree
Hermite polynomial, cf (3.3.8) of \cite{oksendal}.
%with the convention $\tilde{g}_0\equiv1$. 
\begin{lemma}\label{l.sumbd}
The following estimate holds 
\begin{equation}\label{e.mmbd1}
\left|\cN_\eps(\bfn,\bfn',\mathbf{t},\mathbf{t}',\pmb{\xi}, \pmb{\xi}')\right| \leq \|\hat{\phi}_0\|_{L^\infty(\R^d)}^{4}\E\left[\prod_{j=1}^{N_1} H_{n_j}(t_j) \prod_{j'=1}^{N_2}H_{n_{j'}'}(t'_{j'}) \right]
\end{equation}
for all $(\bfn,\bfn',\mathbf{t},\mathbf{t}',\pmb{\xi}, \pmb{\xi}')$.
In  consequence,
\begin{equation}\label{e.mmsum}
\begin{aligned}
&\sum_{\bfn,\bfn'}\left|\E\left[\prod_{j=1}^{N_1}\hat\psi_{n_j,\eps}(t_j,\xi_j) \prod_{j'=1}^{N_2}\hat\psi^\star_{n'_{j'},\eps}(t_{j'}',\xi'_{j'})\right] \right|\leq \|\hat{\phi}_0\|_{L^\infty(\R^d)}^{4}\exp\left\{6R(0)\max_{j,j'}(t_j, t_{j'}')\right\}
\end{aligned}
\end{equation}
for all $(\mathbf{t},\mathbf{t}',\pmb{\xi}, \pmb{\xi}')$.
\end{lemma}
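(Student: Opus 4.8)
The plan is to prove the pointwise bound \eqref{e.mmbd1} first, and then sum over $\bfn,\bfn'$ to obtain \eqref{e.mmsum}. The starting point is the pairing representation \eqref{021805-20a}: after applying the Wick theorem and collapsing the primed variables, $\cN_\eps$ is a sum over ordered pairings $\mathcal{P}\in\Pi$ of integrals whose integrand, in absolute value, is bounded by $\prod_{\ell=1}^{K/2}\frac{\hat R(w_\ell)}{(2\pi)^d}\,|\Phi_{\mathcal P}|$, since $|e^{i\eps^{-2}\Theta_{\mathcal P}}|=1$ — so the oscillatory phase, which is the whole point of the convergence argument, plays no role here and is simply discarded. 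The factor $\Phi_{\mathcal P}$ is a product of $N_1+N_2=4$ values of $\hat\phi_0$ (the initial data evaluated at various momentum shifts), hence $|\Phi_{\mathcal P}|\le\|\hat\phi_0\|_{L^\infty}^4$ uniformly. What remains is to integrate $\prod_\ell \hat R(w_\ell)$ over the momentum variables and over the collapsed simplex $S_{\mathcal P}$ in the time variables.

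The key observation is that integrating $\prod_{\ell=1}^{K/2}\hat R(w_\ell)/(2\pi)^d$ over $\R^{Kd/2}$ gives $\big(R(0)\big)^{K/2}$, since $\int_{\R^d}\hat R(w)\,dw/(2\pi)^d = R(0)$; here $K/2 = (|\bfn|_1+|\bfn'|_1)/2$ is exactly the number of pairs. The remaining time integral is over $S_{\mathcal P}$, which is contained in the product of simplices $[0,t_1]_<^{n_1}\times\cdots\times[0,t'_{N_2}]_<^{n'_{N_2}}$, but with the linear constraints forcing $\lambda = \lambda'$ for paired variables — in effect identifying the $K$ time variables in pairs, leaving $K/2$ free times constrained to lie in the union of simplices. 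The crucial point is that summing over all admissible pairings $\mathcal P$ of the time integral $\int_{S_{\mathcal P}} d\pmb\lambda$, together with the combinatorial factor $(R(0))^{K/2}$, reproduces precisely $\E\big[\prod_j H_{n_j}(t_j)\prod_{j'}H_{n'_{j'}}(t'_{j'})\big]$: indeed, by the multiplication formula for iterated Wiener integrals (or directly from the Wick/It\^o-isometry structure of $H_n(t)=R(0)^{n/2}\int_{[0,t]_<^n}dW_{s_n}\cdots dW_{s_1}$), this Gaussian moment expands exactly as a sum over pairings — with no pair allowed inside a single $H_{n_j}$, matching the restriction in $\Pi$ — of $R(0)^{K/2}$ times the Lebesgue measure of the corresponding constrained simplicial set. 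Matching the two pairing sums term by term yields \eqref{e.mmbd1}.

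For \eqref{e.mmsum} I would sum \eqref{e.mmbd1} over $\bfn\in\N^{N_1}$, $\bfn'\in\N^{N_2}$ and pull the sum inside the expectation (all terms nonnegative, monotone convergence): $\sum_{\bfn,\bfn'}\prod_j H_{n_j}(t_j)\prod_{j'}H_{n'_{j'}}(t'_{j'}) = \prod_{j=1}^{N_1}\big(\sum_{n\ge0}H_n(t_j)\big)\prod_{j'=1}^{N_2}\big(\sum_{n\ge0}H_n(t'_{j'})\big)$. Using the generating-function identity $\sum_{n\ge0}H_n(t) = \sum_{n\ge0}\frac{1}{n!}(R(0)t)^{n/2}h_n(W_t/\sqrt t) = \exp\{\sqrt{R(0)}\,W_t - \tfrac12 R(0)t\}$ (the standard Hermite generating function evaluated at $x=W_t/\sqrt t$, $\theta=\sqrt{R(0)t}$), each factor is a stochastic exponential. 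With $N_1+N_2=4$ and each $t_j,t'_{j'}\le T:=\max_{j,j'}(t_j,t'_{j'})$, taking the expectation of the product of these four exponentials and bounding crudely (e.g. by Cauchy–Schwarz across the four factors, or by writing the sum of the four Brownian arguments and using $\E e^{\theta W_T}=e^{\theta^2 T/2}$) produces a bound of the form $\exp\{cR(0)T\}$ with an explicit constant; a short computation checks that $c=6$ is admissible (it is not sharp and the precise constant is immaterial for the sequel).

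The main obstacle is the combinatorial identification in the third paragraph above: one must verify carefully that the set of ordered pairings $\Pi$ appearing in the Wick expansion of $\cN_\eps$ is in bijection with the pairings appearing in the chaos expansion of the Gaussian moment $\E[\prod H_{n_j}(t_j)\prod H_{n'_{j'}}(t'_{j'})]$ — including the matching of the "no pair within a single block'' constraint — and that under this bijection the collapsed domain $S_{\mathcal P}$ has Lebesgue measure equal to the corresponding contribution to the Hermite moment. This is where the restriction (built into $\Pi$) that no two variables with the same index $j$ or $j'$ are paired becomes essential: it is exactly the statement that there is no self-contraction within one Wiener-chaos factor, which is what makes the bound an equality of pairing sums rather than merely an inequality. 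Everything else — discarding the phase, bounding $|\Phi_{\mathcal P}|$, the momentum integration, the generating-function sum — is routine.
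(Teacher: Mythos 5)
Your proposal is correct and follows essentially the same route as the paper: discard the oscillatory phase, bound $|\Phi_{\mathcal P}|$ by $\|\hat\phi_0\|_{L^\infty(\R^d)}^4$, integrate the momentum variables to produce $R(0)^{K/2}$, and identify $R(0)^{K/2}\sum_{\mathcal P}{\sf m}(S_{\mathcal P})$ with the Gaussian moment of the iterated Wiener integrals $H_{n_j}$ via the Wick formula, then sum the chaos series to the stochastic exponential $\exp\{\sqrt{R(0)}W_t-\tfrac12 R(0)t\}$ and bound the fourth-order product expectation by $\exp\{6R(0)\max_{j,j'}(t_j,t'_{j'})\}$. The "main obstacle" you flag (matching the two pairing sums, including the no-self-contraction constraint) is exactly the step the paper dispatches with the Wick formula, so no gap remains.
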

\begin{proof}
We use expression \eqref{021805-20a} to respresent
$\cN_\eps(\bfn,\bfn',\mathbf{t},\mathbf{t}',\pmb{\xi},
\pmb{\xi}')$. Recall that $N_1+N_2=4$. Thanks to the obvious bounds  $|e^{i\eps^{-2}\Theta_{\mathcal{P}}}|\leq1$ and
$|\Phi_{\mathcal{P}}|\leq \|\hat{\phi}_0\|_{L^\infty(\R^d)}^{4}$,  we get
\[
\begin{aligned}
&\left|\cN_\eps(\bfn,\bfn',\mathbf{t},\mathbf{t}',\pmb{\xi},
\pmb{\xi}')\right| \\
&\leq \|\hat{\phi}_0\|_{L^\infty(\R^d)}^{4}\sum_{{\cal P}\in \Pi}\int_{\R^{Kd/2}}\int_{S_{\cal P}}
\left\{\prod_{\ell=1}^{K/2}\frac{\hat{R}(w_\ell)}{(2\pi)^d}\right\} \, d \pmb{\lam}d\mathbf{w}\\
&=\|\hat{\phi}_0\|_{L^\infty(\R^d)}^4[R(0)]^{K/2}\sum_{{\cal P}\in
  \Pi}{\sf m} (S_{\cal P}).
\end{aligned}
\]
Applying the Wick formula,
we also conclude that 
\[
[R(0)]^{K/2}\sum_{{\cal P}\in
  \Pi }{\sf m}(S_{\cal P})=\E\left[\prod_{j=1}^{N_1} H_{n_j}(t_j) \prod_{j'=1}^{N_2}H_{n_{j'}'}(t'_{j'}) \right].
\]
 As a result \eqref{e.mmbd1} follows.
To prove \eqref{e.mmsum} we use the well-known formula, see
e.g. \cite[formula (1.1), p. 4]{nualart}
\[
\sum_{n=0}^{+\infty}H_n(t)=e^{\sqrt{R(0)}W_t-\frac12R(0)t}=:\mathcal{E}_t,
\]
where the convergence holds both a.s. and in the $L^p$-sense for any $p\in[1,+\infty)$. Thus, 
\[
\begin{aligned}
&\sum_{\bfn,\bfn'}\E\left[\prod_{j=1}^{N_1} H_{n_j}(t_j) \prod_{j'=1}^{N_2}H_{n_{j'}'}(t'_{j'}) \right]=\E\big[\prod_{j=1}^{N_1}\mathcal{E}_{t_j}\prod_{j'=1}^{N_2}\mathcal{E}_{t'_{j'}}\big]\\
&\leq \exp\left\{\tfrac{(N_1+N_2)(N_1+N_2-1)}{2}R(0)\max_{j,j'}(t_j,t'_{j'})\right\}=\exp\left\{6R(0)\max_{j,j'}(t_j,t'_{j'})\right\},%=e^{(2N^2-N)R(0)t},
\end{aligned}\]
which completes the proof.
\end{proof}

\subsubsection{Proof of Lemma~\ref{l.con2ndmm}}
Fix $t>0,j,k=1,\ldots,N$. Recall that 
\[
Q_{\eps,j,k}(t)=\int_0^t\int_{\R^d} \frac{\hat{R}(p)}{(2\pi)^d} e^{i(\eta_j-\eta_k)\cdot ps} \hat\psi_\eps(s,\xi+\eps^2 \eta_j-p) \hat\psi_\eps^*(s,\xi+\eps^2\eta_k-p)dpds.
\]
%As the variables $\xi,\eta_j,\eta_k,t$ are fixed, 
We write its second moment as 
\begin{equation}
\label{022205-20}
\begin{aligned}
\E[|Q_{\eps,j,k}(t)|^2]=\int_{[0,t]^2}\int_{\R^{2d}} \frac{\hat{R}(p)\hat{R}(p')}{(2\pi)^{2d}} e^{i(\eta_j-\eta_k)\cdot (ps-p's')}\M_{4,\eps}(s,s',p,p')dpdp'dsds',
\end{aligned}
\end{equation}
with
\[
\begin{aligned}
\M_{4,\eps}(s,s',p,p')=\E[&\hat\psi_\eps(s,\xi+\eps^2 \eta_j-p) \hat\psi_\eps^*(s,\xi+\eps^2\eta_k-p)\\
\times&\hat\psi_\eps^*(s',\xi+\eps^2 \eta_j-p') \hat\psi_\eps(s',\xi+\eps^2\eta_k-p')].
\end{aligned}
\]
By Corollary~\ref{c.bdpsieps}, we know that 
\[
|\M_{4,\eps}(s,s',p,p')| \leq C(t),
\]
therefore, to study the limit of $\E[|Q_{\eps,j,k}(t)|^2]$, by the
dominated convergence theorem, we only need to analyze the limit of
$\M_{4,\eps}(s,s',p,p')$ as $\eps\to0$, for   a.e. $s,s',p,p'$ in the
respective Lebesgue measure. Define 
\[
\M_{2,\eps}(s,p)=\E[\hat\psi_\eps(s,\xi+\eps^2 \eta_j-p) \hat\psi_\eps^*(s,\xi+\eps^2\eta_k-p)].
\]
Using \eqref{011305-20} with $\hat{\phi}_0(\xi-p)$ in place of
$\hat{\phi}_0(\xi)$, we conclude that 
\[
\begin{aligned}
\lim_{\eps\to0}\M_{2,\eps}(s,p)=|\hat{\phi}_0(\xi-p)|^2
+\sum_{n=1}^{+\infty} \int_{[0,s]_<^n} \int_{\R^{nd}} &\prod_{\ell=1}^n \frac{\hat{R}(p_\ell)e^{i(\eta_j-\eta_k)\cdot p_\ell s_\ell}}{(2\pi)^d}\\
&\times |\hat{\phi}_0(\xi-p-p_1-\ldots-p_n)|^2 d\mathbf{p}_{1,n}d\mathbf{s}_{1,n}
\end{aligned}
\]
for each $s,p$.
%Recall that $\la Y(,\cdot,{\eta_j}),Y^*(\cdot,{\eta_k})\ra_t$ is given in \eqref{e.yjk},
%\[
%\M_{4,\eps}(\mathbf{p},\mathbf{s})-\prod_{n=1}^2\M_{2,\eps}(p_j,s_j)\to0, \quad\quad \mbox{ if } p_1\neq p_2
%\]
%which is implied by the following lemma.
 The conclusion of Lemma \ref{l.con2ndmm}  then
 is a consequence of Lemma \ref{l.con1stmm} and 
the following. 
\begin{lemma}\label{l.com4n2}
For any $s,s',p,p'$ such that $p\neq p'$, we have 
\begin{equation}
\label{032005-20}
\M_{4,\eps}(s,s',p,p')-\M_{2,\eps}(s,p)\M_{2,\eps}^*(s',p')\to0, \quad \mbox{ as } \eps\to0.
\end{equation}
\end{lemma}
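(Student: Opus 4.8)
The plan is to pass to the Wiener chaos expansion of $\hat\psi_\eps$ and to split off the ``disconnected'' contractions, which will reproduce $\M_{2,\eps}(s,p)\M_{2,\eps}^*(s',p')$ exactly; then \eqref{032005-20} becomes the statement that the sum of all the remaining, ``connected'', diagrams tends to $0$, and this is where the hypothesis $p\ne p'$ enters.

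First I would expand the four factors of $\M_{4,\eps}$ by \eqref{e.defgn}, writing $\M_{4,\eps}=\sum_{\bfn,\bfn'}\cN_\eps$ with $N_1=N_2=2$: the two non-conjugated slots carry $(s,\xi+\eps^2\eta_j-p)$ and $(s',\xi+\eps^2\eta_k-p')$, the two conjugated ones carry $(s,\xi+\eps^2\eta_k-p)$ and $(s',\xi+\eps^2\eta_j-p')$. Grouping the first non-conjugated and first conjugated slot into the ``$p$-block'' and the other two into the ``$p'$-block'', the absence of internal pairings inside any single $\hat\psi_{n,\eps}$ together with the multiplicativity of Wick contractions shows that the sum of all pairings $\mathcal P$ that do not pair across the two blocks equals exactly $\M_{2,\eps}(s,p)\M_{2,\eps}^*(s',p')$. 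Hence $\M_{4,\eps}-\M_{2,\eps}(s,p)\M_{2,\eps}^*(s',p')=\sum_{\bfn,\bfn'}\cN_\eps^{\,\mathrm{conn}}$, where $\cN_\eps^{\,\mathrm{conn}}$ keeps only the $\mathcal P$ having at least one ``crossing'' edge between the two blocks. Since $\hat R\ge0$, the proof of Lemma~\ref{l.sumbd} bounds $\sum_{\bfn,\bfn'}|\cN_\eps^{\,\mathrm{conn}}|$ by $\|\hat\phi_0\|_{L^\infty(\R^d)}^4 e^{6R(0)\max(s,s')}$ uniformly in $\eps$, so by dominated convergence over the counting measure in $(\bfn,\bfn')$ it is enough to show $\cN_\eps^{\,\mathrm{conn}}(\bfn,\bfn')\to0$ for each fixed $(\bfn,\bfn')$; this being a finite sum of terms $\mathcal I_\eps(\mathcal P,\sigma)$ of the form \eqref{IPS}, it suffices to prove $\mathcal I_\eps(\mathcal P,\sigma)\to0$ for every connected $\mathcal P$ and every ordering $\sigma$.

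For this last point I would exploit that $\Theta_{\mathcal P}$ is \emph{linear} in the contracted time variables $\lambda_1,\dots,\lambda_{K/2}$, say $\Theta_{\mathcal P}=\sum_\ell c_{\ell,\eps}(\mathbf w)\lambda_\ell$. Fix a crossing edge $e$ of $\mathcal P$, with contracted time variable $\lambda_e$ and momentum $w_e$; its coefficient $c_{e,\eps}(\mathbf w)$ is the sum of the two appearances of $\lambda_e$ in the phases \eqref{e.defphase} of the two factors that $e$ joins, and a direct computation gives two cases. If $e$ joins two slots of the same conjugation type, then $c_{e,\eps}(\mathbf w)\to a(\mathbf w)\cdot w_e-|w_e|^2$ with $a$ affine in $\mathbf w$; if $e$ joins a non-conjugated and a conjugated slot, the quadratic term and the $\eps^2\eta$-shifts both cancel and $c_{e,\eps}(\mathbf w)\to\big((p'-p)+b(\mathbf w_{\ne e})\big)\cdot w_e$ with $b$ linear. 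In both cases the limit $c_{e,0}(\mathbf w)$ is a polynomial in $\mathbf w$ which, as soon as $p\ne p'$, is not identically zero---in the first case because of the $-|w_e|^2$ term, in the second because setting $\mathbf w_{\ne e}=0$ leaves $(p'-p)\cdot w_e\not\equiv0$---so $c_{e,0}(\mathbf w)\ne0$ for almost every $\mathbf w$. I would then integrate $\mathcal I_\eps(\mathcal P,\sigma)$ first in $\lambda_e$, over the interval cut out of the convex set $S_{\mathcal P,\sigma}$ by fixing the remaining time variables, using $\big|\int_a^b e^{i\eps^{-2}c_{e,\eps}(\mathbf w)\lambda}\,d\lambda\big|\le\min\!\big(2\eps^2/|c_{e,\eps}(\mathbf w)|,\ \max(s,s')\big)$. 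After this and the trivial bound on the remaining time integrals, the integrand is dominated by $\|\hat\phi_0\|_{L^\infty(\R^d)}^4(\max(s,s'))^{K/2}\prod_{\ell}\frac{\hat R(w_\ell)}{(2\pi)^d}$, which is integrable in $\mathbf w$, whereas for almost every $\mathbf w$ the locally uniform convergence $c_{e,\eps}\to c_{e,0}\ne0$ gives $\eps^2/|c_{e,\eps}(\mathbf w)|\to0$; dominated convergence then yields $\mathcal I_\eps(\mathcal P,\sigma)\to0$, hence \eqref{032005-20}.

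The hard part is the third step: showing that a \emph{connected} pairing always contains a crossing edge whose time coefficient $c_{e,0}$ is a non-trivial polynomial in the momenta, and correctly locating the role of $p\ne p'$ (needed only for crossing edges joining slots of opposite conjugation type). This requires carrying the phase \eqref{e.defphase} through an arbitrary contraction---equivalently, tracking the momentum trajectories of the four wave packets---and it is exactly here that the $\delta$-correlation in time helps: each time variable survives as a single linear coefficient, so the oscillatory estimate reduces to the elementary one-dimensional bound above, with no smearing of the phase as in the smoothly correlated case.
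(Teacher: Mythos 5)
Your proposal is correct, and it follows the paper's overall skeleton (chaos expansion, Wick pairing, the uniform bound of Lemma~\ref{l.sumbd} to justify term-by-term limits, disconnected contractions reproducing $\M_{2,\eps}(s,p)\M_{2,\eps}^*(s',p')$, oscillatory-integral vanishing of the rest), but it organizes the crucial step differently. The paper does not classify crossing edges by conjugation type; instead it integrates the time variables in decreasing order, starting from the global maximum $\lambda_1\in\{s_{1,1},s_{2,1},s'_{1,1},s'_{2,1}\}$. Because $\lambda_1$ is the top variable of its chain, its phase coefficient carries no partial momentum sums, so a wrong pairing of $\lambda_1$ produces the exact coefficient $(\xi_1-\xi_2')\cdot w_1=(p'-p)\cdot w_1$ (or its analogues), and the ordering constraint forces the partner of $\lambda_1$ to be the top variable of its own chain on pain of ${\sf m}(S_{\cP,\sigma})=0$; an induction down the chains then shows that any surviving pairing must be the ladder. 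Your route avoids the induction: you pick an arbitrary crossing edge $e$ and show directly that the coefficient $c_{e,\eps}(\mathbf w)$ of $\lambda_e$ in $\Theta_{\cP}$ --- which is indeed just the sum of the two slot coefficients, since each original time variable occurs in exactly one chain's phase --- is a.e.\ nonzero in $\mathbf w$: for opposite-conjugation crossings the quadratic terms cancel and the $(p'-p)\cdot w_e$ term survives (this is where $p\neq p'$ is used), while for same-conjugation crossings the $-|w_e|^2$ term makes the coefficient nontrivial unconditionally. Both computations check out against \eqref{e.defphase} and \eqref{012005-20}, and the one-dimensional bound $\min(2\eps^2/|c_{e,\eps}|,\,b-a)$ followed by dominated convergence in $\mathbf w$ is legitimate since $c_{e,\eps}$ does not depend on the time variables. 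What your version buys is robustness and a cleaner logical structure (no need to propagate the argument down the chain, and no need to separately discuss which non-ladder within-block pairings have null domains --- they are absorbed exactly into the product of second moments); what the paper's version buys is that the relevant phase coefficient is free of partial momentum sums, so its nonvanishing is immediate rather than an ``a.e.\ nonzero polynomial'' argument. Either way the conclusion \eqref{032005-20} follows.
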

%\textcolor{red}{\bf DOTAD}

\begin{proof}
We let
\begin{equation}
\label{012005-20}
\begin{aligned}
&t_1=t_1'=s,\quad t_2=t_2'=s', \\
&\xi_1=\xi+\eps^2\eta_j-p, \quad \xi_2=\xi+\eps^2\eta_k-p', \quad \xi'_1=\xi+\eps^2\eta_k-p,\quad \xi'_2=\xi+\eps^2\eta_j-p',
\end{aligned}
\end{equation}
in the diagram expansion \eqref{021805-20} and \eqref{021805-20b}.
Thus,
\begin{equation}
\label{022005-20}
\begin{aligned}
\M_{4,\eps}(s,s',p,p')=&\E[\hat\psi_\eps(t_1,\xi_1) \hat\psi_\eps(t_2,\xi_2) \hat\psi_\eps^*(t_1',\xi_1')\hat\psi_\eps^*(t_2',\xi_2')]\\
=&\sum_{\bfn,\bfn'}\cN_\eps(\bfn,\bfn',\mathbf{t},\mathbf{t}',\pmb{\xi},\pmb{\xi}').
\end{aligned}
\end{equation}
Here
$\cN_\eps(\bfn,\bfn',\mathbf{t},\mathbf{t}',\pmb{\xi},\pmb{\xi}')$
is given by \eqref{011805-20a}:
\[
\cN_\eps(\bfn,\bfn',\mathbf{t},\mathbf{t}',\pmb{\xi},\pmb{\xi}')=\E\left[\prod_{j=1}^{N_1}\hat\psi_{n_j,\eps}(t_j,\xi_j) \prod_{j'=1}^{N_2} \hat\psi_{n'_{j'},\eps}^*(t'_{j'},\xi'_{j'})\right].
\]
with $N_1=N_2=2$ and
$\mathbf{t},\mathbf{t}',\pmb{\xi},\pmb{\xi}'$ determined from
\eqref{012005-20}.

By Lemma~\ref{l.sumbd}, while computing the limit of
$\M_{4,\eps}(s,s',p,p')$, as $\eps\to0$, we can enter with the limit
under the series in the right hand side of \eqref{022005-20}. So
the question reduces to the computation of the limits of
each 
$\cN_\eps(\bfn,\bfn',\mathbf{t},\mathbf{t}',\pmb{\xi},\pmb{\xi}')$.

%  Since $p\neq p'$,  
% we have $\lim_{\eps\to0}(\xi_j-\xi_{j'}),j=1,2$, all other pairs satisfy $\lim_{\eps\to0} |a-b|=|p-p'|\neq 0$.
 
% On the other hand, we note that  
We have
\[
\M_{2,\eps}(s,p)\M_{2,\eps}^*(s',p')=\prod_{j=1}^2\sum_{n=0}^{+\infty}\E[\hat\psi_{n,\eps} (t_j,\xi_j)\hat\psi^*_{n,\eps} (t_j',\xi_j')]],
\]
thus, to complete the proof of \eqref{032005-20}, it suffices to show that
 \begin{equation}
\label{042005-20}
\lim_{\eps\to0}\left\{
  \cN_\eps(\bfn,\bfn',\mathbf{t},\mathbf{t}',\pmb{\xi},\pmb{\xi}')-\delta_{n_1,n_1'}\delta_{n_2,n'_2}\prod_{j=1}^2\E[\hat
  \psi_{n_j,\eps}(t_j,\xi_j) \hat
  \psi_{n_j,\eps}^*(t_j',\xi_j')]\right\}=0,
 \end{equation}
where $\delta_{n,m}$ is the Kronecker symbol.

% Using the diagram expansion and \eqref{021805-20b}, we have $\cM_\eps(\bfn,\bfm,\mathbf{t},\mathbf{s},\pmb{\xi},\pmb{\zeta}) = \sum_{\sigma}\mathcal{I}_\eps(\sigma)$ with 
% \[
% \mathcal{I}_\eps(\sigma):= \int_{\R^{Kd}}\int_{\sigma_{\bfn,\bfm,\mathbf{t},\mathbf{s}}}\left(\prod_{(\lambda_1,\lambda_2)\in \mathcal{P}_\sigma}\frac{\hat{R}(w_1)}{(2\pi)^{d}}\delta(\lambda_1-\lambda_2)\delta(w_1+w_2)\right)\cdot e^{i\eps^{-2}\Theta}\cdot \Phi \, d\mathbf{u}d\mathbf{v}d\mathbf{p}d\mathbf{q}
% \]
Consider the diagram expansion \eqref{021805-20b}  for
$\cN_\eps(\bfn,\bfn',\mathbf{t},\mathbf{t}',\pmb{\xi},\pmb{\xi}')$, which we write as a \emph{finite} sum:
\[
\cN_\eps(\bfn,\bfn',\mathbf{t},\mathbf{t}',\pmb{\xi}, \pmb{\xi}') = \sum_{\mathcal{P}}\sum_{\sigma}\mathcal{I}_\eps(\mathcal{P},\sigma).
\]
Among all the ordered pairings, we distinguish one
special called the {\em
ladder pairing}, defined in the case of $n_j=n_j'$ for
$j=1,2$, as follows
\begin{align*}
{\cal P}_{\rm lad}:=&\Big\{\big((s_{1,1},p_{1,1}),
  (s'_{1,1},p'_{1,1})\big),\ldots, \big((s_{1,n_1},p_{1,n_1}),
  (s'_{1,n_1},p'_{1,n_1})\big),\\
& \big((s_{2,1},p_{2,1}), (s'_{2,1},p'_{2,1})\big),\ldots, \big((s_{2,n_2},p_{2,n_2}), (s'_{2,n_2},p'_{2,n_2})\big)\Big\}.
\end{align*}

% Define $S_{\mathrm{per}}$ as the set of all permutations $\sigma$ such
% that the corresponding pairing $\mathcal{P}_\sigma$ contains only
% $\{(u_{1,j},v_{1,j}),(u_{2,\ell}, v_{2,\ell})\}_{j,\ell}$., so we define $S_{\mathrm{per}}=\emptyset$ if this is not case.

% Consider the case of $S_{\mathrm{per}}\neq \emptyset$, it is straightforward to check that 
% %if $\sigma\in S_{\mathrm{per}}$, the phase factor $\Theta$, given by \eqref{e.Theta}, equals to zero after integrating out the Dirac functions 
% %\[
% %\prod_{(\lambda_1,\lambda_2)\in \mathcal{P}_\sigma}\delta(\lambda_1-\lambda_2)\delta(w_1+w_2),
% %\] and  
% after a summation over all permutations in $S_{\mathrm{per}}$, we
% have (recall that $t_1=s_1,t_2=s_2$)
Using the diagram expansion to represent
 $\E[\hat
  \psi_{n_j,\eps}(t_j,\xi_j) \hat
  \psi_{n_j,\eps}^*(t_j',\xi_j')]$, $j=1,2$ we conclude that, cf \eqref{021805-20b},
\[
\begin{aligned}
\sum_{\sigma}\mathcal{I}_\eps({\cal P}_{\rm lad},\sigma)=      \prod_{j=1}^2\E[\hat
  \psi_{n_j,\eps}(t_j,\xi_j) \hat
  \psi_{n_j,\eps}^*(t_j',\xi_j')].
\end{aligned}
\]
Next we show that
\begin{equation}\label{e.4223}
\lim_{\eps\to0}\mathcal{I}_\eps({\cal P},\sigma)=0,
\quad\quad   \mbox{ if } {\cal P}\neq {\cal P}_{\rm lad}.
\end{equation}
This would end the proof of \eqref{042005-20}, finishing in this way
the proof of Lemma \ref{l.com4n2}.

\subsection*{Proof of (\ref{e.4223})}
Since in our argument only the time components of  the paired elements
$(\lam,w)$ (see the definition of a pairing)
play a role, to simplify the notation, when speaking about
${\cal P}$ we shall refer to the pairing between the $\lam$  (temporal) components only.
 Let us consider a  pairing 
\begin{equation}
\label{022205-20}
{\cal P}\neq {\cal P}_{\rm lad}
\end{equation}
 and
 suppose that 
\begin{equation}\label{e.4223n}
\lim_{\eps\to0}\mathcal{I}_\eps({\cal P},\sigma)\neq 0.
\end{equation} 
All the
 paired $\lam$-s come from the set
of variables 
\[
\begin{aligned}
\{s_{1,1},\ldots,s_{1,n_1},s_{2,1},\ldots,s_{2,n_2},s'_{1,1},\ldots,s'_{1,n_1'},s_{2,1}',\ldots,s_{2,n_2'}'\},
\end{aligned}
\]
and the partition is ${\cal P}=\{(\lam_1, \lam_1'),\ldots, (\lam_{K/2},
\lam_{K/2}')\}$.
%\red{where with a  slightly abused notation, we let $\{\lambda_\ell\}_{\ell=1,\ldots,K/2}$ denote all $\lambda_1-$variables after the ``collapsing''. In the following we will use $\{w_\ell\}_{\ell=1,\ldots,K/2}$ to denote the corresponding $w_1-$variables.
%}
Recall that $\{\lambda_1,\ldots,\lambda_{K/2}\}$  are ordered according to
the pairing ordering, and $\{w_1,\ldots,w_{K/2}\}$ are the corresponding momentum variables. For any permutation $\sigma$, we defined
\[
S_{\mathcal{P},\sigma}=S_{\mathcal{P}}\cap \big\{\lambda_{\sigma(1)}> \lambda_{\sigma(2)}> \ldots> \lambda_{\sigma(\frac{K}{2})}\big\},
\]
and 
\[
\mathcal{I}_\eps(\mathcal{P},\sigma)=\int_{\R^{Kd/2}}\int_{S_{\cal P,\si}}\left\{\prod_{\ell=1}^{K/2}\frac{\hat{R}(w_\ell)}{(2\pi)^{d}}\right\}e^{i\eps^{-2}\Theta_{\cal P}}
  \Phi_{\cal P} \, d \pmb{\lam}d\mathbf{w}.
  \]
As we will integrate $\lambda_{\sigma(1)},\lambda_{\sigma(2)},\ldots$ in order, to ease the notation, we perform a change of variable so that after the change we have $\lambda_1>\ldots>\lambda_{K/2}$. More precisely, we change 
\begin{equation}\label{e.281}
\lambda_\ell,w_\ell\mapsto \lambda_{\sigma^{-1}(\ell)}, w_{\sigma^{-1}(\ell)}, \quad\quad \ell=1,\ldots,\tfrac{K}{2},
\end{equation}
and let
$$
\tilde{\cal S}_{{\cal P},\si}:=\left\{(\lam_1,\ldots,\lam_{K/2}):\,
  (\lam_{\si^{-1}(1)},\ldots,\lam_{\si^{-1}(K/2)})\in {\cal S}_{{\cal P},\si}\right\}.
$$ 
It is clear that  $\lam_1> \ldots >\lam_{K/2}$ for any $(\lam_1,\ldots,\lam_{K/2})\in \tilde{\cal S}_{{\cal P},\si}$.
%For the fixed partition $\mathcal{P}$ and permutation $\sigma$, let 
%\[
%(\lambda_1,\ldots,\lambda_{K/2})=(r_{\sigma(1)},\ldots,r_{\sigma(K/2)}),
%\]
%In other words, inside the set $\tilde{\cal S}_{\mathcal{P},\sigma}$, $\lambda_1$ to be the largest temporal variable, $\lambda_2$ to be the second largest, etc. 
Now we can write $\mathcal{I}_\eps(\mathcal{P},\sigma)$  as 
\begin{equation}\label{e.261}
\mathcal{I}_\eps(\mathcal{P},\sigma)=\int_{\R^{Kd/2}}\int_{\tilde{S}_{\cal P,\si}}\left\{\prod_{\ell=1}^{K/2}\frac{\hat{R}(w_\ell)}{(2\pi)^{d}}\right\}e^{i\eps^{-2}\Theta_{\cal P}}
  \Phi_{\cal P} \, d \pmb{\lam}d\mathbf{w},
  \end{equation}
  where we have changed variables in $\Theta_{\cal P},\Phi_{\cal P}$ according to \eqref{e.281}.
  
  Let us first present a rough sketch of the proof. In the expression \eqref{e.261}, the phase factor $\Theta_{\cal P}$ is a linear combination of $\lambda_1,\ldots,\lambda_{K/2}$. As we will see later, the fact that ${\cal P}\neq {\cal P}_{\rm lad}$ induces a nonzero order $O(1)$ coefficient associated with some $\lambda_\ell$, so we write $\Theta_{\cal P}=\theta_{\mathcal{P}}\lambda_\ell+\tilde{\Theta}_{\mathcal{P}}$ for some $\theta_{\mathcal{P}}\neq 0$. Using the elementary fact that 
  \[
\sup_{a,b\in [0,T]}  \int_a^b e^{i\eps^{-2} \theta_{\cal P} \lambda_\ell} d\lambda_\ell\to0, \quad \mbox{for any $T>0$, provided that $\theta_{\cal P}\neq0$,}
 \]
 we derive that 
 \[
 \int_{\tilde{S}_{\cal P,\si}}e^{i\eps^{-2}\Theta_{\cal P}} d\pmb{\lambda}=\int_{\tilde{S}_{\cal P,\si}} e^{i\eps^{-2}\theta_{\cal P}\lambda_\ell} e^{i\eps^{-2} \tilde{\Theta}_{\cal P}} d\pmb{\lambda}\to0.
 \]
 Since the above integral is uniformly bounded by ${\sf m}(S_{\mathcal{P},\sigma})$, an application of the dominated convergence theorem proves $\mathcal{I}_\eps(\mathcal{P},\sigma)\to0$.

Now let us enter the details of the discussion. Obviously,
$\lam_1\in \{s_{1,1}, s_{2,1}, s'_{1,1}, s_{2,1}'\}$. Suppose that
$\lam_1= s_{1,1}$ (so $w_1=p_{1,1}$), we claim that \eqref{e.4223} holds for $\lam_1'\not=
s_{1,1}'$. Indeed, if the latter holds, 
then either ${\sf m}(\cal S_{{\cal P},\si})=0$, or   we have $\lam_1'\in
\{s_{2,1}, s_{2,1}'\}$. Assume that $\lam_1'=s_{2,1}'$. As we shall
see from the argument below, the case $\lam_1'=s_{2,1}$ can be treated similarly.

%\gucomment{i don't quite get this part. For a given partition $\mathcal{P}$ and permutation $\sigma$, can we just define the variables $\lambda_\ell$ so that $(\lambda_1,\ldots,\lambda_{K/2})=(r_{\sigma(1)},\ldots,r_{\sigma(K/2)})$? In other words, we give those $w_1-$variables different names, depending on each permutation. In this way, we do not need to introduce the definition of $\tilde{\cal S}_{\mathcal{P},\sigma}$}

%\textcolor{red}{\bf DOTAD}

%First, recall that $\Theta=\textstyle\sum_{j=1}^{2}\Theta_{n_j}(\xi_j,\mathbf{p}_j,\mathbf{u}_j)-\textstyle\sum_{\ell=1}^{2}\Theta_{m_\ell}(\eta_\ell,\mathbf{q}_\ell,\mathbf{v}_\ell)$ with the phase factor defined in \eqref{e.defphase}. 
% The idea is that if $\sigma\notin S_{\mathrm{per}}$, then after integrating out the Dirac functions 
% \[
% \prod_{(\lambda_1,\lambda_2)\in \mathcal{P}_\sigma}\delta(\lambda_1-\lambda_2)\delta(w_1+w_2),
% \] the phase factor $\Theta\neq 0$ so we have a large phase which makes the integral in time small.

% The largest element $r_1$ must be in the set $\{u_{1,1},u_{2,1},v_{1,1},v_{2,1}\}$. Without loss of generality, assume $r_1=u_{1,1}$. There are two cases: (i)  $u_{1,1}\nsim v_{1,1}$; (ii) $u_{1,1}\sim v_{1,1}$. In the first case, we know that $u_{1,1}$ must be paired with $u_{2,1}$ or $v_{2,1}$, since $\mathcal{P}_\sigma$ is a ladder pairing.

Since $s_{1,1}$ is paired with $s_{2,1}'$, we have $p_{1,1}$ paired with $p_{2,1}'$, so the associated phase factor $\Theta_{\cal P}$ equals 
\begin{align*}
\Theta_{\cal P}=&\frac{s_{1,1}}{2}(|\xi_1|^2-|\xi_1-p_{1,1}|^2)-\frac{s_{2,1}'}{2}(|\xi_2'|^2-|\xi_2'-p_{2,1}'|^2)+\tilde{\Theta}_{\cal P}\\
=&\frac{ s_{1,1}}{ 2} \Big[(|\xi_1|^2-|\xi_1-p_{1,1}|^2)-(|\xi_2'|^2-|\xi_2'-p_{1,1}|^2)\Big]+\tilde{\Theta}_{\cal P}\\
=&(\xi_1-\xi_2')\cdot p_{1,1} s_{1,1}+\tilde{\Theta}_{\cal P},
\end{align*}
where $\tilde{\Theta}_{\cal P}$ involves the temporal variables
$\lam_2,\ldots,\lam_{K/2}$ and the momentum  variables
$w_1,\ldots,w_{K/2}$ that we do not track. By the definition of $\xi_1,\xi_2'$ in \eqref{012005-20}, we have $\xi_1-\xi_2'=p'-p$, which yields
\begin{equation}
\begin{aligned}
\label{IPS1}
\mathcal{I}_\eps(\mathcal{P},\sigma)=\int_{\R^{Kd/2}}\int_{ S_{\cal P,\si}}&\left\{\prod_{\ell=1}^{K/2}\frac{\hat{R}(w_\ell)}{(2\pi)^{d}}\right\} \exp\left\{i\eps^{-2}(p'-p)\cdot
   w_1 \lam_1\right\} \\
   &\times e^{i\eps^{-2}\tilde \Theta_{\cal P}}
  \Phi_{\cal P} \, d \pmb{\lam} d\mathbf{w}.
\end{aligned}
\end{equation}

Let 
$\tilde{S}_{\cal P,\si}^2:=\pi\big( \tilde{S}_{\cal P,\si}\big)$, where $\pi:\bbR^{K/2}\to \bbR^{K/2 -1}$ is the
coordinate projection:
$$
\pi(\lam_1,
\lam_2,\ldots,\lam_{K/2}):=(\lam_2,\ldots,\lam_{n}),\quad (\lam_1,
\lam_2,\ldots,\lam_{K/2})\in \bbR^{K/2}.
$$
Note that
$$
\tilde{S}_{\cal P,\si}=\left\{(\lam_1,\ldots,\lam_{K/2}):\,
  (\lam_2,\ldots,\lam_{K/2})\in  \tilde{S}_{\cal P,\si}^2,\,
  \lam_1\in(\lam_2,t_1\wedge t_2')\right\}.
$$

Then, we can rewrite \eqref{IPS1} as 
\begin{align}
\label{IPS2}
&
\mathcal{I}_\eps(\mathcal{P},\sigma)=\int_{\R^{Kd/2}}\Phi_{\cal P}  \left\{\prod_{\ell=1}^{K/2}\frac{\hat{R}(w_\ell)}{(2\pi)^{d}}\right\} d\mathbf{w} \int_{  S^2_{\cal P,\si}}e^{i\eps^{-2}\tilde \Theta_{\cal P}} d{\lam}_2\ldots d\lam_{K/2}\notag\\
&
\times \int_{\lam_2}^{t_1\wedge t_2'}\exp\left\{i\eps^{-2}(p'-p)\cdot
  w_1 \lam_1\right\} 
  \, d {\lam}_1 .
\end{align}
Note that both $\Phi_{\mathcal{P}}$ and the integral 
\[
 \int_{  S^2_{\cal P,\si}}e^{i\eps^{-2}\tilde \Theta_{\cal P}} d{\lam}_2\ldots d\lam_{K/2}
 \]
 are bounded. Then we can argue that
$\lim_{\eps\to0}\mathcal{I}_\eps(\mathcal{P},\sigma)=0$, by the
dominated convergence theorem and the following  fact
\[
\lim_{\eps\to0}\sup_{a,b\in[0, t_1\wedge t_2']}\left|\int_a^b \exp\left\{i\eps^{-2}(p'-p)\cdot
  w_1 \lam_1\right\} d\lam_1\right|=0
\]
that holds for all  $w_1$ such that $(p'-p)\cdot
w_1\not=0$. This is where we rely on the assumption of $p'\neq p$.
Hence, in order for \eqref{e.4223n} to be true we need to pair
$s_{1,1}$ with $s'_{1,1}$.

Next, we note that $\lam_2\in
\{s_{1,2},s'_{1,2},s_{2,1},s'_{2,1}\}$. We argue, in exactly the same
fashion as in the case of $\lam_1$ that if  $\lam_2=s_{i,j}$ and
\eqref{e.4223n} is in force, then 
\begin{equation}
\label{012205-201}
\mbox{$s_{i,j}$ is paired with
$s_{i,j}'$}.
\end{equation}
 The above argument holds also for other choices of $\lam_2$, which 
 propagates down to all other $\lam_\ell$. Finally, we conclude that
\eqref{e.4223n} forces condition \eqref{012205-201} for all
$i=1,2$ and $j=1,\ldots,n_i$. But the latter means that ${\cal
  P}$ is the ladder diagram, which stands in a contradiction to
our assumption \eqref{022205-20}. This ends the proof of
\eqref{e.4223}, which in turn finishes the proof of Lemma \ref{l.com4n2}.
%As we have already observed this also concludes the proof of Lemma
%\ref{l.con2ndmm}.
%\qed 
\end{proof}

%\textcolor{red}{\bf DOTAD}

\subsubsection{Convergence of $\mathscr{Q}_{\eps,j,k}(t)$}
Recall that $\mathscr{Q}_{\eps,j,k}(t)$ is given by \eqref{sqjk}.
% \[
% \begin{aligned}
% \quad \mathscr{Q}_{\eps,j,k}(t) =&\la M_{\eps,\eta_j},M_{\eps,\eta_k}\ra(t)\\
% =&\int_0^t\int_{\R^d} \frac{\hat{R}(p)}{(2\pi)^d} e^{i(\eta_j-\eta_k-\frac{p}{\eps^2})\cdot p s} \psi_\eps(s,\xi+\eps^2 \eta_j-p)\psi_\eps(s,\xi+\eps^2\eta_k+p)dpds.
% \end{aligned}
% \]
The limit in \eqref{e.262a} is a consequence of the following lemma.
\begin{lemma}
For any $t>0$, $j,k=1,\ldots,N$, we have
\begin{equation}
\label{012205-20}
\begin{aligned}
\lim_{\eps\to0}\E[|\mathscr{Q}_{\eps,j,k}(t)|^2]=0.
\end{aligned}
\end{equation}
\end{lemma}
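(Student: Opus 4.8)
The plan is to compute $\E[|\mathscr{Q}_{\eps,j,k}(t)|^2]$ and show it vanishes by exploiting the extra oscillatory factor $e^{-i(|p|^2s-|p'|^2s')/\eps^2}$ that is present in $\mathscr{Q}_{\eps,j,k}$, see \eqref{sqjk}, but was absent in $Q_{\eps,j,k}$. First I would use \eqref{sqjk} and \eqref{gjk} to write
\[
\E[|\mathscr{Q}_{\eps,j,k}(t)|^2]=\int_{[0,t]^2}\int_{\R^{2d}}\frac{\hat{R}(p)\hat{R}(p')}{(2\pi)^{2d}}\,e^{i(\eta_j-\eta_k)\cdot(ps-p's')}\,e^{-i(|p|^2s-|p'|^2s')/\eps^2}\,\widetilde{\M}_{4,\eps}(s,s',p,p')\,dp\,dp'\,ds\,ds',
\]
where $\widetilde{\M}_{4,\eps}(s,s',p,p')=\E[\hat\psi_\eps(s,\xi-p+\eps^2\eta_j)\hat\psi_\eps(s,\xi+p+\eps^2\eta_k)\hat\psi^*_\eps(s',\xi-p'+\eps^2\eta_j)\hat\psi^*_\eps(s',\xi+p'+\eps^2\eta_k)]$. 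Then I would feed $\widetilde{\M}_{4,\eps}$ into the diagram expansion of the preceding subsections, i.e.\ apply it with $N_1=N_2=2$, $t_1=t_2=s$, $t_1'=t_2'=s'$ and $\pmb{\xi},\pmb{\xi}'$ read off from the four arguments above, so that $\widetilde{\M}_{4,\eps}=\sum_{\bfn,\bfn'}\cN_\eps(\bfn,\bfn',\mathbf{t},\mathbf{t}',\pmb{\xi},\pmb{\xi}')$ with each $\cN_\eps$ decomposed as in \eqref{021805-20b}. By Lemma~\ref{l.sumbd} the series over $(\bfn,\bfn')$ converges absolutely, uniformly in $(s,s',p,p')\in[0,t]^2\times\R^{2d}$ and in $\eps\in(0,1]$; hence, by dominated convergence, it suffices to prove that the contribution of each individual diagram $(\bfn,\bfn',\mathcal{P},\sigma)$ to $\E[|\mathscr{Q}_{\eps,j,k}(t)|^2]$ tends to $0$.

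The key algebraic observation is that $\Theta_{\mathcal{P}}$ is an \emph{affine} function of $p$ (and, symmetrically, of $p'$): in $\widetilde{\M}_{4,\eps}$ the variable $p$ enters only through the arguments $\xi-p+\eps^2\eta_j$ and $\xi+p+\eps^2\eta_k$, and by the telescoping structure of \eqref{e.defphase} one has $2\Theta_n(\zeta,\mathbf{p},\mathbf{s})=2\zeta\cdot\sum_{\ell=1}^n p_\ell s_\ell+(\text{terms independent of }\zeta)$, so every $\Theta_n$ is affine in $\zeta$, a property untouched by the collapse of the primed momenta. Consequently, for a fixed diagram and fixed values of $s,s',p'$, of the ordered temporal variables $\pmb{\lam}$ and of the momenta $\mathbf{w}$, the diagram's contribution contains the inner integral
\[
\int_{\R^d}\hat{R}(p)\,\hat{\phi}_0(\zeta_1-p)\,\hat{\phi}_0(\zeta_2+p)\,e^{ib_\eps\cdot p/\eps^2}\,e^{-is|p|^2/\eps^2}\,dp ,
\]
where $\zeta_1,\zeta_2$ collect the $p$-independent shifts and $b_\eps$ (the $p$-linear part of $\Theta_{\mathcal{P}}$, plus an $O(\eps^2)$ correction coming from $e^{i(\eta_j-\eta_k)\cdot ps}$) is bounded. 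Completing the square in $p$ turns this into $e^{i(\cdots)/\eps^2}\int_{\R^d}g_\eps(p)\,e^{-is|p-c_\eps|^2/\eps^2}\,dp$ with $g_\eps=\hat R\cdot(\text{two }\hat\phi_0\text{ factors})$ satisfying $\|g_\eps\|_{L^1(\R^d)}\le\|\hat R\|_{L^1(\R^d)}\|\hat\phi_0\|_{L^\infty(\R^d)}^2$ and $c_\eps$ bounded for each fixed $s>0$; invoking the standard Fresnel--Riemann--Lebesgue fact that $\int_{\R^d}g(p)\,e^{-i\lambda|p|^2}\,dp\to0$ as $\lambda\to+\infty$ for $g\in L^1(\R^d)$, this inner integral tends to $0$ for every $s>0$ while staying bounded by $\|\hat R\|_{L^1(\R^d)}\|\hat\phi_0\|_{L^\infty(\R^d)}^2$. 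A second dominated-convergence passage in the remaining variables $(s,s',p',\pmb{\lam},\mathbf{w})$ -- the integrand being controlled by the Schwartz decay of $\hat R$, the $L^\infty$ bound on $\hat\phi_0$, and the estimate of Lemma~\ref{l.sumbd}, which also guarantees summability over the diagrams -- then shows that each diagram's contribution vanishes, hence $\E[|\mathscr{Q}_{\eps,j,k}(t)|^2]\to0$.

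The point that makes this lemma \emph{easier} than Lemma~\ref{l.com4n2} is that here \emph{no} diagram survives the limit, not even the ladder one, because the chirp $e^{-is|p|^2/\eps^2}$ is genuinely oscillatory in $p$ uniformly over all diagrams, so the delicate case-by-case analysis of the temporal orderings used in the proof of \eqref{e.4223} is not needed. The main thing requiring genuine care will be the verification of the affine dependence of $\Theta_{\mathcal{P}}$ on $p$ and $p'$, together with the routine but somewhat lengthy bookkeeping needed to justify the two nested dominated-convergence arguments and to check that the relevant bounds sum to a finite quantity via Lemma~\ref{l.sumbd}.
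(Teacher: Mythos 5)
Your argument is correct in substance, but it exploits the extra oscillation in a genuinely different way from the paper. Both proofs pass through the same diagram expansion of $\tilde{\M}_{4,\eps}$ and reduce, via Lemma~\ref{l.sumbd} and dominated convergence, to showing that each individual diagram's contribution vanishes; the difference lies in which variable is played off against the chirp $e^{-i\eps^{-2}(|p|^2s-|p'|^2s')}$. The paper integrates first over the \emph{outermost temporal variable}: for fixed $p\neq0$ the elementary bound $\bigl|\int_a^b e^{-i\eps^{-2}|p|^2s}\,ds\bigr|\les\eps^2|p|^{-2}$ kills every diagram, at the price of the bookkeeping needed to identify the integration range $[\lambda_{\sigma(1)}\vee s',t]$ of $s$, which is simultaneously an external variable of $\mathscr{Q}_{\eps,j,k}$ and the upper endpoint of the time simplices. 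You instead integrate first over the \emph{momentum} $p$; your key observation that $\Theta_{\mathcal{P}}$ is affine in $p$ is correct (the telescoping in \eqref{e.defphase} makes each $\Theta_n(\zeta,\cdot,\cdot)$ affine in $\zeta$, and $p$ enters only through $\xi_1,\xi_2$, never as an internal chaos momentum), so after completing the square the inner integral is a Fresnel integral of an $L^1$ density and vanishes for every $s>0$. This buys you freedom from the temporal bookkeeping, and it makes transparent why no diagram, ladder included, survives. The one step you should flesh out is the Fresnel--Riemann--Lebesgue limit itself: the density $g_\eps$ and the center $c_\eps$ depend on $\eps$ and on the frozen variables $(\mathbf{w},s,\ldots)$, so the fixed-$g$ statement you quote does not apply verbatim. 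For fixed outer variables this is repaired by noting that $g_\eps\to g_0$ in $L^1(\R^d)$ (its $\eps$-dependence is only through the shifts $\eps^2\eta_j,\eps^2\eta_k$ and $\hat\phi_0$ is bounded continuous) and by using the center-uniform bound $\sup_c\bigl|\int h(q)e^{-i\lambda|q-c|^2}dq\bigr|\le C\lambda^{-d/2}\|\hat h\|_{L^1}$ for Schwartz $h$ together with $L^1$-density. With that (routine) addition, and the dominating function $\|\hat R\|_{L^1}\|\hat\phi_0\|_{L^\infty}^4|\hat R(p')|\prod_\ell|\hat R(w_\ell)|$ for the outer dominated-convergence passage, your proof is complete and is a legitimate alternative to the one in the paper.
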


\begin{proof}
Recall that
\begin{equation*}
\begin{aligned}
\mathscr{Q}_{\eps,j,k}(t) =&\la {\cal M}_{\eps}(\cdot,\eta_j), {\cal M}_{\eps}(\cdot,\eta_k)\ra(t)\\
=&-\int_0^t\int_{\R^d} \frac{\hat{R}(p)}{(2\pi)^d} \exp\left\{i{\left(\eta_j-\eta_k-\frac{p}{\eps^2}\right)}\cdot p s\right\} g_\eps^{j,k}(s,\xi,p) dpds,
\end{aligned}
\end{equation*}
with $g_\eps^{j,k}$ defined in \eqref{gjk}. We have
\begin{equation}\label{e.291}
\E[|\mathscr{Q}_{\eps,j,k}(t)|^2]=\int_{[0,t]^2}\int_{\R^{2d}}\frac{\hat{R}(p)\hat{R}(p')}{(2\pi)^{2d}}e^{i\eps^{-2}\theta_\eps(s,s',p,p')}\tilde{\M}_{4,\eps}(s,s',p,p')dpdp'dsds',
\end{equation}
with
\[
\theta_\eps(s,s',p,p')=\eps^2(\eta_j-\eta_k)\cdot (ps-p's')-(|p|^2s-|p'|^2s')
\]
and
\[
\begin{aligned}
\tilde{\M}_{4,\eps}(s,s',p,p')=\E[&\hat\psi_\eps(s,\xi+\eps^2 \eta_j-p) \hat\psi_\eps(s,\xi+\eps^2\eta_k+p)\\
\times&\hat\psi_\eps^*(s',\xi+\eps^2 \eta_j-p') \hat\psi_\eps^*(s',\xi+\eps^2\eta_k+p')].
\end{aligned}
\]
By Corollary~\ref{c.bdpsieps}, we have
\[
\sup_{s,s'\in[0,t],p,p'\in\R^d}|\tilde{\M}_{4,\eps}(s,s',p,p')| \leq C(t).
\] 
%thus, we can estimate $\E[|\mathscr{Q}_{\eps,j,k}(t)|^2]$ as 
%\[
%\E[|\mathscr{Q}_{\eps,j,k}(t)|^2] \les\int_{\R^{2d}} \hat{R}(p)\hat{R}(p')\sup_{r_1\in[0,t]}\left|\int_{r_1}^te^{2i\eps^{-2}[\eps^2(\eta_j-\eta_k)\cdot p-|p|^2]s} dsdpdp'
%\]
The integral in $dsds'$ involves  a large phase factor
$\eps^{-2}\theta_\eps$, which should be  contrasted with our
calculation in case of
$Q_{\eps,j,k}(t)$, see \eqref{qjk}, where this situation happened only
for the non-ladder pairings. The presence of such a factor explains why  the expression in the
left hand side of  \eqref{e.291}  vanishes, as $\eps\to0$. 
To prove this fact rigorously, we write $\tilde{\M}_{4,\eps}(s,s',p,p')$ in terms of
the diagram expansion and proceed, as in Section \ref{sec3.4.2},
integrating first the largest temporal variables in $s,s'$,
analogous to what has been done in \eqref{IPS2}. As the argument is very similar, we do not provide all details but only the sketch.

First, we have
$$
\tilde{\M}_{4,\eps}(s,s',p,p')=\sum_{\bfn, \bfn'}\tilde{\cN}_\eps(\bfn,\bfn',\mathbf{t},\mathbf{t}',\pmb{\xi}, \pmb{\xi}') ,
$$
where, cf \eqref{021805-20b}, $\mathbf{t},\mathbf{t}',\pmb{\xi},
\pmb{\xi}'$ are given by 
\begin{equation}
\label{012005-20c}
\begin{aligned}
&t_1=t_2=s,\quad t_1'=t_2'=s', \\
&\xi_1=\xi+\eps^2\eta_j-p, \quad \xi_2=\xi+\eps^2\eta_k+p, \quad \xi'_1=\xi+\eps^2\eta_j-p',\quad \xi'_2=\xi+\eps^2\eta_k+p',
\end{aligned}
\end{equation}
and
\begin{align}
\label{021805-20c}
&\tilde{\cN}_\eps(\bfn,\bfn',\mathbf{t},\mathbf{t}',\pmb{\xi},
  \pmb{\xi}') :=
  \sum_{\mathcal{P}}\sum_{\sigma}\int_{\R^{Kd/2}}\int_{S_{\cal
  P,\si}}\left\{\prod_{\ell=1}^{K/2}\frac{\hat{R}(w_\ell)}{(2\pi)^{d}}\right\}e^{i\eps^{-2}\hat{\Theta}_{\cal P}}
  \hat{\Phi}_{\cal P} \, d \pmb{\lam}d\mathbf{w} ,
\end{align} 
where $\hat{\Theta}_{\cal P}$ is some, appropriately defined phase factor, 
involving only the variables $\pmb{\lam}$ and $\mathbf{w}$, and
$\hat{\Phi}_{\cal P}$ is the expression corresponding to the product
of the initial data. Here
we use the same  notation for variables, pairing ${\cal P}$,
permutation $\si$ and the domain of integration as in Section   \ref{sec3.4.2}.
We can write then
\[
\lim_{\eps\to0}\E[|\mathscr{Q}_{\eps,j,k}(t)|^2]=\sum_{\bfn,
  \bfn'}\sum_{\mathcal{P},\si}\lim_{\eps\to0}\mathcal{J}_\eps(\mathcal{P},\sigma),
\]
where
\begin{align*}
\mathcal{J}_\eps(\mathcal{P},\sigma):=\int_{{\cal T}_{{\cal
  P},\si}} dsds'  d \pmb{\lam}&\int_{\R^{2d+{Kd/2}}}d\mathbf{w} dpdp'
  \\
&
\times e^{i\eps^{-2}\theta_\eps(s,s',p,p')}\frac{\hat{R}(p)\hat{R}(p')}{(2\pi)^{2d}}
\prod_{\ell=1}^{K/2}\frac{\hat{R}(w_\ell)}{(2\pi)^{d}}e^{i\eps^{-2}\hat{\Theta}_{\cal P}}
  \hat{\Phi}_{\cal P} 
\end{align*}
and 
$$
{\cal T}_{{\cal
  P},\si}:=\left\{(s,s', \pmb{\lam}):\, (s,s')\in[0,t]^2,\, \pmb{\lam}\in S_{{\cal
  P},\si}\right\}.
$$
We emphasize that $S_{{\cal
  P},\si}$ in fact depends on $s$ and $s'$, through the dependence of
$\tilde{\cN}_\eps$ on $\mathbf{t},\mathbf{t}'$. Without loss of
generality, consider the region of $s>s'$. Given the partition
$\mathcal{P}$ and the permutation $\sigma$, the largest
$\pmb{\lambda}$ variable is $\lambda_{\sigma(1)}$, thus for fixed
$s'$ and $\pmb{\lambda}$, the domain of integration for $s$ is
$[\lambda_{\sigma(1)}\vee s',t]$. Using the fact that 
  \[
  \sup_{s',\lambda_{\sigma(1)}\in [0,t]} \left|\int_{\lambda_{\sigma(1)}\vee s'}^t \exp\left\{i (\eta_j-\eta_k)\cdot ps-\eps^{-2}
  |p|^2s\right\}ds\right|\to0, \quad\quad \mbox{ for $p\neq 0$},
  \]
  and applying dominated convergence theorem, we conclude the proof of $\mathcal{J}_\eps(\mathcal{P},\sigma)\to0$ as $\eps\to0$.  
%  Let $\tilde{\cal T}_{{\cal
%  P},\si}$ be the projection of ${\cal T}_{{\cal
%  P},\si}$ onto the $\pmb{\lam}$ coordinates. Then, 
%\begin{align*}
%&\mathcal{J}_\eps(\mathcal{P},\sigma)=\int_{\tilde{\cal T}_{{\cal
%  P},\si}}  d \pmb{\lam}\int_{\R^{2d+{Kd/2}}}\frac{\hat{R}(p)\hat{R}(p')}{(2\pi)^{2d}}
%\prod_{\ell=1}^{K/2}\frac{\hat{R}(w_\ell)}{(2\pi)^{d}}d\mathbf{w} dpdp'
%  \\
%&
%\times e^{i\eps^{-2}\hat{\Theta}_{\cal P}}
%  \hat{\Phi}_{\cal P} \times \left(\int_{[\lam_1,t]^2}\exp\left\{i (\eta_j-\eta_k)\cdot (ps-p's')-\eps^{-2}
%  (|p|^2s-|p'|^2s')\right\}ds ds'\right).
%\end{align*}
%It is easy to verify that for any $(p,p')$ and $(s,s')$ such that
%$|p|^2s-|p'|^2s'\neq 0$ and $0<a<b$ we have
%\[
%\begin{aligned}
%&\lim_{\eps\to0}\int_{[a,b]^2}\exp\left\{i (\eta_j-\eta_k)\cdot (ps-p's')-\eps^{-2}
%  (|p|^2s-|p'|^2s')\right\}ds ds'=0.
%\end{aligned}
%\]
%Therefore, by virtue of the   dominated convergence theorem, $\lim_{\eps\to0}\mathcal{J}_\eps(\mathcal{P},\sigma)=0$. 
%Thus, the proof of \eqref{012205-20} is complete.
\end{proof}

\section{Proofs of the results from Section \ref{sec011109-20}}

\label{s.c}

We show only how to prove Proposition~\ref{c.c1a}. The other results
from Section  \ref{sec011109-20} can be argued similarly.

\subsection{Proof of  (\ref{021109-20ab})}

Thanks to the moment estimate proved in
Corollary \ref{c.bdpsieps} we conclude that 
\begin{equation}
\label{c.bdpsieps-1}
\sup_{t\in[0,T],\xi,\eta\in\R^d,\eps\in(0,1]}\E[|X_\xi^{\eps}(t,\eta)|^{2n}]<+\infty,
\end{equation}
for any integer $n\ge0$ and $T>0$.
Therefore, it suffices only to
show (\ref{021109-20ab}) for $J(x,\xi)$ whose Fourier transform in
the $x-$variable - $\hat J(\eta,\xi)$ - is compactly supported.

Fix $\xi\in\bbR^d$ and $t\ge0$. Suppose also that $\eps_n\to0+$. Thanks to the Skorokhod embedding
theorem, see e.g.  Theorem I.6.7 of \cite{billingsley} and Theorem \ref{t.mainth}, we can assume
that there exist
 a sequence of the fields  $\left(\tilde
   X_\xi^{\eps_n}(t,\cdot)\right)_{n\ge1}$ and a field $\tilde
   X_\xi(t,\cdot)$  such that
\begin{itemize}
\item[i)] the law of  $\tilde
   X_\xi^{\eps_n}(t,\cdot)$ coincides with that of $
   X_\xi^{\eps_n}(t,\cdot)$ for each $n\ge1$. Likewise the laws of  $\tilde
   X_\xi(t,\cdot)$  and that of  $
   X_\xi(t,\cdot)$  are equal,
\item[ii)] 
$\tilde
   X_\xi^{\eps_n}(t,\cdot)$ converge  to $\tilde X_\xi(t,\cdot)$, as $n\to+\infty$, uniformly on compact
   subsets of $\bbR^d$, for a.s. realization of the fields.
\end{itemize}
In consequence, the law of $\int_{\bbR^{d}}{\cal
  X}_\eps(t,x,\xi)J^*(x,\xi)dx$
coincides with that  of  
\begin{equation}
\label{021109-20abc}
\frac{1}{(2\pi)^d}\int_{\bbR^{d}}\tilde
   X_\xi^{\eps_n}(t,\eta) \hat J^*(\eta,\xi)d\eta.
\end{equation}
It follows from ii) that the expressions in \eqref{021109-20abc}
converge a.s. (thus also in law), as $n\to+\infty$, to
\begin{equation}
\label{021109-20abc}
\frac{1}{(2\pi)^d}\int_{\bbR^{d}}\tilde
   X_\xi(t,\eta) \hat J^*(\eta,\xi)d\eta.
\end{equation}
Thus,  (\ref{021109-20ab}) follows.

\subsection{Proof of (\ref{021109-20a})}

Thanks to Theorem \ref{t.mainth} and estimate \eqref{c.bdpsieps-1} we
infer that
\begin{equation}
\label{021109-20abd}
\lim_{\eps\to0}\frac{1}{(2\pi)^d}\bbE\left[\int_{\bbR^{2d}}
   X_\xi^{\eps}(t,\eta) \hat J^*(\eta,\xi)d\eta d\xi\right]=\frac{1}{(2\pi)^d}\int_{\bbR^{2d}}\bbE
   X_\xi(t,\eta) \hat J^*(\eta,\xi)d\eta d\xi.
\end{equation}
The expression in the right hand side equals to the right hand side of
\eqref{021109-20a}, by virtue of \eqref{e.limiteq}.

Using Remark \ref{r.multi} we can also easily conclude that
\begin{equation}
\label{021109-20abd1}
\lim_{\eps\to0}\bbE\left[\int_{\bbR^{2d}}
   X_\xi^{\eps}(t,\eta) \hat J^*(\eta,\xi)d\eta d\xi\right]^2=\left[\int_{\bbR^{2d}}\bbE   X_\xi(t,\eta) \hat J^*(\eta,\xi)d\eta d\xi\right]^2,
\end{equation}
thus (\ref{021109-20a}) follows.


\begin{thebibliography}{10}

\bibitem{bkr} G. Bal,  T. Komorowski, L. Ryzhik,  {\em Asymptotics of the phase of the solution of a random Schr\"odinger equation},  Archiv. Rat. Mech. Anal. {\bf 200},  pp. 613-664 (2011).

\bibitem{bkr1} G. Bal,  T. Komorowski, L. Ryzhik,  {\em Kinetic limits for waves in a random medium},  Kinet. Relat. Models {\bf 3}, pp. 529-644 (2010).

\bibitem{bil}  P. Billingsley, {\em Convergence of probability
    measures}. John Wiley \& Sons, Inc., New York-London-Sydney 1968

\bibitem{billingsley} P. Billingsley,  
 {\em Probability and measure.}
 Second edition. Wiley Series in Probability and Mathematical Statistics: Probability and Mathematical Statistics. John Wiley \& Sons, Inc., New York, 1986. xiv+622 pp. ISBN: 0-471-80478-9 
 


\bibitem{dg} D. Dawson and G. Papanicolaou, {\em A random wave process}, Appl. Math. Optim. {\bf12}, pp. 97-114 (1984).


\bibitem{fgps} J.-P. Fouque, J. Garnier, G. Papanicolaou, and K. S\o
  lna, {\em Wave Propagation and Time Reversal in Randomly Layered
    Media}, Springer, New York, 2007.

\bibitem{garnier} J. Garnier, {\em Multiscale analysis of wave propagation in random media}, Proceedings ICM 2018, {\bf 3}, pp. 2865-2890.


\bibitem{GaS} J. Garnier,  K. S\o lna, {\em Coupled paraxial wave equations in random media in the white-noise regime.} Ann. Appl. Probab. 19 (2009), no. 1, 318–346.

\bibitem{gs1} J. Garnier, K. S\o lna, {\em Fourth-moment analysis for beam propagation in the white-noise paraxial regime}, Archiv. Rat. Mech. Anal. {\bf 220},  pp. 37–81 (2016).

\bibitem{gs2} J. Garnier, K. S\o lna, {\em Scintillation in the
    white-noise paraxial regime}, Comm. Part. Differ. Equat. {\bf 39},
  pp. 626-650 (2014).

%\bibitem{pastur}  Pastur, L., Figotin, A.
%{\em Spectra of random and almost-periodic operators.}
%Grundlehren der Mathematischen Wissenschaften, 297. Springer-Verlag, Berlin, 1992. 

\bibitem{jacod}  J. Jacob, A. N. Shiryaev, {\em  Limit theorems for
    stochastic processes.} Second edition. Grundlehren der
  Mathematischen Wissenschaften, 288. Springer-Verlag, Berlin, 2003.


\bibitem{janson}  S. Jason, {\em Gaussian Hilbert spaces.} Cambridge Tracts in Mathematics, 129. Cambridge University Press, Cambridge, 1997.

\bibitem{kunita}  H. Kunita, {\em Stochastic flows and stochastic
    differential equations}. Cambridge Studies in Advanced
  Mathematics, 24. Cambridge University Press, Cambridge, 1990.

\bibitem{lions-paul} Lions, P.L.; Paul, T. {\em Sur les mesures de Wigner}. (French) [On Wigner measures] Rev. Mat. Iberoamericana 9 (1993), no. 3, 553–618.

\bibitem{nualart} D. Nualart,
{\em The Malliavin calculus and related topics.}
Second edition. Probability and its Applications (New York). Springer-Verlag, Berlin, 2006.

\bibitem{oksendal}  B. \O ksendal,
{\em Stochastic differential equations.}
An introduction with applications. Sixth edition. Universitext. Springer-Verlag, Berlin, 2003. 

\bibitem{prs} G. Papanicolaou, L. Ryzhik, K. S\o lna, {\em
    Self-averaging from lateral diversity in the Itô-Schr\"odinger
    equation}. Multiscale Model. Simul. 6 (2007), no. 2, 468–492.

\bibitem{strob} J. W. Strohbehn. {\em Laser Beam Propagation in the
    Atmosphere.} Springer-Verlag, Berlin, 1978.
    
\bibitem{valley}    G. C. Valley and D. L. Knepp. {\em Application of joint Gaussian statistics to interplanetary
scintillation.} J. Geophys. Res. {\bf81} (1976), 4723–4730.


\bibitem{yaku} I. G. Yakushkin. {\em Moments of field propagating in randomly inhomogeneous medium in the
limit of saturated fluctuations.} Radiophys. Quantum Electron. {\bf21} (1978), 835–840.


\end{thebibliography}
\end{document}